\newtheorem{mytheo}{Theorem}[section]
\newtheorem{cor}[mytheo]{Corollary}
\newtheorem{lem}[mytheo]{Lemma}
\newtheorem{prop}[mytheo]{Proposition}
\newtheorem{defn}[mytheo]{Definition}
\newtheorem{rem}[mytheo]{Remark}
\newtheorem{eg}[mytheo]{Example}
\begin{document}
\title[The geometric realization of regular path complexes via (co)homology]
{The geometric realization of regular path complexes\\ via (co)homology }
\author{Fang Li $\;\;\;\;\;\;$ Bin Yu$^*$ $\;\;\;\;\;\;$}
\address{Fang Li
\newline School of Mathematical Sciences,
Zhejiang  University, Hangzhou 310027, P.R.China}
\email{fangli@zju.edu.cn}
\address{Bin Yu
\newline Department of Mathematics,
China Jiliang University, Hangzhou 310018, P.R.China}
\email{binyu@cjlu.edu.cn}
\date{\today}
\maketitle

\renewcommand{\thefootnote}{\alph{footnote}}
\setcounter{footnote}{-1} \footnote{\emph{ Mathematics Subject
Classification(2010)}: 05C20; 55U10; 55U25; 13D03; 16D40; }
\renewcommand{\thefootnote}{\alph{footnote}}
\setcounter{footnote}{-1} \footnote{ \emph{Keywords}: directed graphs; path complexes; path (co)homology; simplicial (co)homology; K\"{u}nneth formula; Hochschild (co)homology.}
\setcounter{footnote}{-1} \footnote{*\emph{Corresponding author} }

\begin{abstract}

The aim of this paper is to give the geometric realization of regular path complexes via (co)homology groups with coefficients in a ring $R$.
Concretely, for each regular path complex $P$, we associate it with a singular $\Delta$-complex $S(P)$ and show that the (co)homology
groups of $P$ are isomorphic to those of $S(P)$ with  coefficients in $R$. As a direct result we recognize path (co)homology as Hochschild (co)homology
in case that $R$ is commutative and $P$ regular finite. Analogues of the Eilenberg-Zilber theorem and K\"{u}nneth formula are also showed for the Cartesian product
and the join of two regular path complexes. In fact, we meanwhile improve some previous results which are covered by these conclusions in this paper.
\end{abstract}

\tableofcontents

\section{Introduction}

\noindent S.-T.Yau et al. (\cite{Lin2}) introduced the concept of quantum tunneling on graphs. As a preparation of their work, Yau and his
collaborators had previously studied the eigenfunction defined for graphs (\cite{Cheng,Lin1}), which is a generalization of the version for
smooth Riemannian manifolds. Using this, one can study the nodes of a graph and to control the volume growth of a graph via estimate
of some variables. It turns out to have a great amount of practical applications. Such theories  of graphs are developed parallel to that of
Riemannian geometry in many ways.

When considering digraphs (i.e., quivers), we can define their (co)homology groups in a similar way as we have done in topology (see \cite{Grig1}).
These (co)homology groups can be used directly to study the relationship among some aspects of digraphs and their related theories.
In this manner, it is possible to give a geometric interpretation of eigenfunction and quantum tunneling on digraphs once we obtain the
geometric realization of digraphs given via the (co)homology groups of the associated path complexes (please see Definition 2.2). This is the motivation of our work.

In this paper, we introduce and study the path (co)homology of a path complex with coefficients in a unital ring $R$. The central
idea is based on a topological approach. To be short, given any regular path complex $P$, the geometric realization $S(P)$ of $P$ as a
singular $\Delta$-complex (see Definition \ref{sdelta}) is obtained from a map $F_\Delta$ defined by an inductive process. Then for any unital ring $R$,
one can recognize path (co)homology of $P$ as simplicial (or equivalently cellular) (co)homology of $S(P)$ with coefficients in $R$. Concretely, we have the following:
\\
\\
{\bf{Main Theorem}} (Theorems \ref{thm3.10} and \ref{thm3.11})\;
{\em Let $P$ be a regular path complex and $R$ be a ring, the map $F_\Delta$ induces isomorphisms $H_\ast(P)\cong H_\ast(S(P))$ and $H^\ast(P)\cong H^\ast(S(P))$.}
\\
\\
Historically, in order to study the topological structure of directed graphs (digraphs) and further to classify them, there are many attempts
to form a homological theory for digraphs. Among these approaches there are three of them being well-known: regarding a digraph as a special
one-dimensional simplicial complex, considering all the cliques of a digraph as simplices of the corresponding dimensions (\cite{Chen,Ivan}),
or taking Hochschild cohomology of the path algebra of a digraph (\cite{Happ}). But as it is commented in \cite{Grig1} that all these
approaches have their emphasises and limitations. In view of this, the authors of \cite{Grig1} introduced the notions of path complexes and
path homology over a field (while their cohomology version can be found in \cite{Grig5}). This new (co)homology theory for digraphs, including
its sequel notions and results, not only shares many properties with the above approaches but also avoid many limitations. It is shown that one
can use path homology to give a refined classification of digraphs via some homological invariants such as the dimensions of the homology
groups, Euler characteristic and so on. On the other hand, as we can see that from \cite{Grig3}, path cohomology theory is a powerful tool when
one deals with the algebraic aspect of simplicial cohomology, in fact it allows a delicate proof to the isomorphism obtained in \cite{Gers} without
using Cohomology Comparison Theorem.

Furthermore, the classic Eilenberg-Zilber theorem and K\"{u}nneth formula holding for the Cartesian product space of two topological spaces
have also analogues in the theory of path homology. In fact, similar results hold for both of the Cartesian product and the join of two path
complexes over a field (\cite{Grig1}).

Meanwhile, as we usually do in the theory of simplicial (co)homology, it seems that there is no need to confine the coefficients of path
(co)homology in a field, since different coefficient rings usually induce different (co)homology groups. For instance, when one ignores the
orientation of a simplicial complex one should consider directly its simplicial homology groups with coefficients in $\mathbb{Z}_2$ instead of
$\mathbb{Z}$.

So in this paper, we define and study the path (co)homology  with coefficients in a general ring $R$. Our main goal is to
recognize the path (co)homology as a geometric (co)homology theory. This would provide a new approach to study graph theory and give
many of the results in \cite{Grig1, Grig2, Grig3} an intuitive interpretation. To be short, this is done by the geometric realization of path
complexes as we mentioned at the beginning, the motivation is to view a path complex as a simplicial set. Moreover, given a finite path complex $P$
over a commutative ring $R$, by this result and the results of \cite{Gers, Grig3} one may also construct an associative algebra $A_{S(P)}$ over
$R$ associated to $P$, which is very different from the related path algebra even for $P$ arising from a digraph, and by what we have in hand
it is not surprising that one could prove that the Hochschild (co)homology of $A_{S(P)}$ is isomorphic to path (co)homology of $P$.

The paper is organized as follows. We set off after reviewing some definitions and notations in Section 2. In Section 3, we establish the
correspondence of regular path complexes and singular $\Delta$-complexes which share the isomorphic (co)homology groups with
coefficients in general rings (see Theorems \ref{thm3.10} and \ref{thm3.11}). This recognizes path (co)homology as simplicial (co)homology,
and as we have pointed out that, it allows us to simplify proofs of many results in \cite{Grig1} and \cite{Grig2}. As a first application, in Section 4,
we further recognize path (co)homology with coefficients in a commutative ring as Hochschild (co)homology (see Theorems 4.1 and 4.3) for regular finite path complexes.
Section 5 is dedicating to further applications of the obtained results. Analogues of the Eilenberg-Zilber theorem are obtained in a unified way for the
Cartesian product and the join of two regular path complexes (Theorems 5.8 and 5.11) over any commutative rings. These imply respectively two
general K\"{u}nneth formulae for path homology with coefficients in principle ideal domains. Note that not only this generalizes the previous
result in \cite{Grig1} obtained for path complexes over a field $K$, but also our proofs here go rather different from those given in
\cite{Grig1}.

\section{Preliminaries}
\noindent Throughout this paper, $K$ denotes a field and $R$ denotes a unital ring if not specified.
We recall from \cite{Grig1} some notations and definitions in this section, though most of them are
defined temporarily in the case where $K$ is a field, as we shall see that they can be easily extended
to the case when one replaces $K$ by a unital ring $R$.

\subsection{Path complexes}

\begin{defn}
\emph{Let $V$ be an arbitrary non-empty finite set whose elements will be called \emph{vertices}.
For any non-negative integer $p$, an \emph{elementary $p$-path} on a set $V$ is any ordered
sequence $\{i_{k}\}^{p}_{k=0}$ (or simply written as $i_{0}\cdots i_{p}$) of $p+1$ vertices
(needs not be distinct) of $V$.
 Furthermore, an elementary path $i_{0}\cdots i_{p}$ is said to
be \emph{non-regular} if $i_{k-1}=i_{k}$ for some $1\leq k\leq p$, and \emph{regular} otherwise.}
\end{defn}

Denote by $\Lambda_{p}=\Lambda_{p}(V; K)$ the $K$-linear space that consists of all formal linear
combinations of all elementary $p$-paths with the coefficients from $K$. The elements of $\Lambda_{p}$
are called \emph{$p$-paths} on $V$, and an elementary $p$-path $i_{0}\cdots i_{p}$ as an element
in $\Lambda_{p}$ is written as $e_{i_{0}\cdots i_{p}}$. Obviously the basis in $\Lambda_{p}$ is the
family of all elementary $p$-paths, and each element $v$ in $\Lambda_{p}$ has the following form:
$$v=\sum_{i_{0},\cdots,i_{p}\in V}v^{i_{0}\cdots i_{p}}e_{i_{0}\cdots i_{p}}$$
where $v^{i_{0}\cdots i_{p}}\in K$. For any $p\geq -1$, consider the subspace of $\Lambda_{p}$
spanned by the regular elementary paths: $\mathcal{R}_{p}=\mathcal{R}_{p}(V; K):=$span$\{e_{i_{0}\cdots i_{p}}$:
$i_{0}\cdots i_{p}$ is regular\}, whose elements are called \emph{regular $p$-paths}.

For any $p\geq0$, define the boundary operator $\partial: \Lambda_{p}\rightarrow\Lambda_{p-1}$
as a linear operator that acts on elementary paths by
\[
\partial e_{i_{0}\cdots i_{p}}=\sum_{q=0}^{p}(-1)^{q}e_{i_{0}\cdots \widehat{i_{q}}\cdots i_{p}} \tag{2.1}\label{pdif}
\]
where the hat $\widehat{i_{q}}$ means omission of the index $i_{q}$. Note that such boundary
operators make $\Lambda_{\ast}=\{\Lambda_{p}\}$ a chain complex (see \cite[Lemma 2.4]{Grig1}).
Similarly we can define the regular complex $\mathcal{R}_{\ast}=\{\mathcal{R}_{p}\}$ consisting
of regular elements and with natural boundary operators, i.e., those boundary operators are defined
by the induced maps of $\partial$ acting on the quotient space $\Lambda_{p}$ over non-regular
paths, and it is easy to check that $\mathcal{R}_{\ast}=\{\mathcal{R}_{p}\}$ is a chain complex
under such boundary operators (see \cite{Grig1} for details). Let $V$, $V^{\prime}$ be two finite sets, by
definition, any map $f: V\rightarrow V^{\prime}$ gives rise to two natural morphisms
$\Lambda_{\ast}(V)\rightarrow\Lambda_{\ast}(V^{\prime})$ and $\mathcal{R}_{\ast}(V)\rightarrow\mathcal{R}_{\ast}(V^{\prime})$.

The central concept in our study is the following.

\begin{defn}\label{pathcomplex}
\emph{A \emph{path complex} over a finite set $V$ is a non-empty collection $P$ of elementary paths
on $V$ with the following property: for any $n\geq0$, if $i_{0}\cdots i_{n}\in P$ then also
the truncated paths $i_{0}\cdots i_{n-1}$ and $i_{1}\cdots i_{n}$ belong to $P$. The elementary
$n$-paths from $P$ is denoted by $P_{n}$. If all the paths in $P$ are regular, then $P$ is
called a \emph{regular} path complex. $P$ is called \emph{finite} if  $P_{\geq m}$ are all empty for some $m>0$.}
\par
\end{defn}

Here is an example of path complex:
\begin{eg} \label{Exam}
\emph{Let $V=\{0, 1, ..., 8\}$, and $P$ be a path complex in which the elementary paths are give by:\\
\indent $0$-paths: $0, 1, ..., 8$\\
\indent $1$-paths: $01, 02, 03, 04, 05, 07, 08, 12, 34, 35, 45, 67, 68, 78$\\
\indent $2$-paths: $012, 034, 035, 045, 345, 078, 678$\\
\indent $3$-paths: $0345$.\\
In fact, given a digraph $G$, there is a natural way to associate it
with a path complex $P(G)$ whose vertices and elementary paths are decided by the
digraph in the obvious way. As one can easily check that, the associated path complex of the
following digraph is exactly $P$ (for more details please see \cite[Example 3.3]{Grig1})}.
\end{eg}
\[
\begin{tikzpicture}[scale=1.5,decoration={
    markings,
    mark=at position 0.5 with {\arrow{latex}}}
    ]
     \node[circle,fill=black,inner sep=1pt] at (0,0)(0){} node at +(-90:0.2){0};
     \node[circle,fill=black,inner sep=1pt] at (-1,-0.7)(1){} node[shift=(-135:0.2)] at (1) {1};
     \node[circle,fill=black,inner sep=1pt] at (0.5,-0.7)(2){} node[shift=(-45:0.2)] at (2) {2};
     \node[circle,fill=black,inner sep=1pt] at (140:1.15)(3){} node[shift=(-90:0.2)] at (3) {3};
     \node[circle,fill=black,inner sep=1pt] at (170:2)(4){} node[shift=(180:0.2)] at (4) {4};
     \node[circle,fill=black,inner sep=1pt] at (110:2)(5){} node[shift=(90:0.2)] at (5) {5};
     \node[circle,fill=black,inner sep=1pt] at (0.5,0.5)(6){} node[shift=(180:0.2)] at (6) {6};
     \node[circle,fill=black,inner sep=1pt] at (1,0)(7){} node[shift=(0:0.2)] at (7) {7};
     \node[circle,fill=black,inner sep=1pt] at (0.5,1.5)(8){} node[shift=(90:0.2)] at (8) {8};
    \draw[postaction={decorate}] (0)--(1);
    \draw[postaction={decorate}] (1)--(2);
    \draw[postaction={decorate}] (0)--(2);
    \draw[postaction={decorate}] (0)--(3);
    \draw[postaction={decorate}] (0)--(4);
    \draw[postaction={decorate}] (3)--(4);
    \draw[postaction={decorate}] (3)--(5);
    \draw[postaction={decorate}] (4)--(5);
    \draw[postaction={decorate}] (0)--(5);
    \draw[postaction={decorate}] (0)--(7);
    \draw[postaction={decorate}] (0)--(8);
    \draw[postaction={decorate}] (6)--(8);
    \draw[postaction={decorate}] (6)--(7);
    \draw[postaction={decorate}] (7)--(8);
\end{tikzpicture}
\]

\centerline{Figure 1:\ \ \ \ A digraph $G$ with $P(G)=P$.}

\subsection{Path (co)homology}
\indent

\noindent When a path complex $P$ is fixed, all the $n$-paths of the form $\sum_{j=1}^{s}r_{j}e_{\mathbf{i}^{(j)}}$ with $s$
a finite integer, each $r_{j}\in K$ and $\mathbf{i}^{(j)}=i^{(j)}_0i^{(j)}_1\cdots i^{(j)}_n$ such that $e_{\mathbf{i}^{(j)}}\in P_{n}$ are called \emph{allowed}, otherwise are called \emph{non-allowed}. The set of all
allowed $n$-paths is denoted as $\mathcal{A}_{n}(P)=\mathcal{A}_{n}(P; K)$. Furthermore, for any $n\geq0$ we
define $\Omega_{n}(P)$ as follows:
\[
\Omega_{n}(P)=\Omega_{n}(P; K):=\{p_{n}|p_{n}\in\mathcal{A}_{n}(P)\ \  \mbox{and}\ \  \partial(p_{n})\in\mathcal{A}_{n-1}(P).\}.
\]
Apparently each $\Omega_{n}(P)$ is a $K$-module, namely a vector space over $K$. It is easy to verify that $\partial(\Omega_m(P))\subseteq\Omega_{m-1}(P)$
and $\partial^2=0$, thus we obtain a chain complex of $K$-modules:
\[
\Omega_\ast(P)=\Omega_\ast(P; K):=\cdots\rightarrow\Omega_n(P)\rightarrow\Omega_{n-1}(P)\rightarrow\cdots\rightarrow\Omega_0(P)\rightarrow0.
\]
Therefore, for any $n\geq 0$ we define the $n$-th \emph{path homology group} of $P$ as $H_n(\Omega_\ast(P))$, or denoted shortly by $H_n(P)$.
If the path complex $P$ is regular, which is the case we shall study in this paper, all the above definitions and notations have modified
versions when one replaces the boundary operator by the modified boundary operator which is used to define $\mathcal{R}_\ast$.

The above definitions and notations also have dual versions. For any integer $p\geq -1$, denote by $\Lambda^{p}=\Lambda^{p}(V; K)$ the linear
space of all $K$-valued functions on $(p+1)$-multiplicative product $V^{p+1}$ of set $V$. Otherwise we set $\Lambda^{\leq -2}=\{0\}$. In
particular, $\Lambda^0$ is the linear space of all $K$-valued functions on $V$, and $\Lambda^{-1}$ is the space of all $K$-values functions on
$\Lambda^{0}:=\{0\}$, that is, one can identify $\Lambda^{-1}$ with $K$. The elements of $\Lambda^{p}$ are called \emph{$p$-forms} on $V$, one
can identify $\Lambda^{p}$ with the dual space of $\Lambda_{p}$ via the canonical identity $\Lambda^{p}\cong\mbox{Hom}_{K}(\Lambda_{p}, K)$.
The boundary operator (\ref{pdif}) should be replaced now by exterior differential $d: \Lambda^{p}\rightarrow\Lambda^{p+1}$ given by
\[
(d\omega)_{i_{0}\cdots i_{p+1}}=\sum_{q=0}^{p+1}(-1)^{q}\omega_{i_{0}\cdots \widehat{i_{q}}\cdots i_{p+1}} \tag{2.2}\label{fdif}
\]
for any $\omega\in\Lambda^{p}$. Similarly we define the space of \emph{regular} $p$-forms
$\mathcal{R}^{p}=\mathcal{R}^{p}(V):=\mbox{Hom}_{K}(\mathcal{R}_{p}, K)$ (hereafter this means, any element in $\mathcal{R}^{p}$ always takes
$\Lambda_p\setminus\mathcal{R}_p$, i.e., non-regular $p$-paths to 0). Given a path complex $P$, we define the space of \emph{allowed} $p$-forms
$\mathcal{A}^{p}(P)=\mathcal{A}^{p}(P; K):=\mbox{Hom}_{K}(\mathcal{A}_{p}(P), K)$, also denote $$\mathcal{N}^{p}=\Lambda^p\setminus\mathcal{A}^{p}(P)\ \ \  \mbox{and}\ \ \ \mathcal{J}^{p}=\mathcal{N}^{p}+d\mathcal{N}^{p-1},$$
and define
\[
\Omega^p(P)=\mathcal{A}^p/(\mathcal{A}^p\cap\mathcal{J}^p).
\]
Actually, it follows from \cite[Lemma 3.19]{Grig1} that $\Omega^p(P)$ is the dual
space of $\Omega_p(P)$ while $d$ is dual to $\partial$, that is to say,
one has
\[
\Omega^p(P)\cong\mbox{Hom}_K(\Omega_p(P), K)\ \ \ \mbox{and}\ \ \ d\cong\mbox{Hom}_K(\partial, K) \tag{2.3}\label{fome}.
\]
It can be shown that $\{\Omega^p(P)\}$ amounts to a cochain complex with the
differential operator given by (\ref{fdif}), whereas the $n$-th \emph{path cohomology group}
of $P$ for any $n\geq 0$ is referred to the $n$-th cohomology group
$H^n(\Omega^\ast(P))$ of this cochain complex, which is denoted shortly by $H^n(P)$.

Hitherto, all the definitions are defined only for the case
where $K$ is a field. But it is not hard to see that all of them can be
easily carried over to the case when replacing $K$ by any ring $R$. To do this
there is no need to change a word but replacing all $K$-vector spaces
($K$-modules) by $R$-modules. In this paper, we shall focus on this more
general situation, that is, path (co)homology is understood to be with
coefficient in an associative unital ring $R$, and we shall omit ``$R$" in the
notation since there is no ambiguity.

\section{The geometric realization of regular path complexes}
\noindent In the rest of this paper, all path complexes are understood to be {\em regular} unless specified otherwise, and we shall give the geometric
realization of regular path complexes in this section, i.e., we will construct the correspondence of regular path complexes and
$\Delta$-complexes (see Definition 3.1 below).

Suppose we are given a path complex $P=\{P_{n}\}$ over a finite set $V=\{i_{j}\}$. The aim of this section is to construct a singular $\Delta$-complex
$S(P)$ (see below for definition) with desired homological property.
To do this we plan to associate each proper elementary $n$-path with an $n$-cell
$\Delta^{n}$ whose distinguished characteristic
map $\Delta^{n}\rightarrow S(P)$ will be given, and this process is demonstrated in several steps as follows.

\subsection{Construction of the singular $\Delta$-complexes}
\indent

\noindent Recall that a standard $n$-simplex $\Delta^{n}$ is an
$n$-dimensional convex polyhedron in $\mathbb{R}^{n}$ containing $n$
points which are the $n$ standard basis vectors for $\mathbb{R}^{n}$.
We enumerate the $n+1$ vertices in order, say, $0, 1, \cdots, n$, and give each
edge with two vertices $k,l$ $(0\leq k<l\leq n)$ the direction from $k$ to $l$.
Similarly for any $k$-faces of $\Delta^{n}$, we assign to it a standard orientation in a
well-known way (see also, for example, \cite[p.233]{Hatc}). That is, any other
ordering of the vertices obtained from an even permutation of the
original ordering ($0<1<\cdots<n$) is viewed as the same orientation of
$\Delta^{n}$, which is said to be a \emph{canonical orientation} of
$\Delta^{n}$, otherwise we say that the ordering of vertices gives an opposite orientation to the canonical one.
The same definition can be easily extended to any $\Delta$-complexes by characteristic maps, whose definition is given as follows:

\begin{defn}[\cite{Hatc}]
\emph{
A $\Delta$-complex structure on a space $X$ is a collection of maps $\sigma_\alpha: \Delta^{n}\rightarrow X$, with $n$ depending on the index $\alpha$, such that:\\
{\rm(i)} The restriction $\sigma_\alpha$ on the interior of $\Delta^{n}$ is injective, and each point of $X$ is in the image of exactly one such restriction of $\sigma_\alpha$.\\
{\rm(ii)} Each restriction of $\sigma_\alpha$ to a face of $\Delta^{n}$ is one of the maps $\sigma_\beta: \Delta^{n-1}\rightarrow X$. Here we are identifying the face
of $\Delta^{n}$ with $\Delta^{n-1}$ by the canonical linear homeomorphism between them that preserves the ordering of the vertices.\\
{\rm(iii)} A set $A\subset X$ is open iff $\sigma_\alpha^{-1}(A)$ is open in $\Delta^{n}$ for each $\sigma_\alpha$.
}
\end{defn}

Historically, the above definition was first introduced by Eilenberg and Zilber under the name ``semi-simplicial complexes" as a compromise between simplicial sets
and  simplicial complexes.  A $\Delta$-complex can be also viewed as a CW complex $X$ in which each $n$-cell $e_\alpha^n$ is provided
with a distinguished characteristic map $\delta_\alpha : \Delta^n\rightarrow X$ (which is a homeomorphism from the interior of   $\Delta^n$ onto $X$)
such that the restriction of $\delta_\alpha$ to each face  $\Delta^{n-1}$ of $\Delta^{n}$ is the distinguished $\delta_\beta$ for some $(n-1)$-cell $e_\beta^{n-1}$ .

But in view of that the restriction $\sigma_\alpha$ to a face of $\Delta^{n}$ dose not allow any degeneracy
for $\Delta$-complexes,  for our purpose, a more generalized conception is needed here, which can also be found in \cite{Hatc}.

\begin{defn}\label{sdelta}
\emph{
A \emph{singular $\Delta$-complex,} or \emph{s$\Delta$-complex,} is a CW complex $X$ with distinguished characteristic maps $\delta_\alpha : \Delta^n\rightarrow X$
whose restrictions to faces are the compositions $\delta_\beta q: \Delta^{n-1}\rightarrow\Delta^k\rightarrow X$ for $q$ a linear surjection taking vertices to vertices,
preserving order. Simplicial maps between s$\Delta$-complexes are defined just as for $\Delta$-complexes.
}
\end{defn}

For convenient we shall use simplicial maps instead of cellular maps to specify the attachments all the time (as that we have done
in the above definition), and we do not distinguish
between the simplicial (co)homology and cellular (co)homology of a s$\Delta$-complex in the following sections since they are canonically isomorphic. Note
that one can also take products of s$\Delta$-complexes by the same subdivision procedure as for $\Delta$-complexes, we shall
use this fact in Section 5. To avoid ambiguity, we introduce the following definition.

\begin{defn}
\emph{An $n$-simplex with ordered vertices which give out a canonical orientation is said to be an \emph{ordered} $n$-simplex, and a s$\Delta$-complex is said to be \emph{ordered}
if it consists only of the images of orderded simplices together with the collection of maps preserving the ordering of vertices.}
\end{defn}

In the sequel, we assume that all $n$-simplices (hence s$\Delta$-complexes) are ordered if there is no special statement. With
all these preparation in hand, next we shall bring out the
construction of the s$\Delta$-complexes as the geometric realization of given path complexes in two steps, and to do this one just need to give
all the distinguished characteristic maps.

Suppose now $V=\{i_j\}$ is a finite set, and $P$ is a regular path complex defined over $V$. The following definition is crucial in our construction.

\begin{defn}\label{value}
\emph{
The set $V$ gives rise to a simplicial set $X(V)$ whose set of $n$-simplices are given by all elementary $n$-paths on $V$ in the following manner.
Let $n\geq 1$ be an integer and $[n]=\{0, 1,\cdots, n\}$ be an ordered vertices set. Then each elementary $n$-path $e_n=i_0i_1\cdots i_n$ on the set $V$ induces a map $X_{e_n}$ that sends $[n]$ to $\{i_0, i_1, \cdots, i_n\}\in X_n=V^{n+1}$ by preserving orders, each such map associates any nondecreasing map $f: [m]\rightarrow[n]$ (particularly the face maps $\varepsilon_j$ and degeneracy maps $\eta_j$) with a map $X_{e_n}(f): X_n\rightarrow X_m$ given by the composition $X_{e_n}f$.
Thus each $e_n$ defines
an $n$-simplex $X_{e_n}$, and all these $n$-simplices amount to the desired simplicial set $X(V)$.
}
\end{defn}

Now we need to take the geometrical realization $|X(V)|$ of $X(V)$ for the first step, and for simplicity, we denote it by $S(V)$. It is known that $S(V)$ could be obtained as an s$\Delta$-complex. But in view of clearness, here we shall give a complete discription of $S(V)$ as follows, and it is convenient to introduce the following definition before this.

\begin{defn}\label{reduce}
\emph{
Let $e_n=i_0\cdots i_n$ be an elementary $n$-path over $V$ for some integer $n>1$. If $e_n$ is {\em non-regular}, that is, there
exists at least a pair of two neighbouring  vertices $i_j$ and $i_{j+1}$ with $0\leq j<n$ such that $i_j=i_{j+1}$, then we define an
{\em elementary $(n-1)$-path $e_{n-1}$ from $e_n$} by the formula $e_{n-1}=i_0\cdots \widehat{i_j}\cdots i_n$. Moreover,
if $e_{n-1}$ is again non-regular, one can continue this procedure to obtain finally a regular elementary $m$-path $e_m$ for
some $1\leq m<n-1$. Such an $m$-path $e_m$ is called the \emph{reduced path} of $e_n$.
}
\end{defn}

The construction of $S(V)$ is hence can be done by induction on its $n$-skeleton.

(i)\; \emph{$0$-skeleton of $S(V)$:} Each elementary 0-path, say, $e_0=i_j$ gives naturally a characteristic map $\phi_{e_0}$ on $\Delta^0$ by the map $X_{e_0}$
from $\{0\}$ to $\{i_j\}$.

(ii)\; \emph{$1$-skeleton of $S(V)$:} Each regular elementary 1-path, say, $e_1=i_ki_l$  gives a characteristic map $\phi_{e_1}$ on $\Delta^1$ by the map $X_{e_1}:$
$\{0,1\}\rightarrow \{i_k, i_l\}$ in the following way: the vertices $0$ and $1$ of $\Delta^1$, by step (i), are
mapped to the vertices $i_k$ and $i_l$  respectively, and $\Delta^1$  is mapped to an edge from $i_k$ to $i_l$.  Otherwise if
$e_1$ is non-regular, then by Definition \ref{reduce}, one obtains its reduced path $e_0=i_k$.
If we define $\phi_{e_1}:=\phi_{e_{0}}$, where $\phi_{e_0}$ is already defined
for $e_0$  by inductive step (i), then such a reduced path $e_0$ assigns $e_1$ to a 0-cell with the  characteristic map $\phi_{e_1}$.

(iv)\; \emph{$n$-skeleton of $S(V)$:} Suppose for all $s$ with $1\leq s<n$ the characteristic maps of any elementary $s$-paths are given,
the discussion is divided into two cases. (1) Each regular elementary $n$-path, say, $e_n=i_0\cdots i_n$ gives a characteristic map $\phi_{e_n}$ on $\Delta^n$
in the following way. Since there exists a canonically correspondence between
the $j^{th}$-faces of $\Delta^n$ and the ordered vertices sequences $\varepsilon_j([n-1])=\{0,1,\cdots,\widehat{j},\cdots, n\}$ for $0\leq j\leq n$, one can associate such $j^{th}$-faces with respectively elementary $(n-1)$-paths $i_0i_1\cdots\widehat{i_j}\cdots i_n$, which are exactly the images of the maps $X_{e_n}\varepsilon_j:[n-1]\rightarrow \{i_0, i_1, \cdots, i_n\}$
by preserving orders. If one of such $(n-1)$-paths is regular, then it gives a characteristic map on the corresponding $j^{th}$-face by inductive
steps. If one of the  $(n-1)$-paths, say, $e_{n-1}=i_0i_1\cdots\widehat{i_j}\cdots i_n$ is non-regular for some $0\leq j\leq n$, consider each of its
reduced paths, say an elementary $m$-path $e_{m}=i_0i_1\cdots\widehat{i_k}\cdots\widehat{i_j}\cdots\widehat{i_l}\cdots i_n$ obtained from
$e_{n-1}$ by a further omission of $(n-m-1)$ vertices for some $m<n-1$. It induces naturally a projection, denoted as $q_{e_{n-1}}$, from the
$(n-1)$-face represented by $\{0,1,\cdots,\widehat{j},\cdots,n\}$ onto some of its $m$-face represented by
$\{0,1,\cdots,\widehat{k},\cdots,\widehat{j},\cdots,\widehat{l},\cdots,n\}$. Combining the projection with the characteristic
map $\phi_{e_{m}}$ obtained from $e_{m}$ by inductive process, one finally gets a composition $\phi_{e_{n-1}}:=\phi_{e_{m}}q_{e_{n-1}}$ acting
on the original $j^{th}$-face of $\Delta^n$ which is represented by ordered vertices sequences $\{0,1,\cdots,\widehat{j},\cdots,n\}$. Hence the desired
characteristic map  $\phi_{e_n}$ on the $(n-1)$-faces of $\Delta^n$ can be defined by these maps. To conclude, one demands that the characteristic
map $\phi_{e_n}$ confined on the interior of $\Delta^n$ is a homeomorphism (see the following Example). (2) Otherwise if $e_n$ is non-regular,
then its reduced path $e_m$ defines a characteristic map $\phi_{e_n}:=\phi_{e_m}$ on $\Delta^{m}$ for some $m<n$, which are defined by inductive steps.
Notice that whether $e_n$ is regular or not, the characteristic map $\phi_{e_n}$ here defined is uniquely determined up to homotopy.

In this manner, one obtains finally an s$\Delta$-complex $S(V)$ from $V$.

\begin{eg}\label{eg3.3}
\emph{Let $V=\{a, b, c\}$ be a set of three vertices, the following figures demonstrate four distinguished characteristic maps associated with
respectively elementary paths $e_{ab}$, $e_{abc}$, $e_{aba}$ and $e_{abab}$. As one can see, the resulting s$\Delta$-complexes are
respectively a 1-cell, two 2-cells and a 3-cell. In Figure 4, regarding to the non-regular elementary $2$-path $e_{aa}$ occurs in the boundary
of $e_{aba}$, the characteristic map $\phi_{e_{aba}}$ gives degeneracy on $1$-face ``$02$", i.e. the composition of linear projection map
from the edge ``$02$" onto its vertex $0$ with the map $\phi_{e_0}$, and $\phi_{e_{aba}}$ is homeomorphism on the interior of $\Delta^2$,
which is given by for example, the function $f(x, y)=xy$, assuming the coordinates of vertices $0,1$ and $2$ are respectively $(0, -1), (2, 1)$
and $(0, 1)$. Similarly, in Figure 5, the boundary map of $\phi_{e_{abab}}$ sends the two  $2$-faces ``013" and ``023" associated respectively
with the boundary paths $e_{aab}$ and $e_{abb}$ onto one common edge from $a$ to $b$, while $\phi_{e_{abab}}$ gives a homeomorphism
on the interior of $\Delta^3$.}
\end{eg}

\[
\begin{tikzpicture}[scale=1,baseline=(current bounding box.center)]
\begin{scope}[decoration={
    markings,
    mark=at position 0.5 with {\arrow{latex}}}
    ]

    \draw[blue,postaction={decorate}] (180:2)--(0,0);
     \node[circle,fill=black,inner sep=1pt] at (0,0)(b){} node at +(0:0.2){$1$};
     \node[circle,fill=black,inner sep=1pt] at (180:2)(a1){} node[shift=(180:0.2)] at (a1) {$0$};
\end{scope}
\end{tikzpicture}
\hspace{0.5cm}
\Longrightarrow
\hspace{0.5cm}
\begin{tikzpicture}[scale=1,baseline=(current bounding box.center)]
\begin{scope}[decoration={
    markings,
    mark=at position 0.5 with {\arrow{latex}}}
    ]

    \draw[blue,postaction={decorate}] (180:2)--(0,0);
    \node[circle,fill=black,inner sep=1pt] at (180:2)(a){} node[shift=(180:0.2)] at (a) {$a$};
    \node[circle,fill=black,inner sep=1pt] at (0,0)(b){} node at +(0:0.2){$b$};
\end{scope}
\end{tikzpicture}
\]
\vspace{5pt}
\begin{center}{Figure 2 :\ \ \ \ $\phi_{e_{ab}}(\Delta^{1})$ is obtained from  $\Delta^1$ with the vertices decided by the valuation of $e_{ab}$.}
\end{center}

\[
\begin{tikzpicture}[scale=1,baseline=(current bounding box.center)]
\begin{scope}[decoration={
    markings,
    mark=at position 0.5 with {\arrow{latex}}}
    ]
    \draw[fill=blue!20, line join=round] (180:2)--(0,0)--(120:2) -- cycle;
    \draw[blue,postaction={decorate}] (180:2)--(0,0);
    \draw[blue,postaction={decorate}] (0,0)--(120:2);
    \draw[blue,postaction={decorate}] (180:2)--(120:2);
     \node[circle,fill=black,inner sep=1pt] at (0,0)(b){} node at +(0:0.2){$1$};
     \node[circle,fill=black,inner sep=1pt] at (180:2)(a1){} node[shift=(180:0.2)] at (a1) {$0$};
     \node[circle,fill=black,inner sep=1pt] at (120:2)(a2){} node[shift=(90:0.2)] at (a2) {$2$};
\end{scope}
\end{tikzpicture}
\hspace{0.5cm}
\Longrightarrow
\hspace{0.5cm}
\begin{tikzpicture}[scale=1,baseline=(current bounding box.center)]
\begin{scope}[decoration={
    markings,
    mark=at position 0.5 with {\arrow{latex}}}
    ]
    \fill[blue!20] (-1,0) to[out=-85,in=180] (0,-0.6) to[out=0,in=-95] (1,0) to[out=95,in=0] (0,0.6) to[out=180,in=85] (-1,0) -- cycle;
    \draw[color=blue,postaction={decorate}] (-1,0) to[out=-85,in=180] (0,-0.6) to[out=0,in=-95] (1,0);
    \draw[color=blue,postaction={decorate}] (1,0) to[out=95,in=0] (0,0.6);
    \draw[color=blue,postaction={decorate}] (-1,0) to[out=85,in=180]  (0,0.6);
    \node[circle,fill=black,inner sep=1pt] at (-1,0)(a){} node[shift=(180:0.2)] at (a) {$a$};
    \node[circle,fill=black,inner sep=1pt] at (1,0)(b){} node[shift=(0:0.2)] at (b) {$b$};
    \node[circle,fill=black,inner sep=1pt] at (0,0.6)(c){} node[shift=(90:0.2)] at (c) {$c$};
    \end{scope}
\end{tikzpicture}
\]
\vspace{5pt}
\begin{center}{Figure 3 :\ \ \ \ $\phi_{e_{abc}}(\Delta^{2})$ is obtained from  $\Delta^2$ without any degeneracy on the faces.}\end{center}

\[
\begin{tikzpicture}[scale=1,baseline=(current bounding box.center)]
\begin{scope}[decoration={
    markings,
    mark=at position 0.5 with {\arrow{latex}}}
    ]
    \draw[fill=blue!20, line join=round] (-150:2)--(0,0)--(150:2) -- cycle;
    \draw[blue,postaction={decorate}] (-150:2)--(0,0);
    \draw[blue,postaction={decorate}] (0,0)--(150:2);
    \draw[blue,postaction={decorate}] (-150:2)--(150:2);
     \node[circle,fill=black,inner sep=1pt] at (0,0)(b){} node at +(0:0.2){$1$};
     \node[circle,fill=black,inner sep=1pt] at (-150:2)(a1){} node[shift=(180:0.2)] at (a1) {$0$};
     \node[circle,fill=black,inner sep=1pt] at (150:2)(a2){} node[shift=(180:0.2)] at (a2) {$2$};
\end{scope}
\end{tikzpicture}
\hspace{0.5cm}
\Longrightarrow
\hspace{0.5cm}
\begin{tikzpicture}[scale=1,baseline=(current bounding box.center)]
\begin{scope}[decoration={
    markings,
    mark=at position 0.5 with {\arrow{latex}}}
    ]
    \fill[blue!20] (0,0) to[out=-85,in=-95] (0:2) to[out=95,in=85] (0,0) -- cycle;
    \draw[color=blue,postaction={decorate}] (0,0) to[out=-85,in=-95] (0:2);
    \draw[color=blue,postaction={decorate}] (0:2) to[out=95,in=85] (0,0);
    \node[circle,fill=black,inner sep=1pt] at (0,0)(a){} node at +(180:0.2){$a$};
    \node[circle,fill=black,inner sep=1pt] at (0:2)(b){} node[shift=(0:0.2)] at (b) {$b$};
\end{scope}
\end{tikzpicture}
\]
\vspace{5pt}
\begin{center}{Figure 4 :\ \ \ \ $\phi_{e_{aba}}(\Delta^{2})$ is obtained from  $\Delta^2$ with degeneracy on the $1$-face ``$02$".}
\end{center}

\[
\begin{tikzpicture}[scale=1.2,baseline=(current bounding box.center)]
\begin{scope}[decoration={
    markings,
    mark=at position 0.5 with {\arrow{latex}}}
    ]
    \fill[fill opacity=0.8, left color=black!20, right color=black!0] (0,0)--(0.2,1)--(2.2,0) -- cycle;
    \fill[fill opacity=0.8,left color=blue!50, right color=blue!10] (2.2,0)--(0.2,1)--(1.2,2) -- cycle;
    \fill[fill opacity=0.8, left color=black!40, right color=black!10] (0,0)--(0.2,1)--(1.2,2) -- cycle;
    \draw[color=black!60,postaction={decorate}] (0,0)--(2.2,0);
    \draw[color=blue!90,postaction={decorate}, dashed] (2.2,0)--(0.2,1);
    \draw[color=blue!40!black!70,postaction={decorate}] (0.2,1)--(1.2,2);
    \draw[postaction={decorate}] (0,0)--(0.2,1);
    \draw[black,postaction={decorate}] (0,0)--(1.2,2);
    \draw[color=blue,postaction={decorate}] (2.2,0)--(1.2,2);
    \node[circle,fill=black,inner sep=1pt] at (0,0)(a1){} node[shift=(-180:0.2)] at (a1) {$0$};
    \node[circle,fill=black,inner sep=1pt] at (2.2,0)(b1){} node[shift=(0:0.2)] at (b1) {$1$};
    \node[circle,fill=black,inner sep=1pt] at (0.2,1)(a2){} node[shift=(180:0.2)] at (a2) {$2$};
    \node[circle,fill=black,inner sep=1pt] at (1.2,2)(b2){} node[shift=(90:0.2)] at (b2) {$3$};
\end{scope}
\end{tikzpicture}
\Longrightarrow
\hspace{0.2cm}
\begin{tikzpicture}[scale=1.2,baseline={([yshift=0pt]current bounding box.base)},decoration={
    markings,
    mark=at position 0.5 with {\arrow{latex}}}
    ]
  \fill[fill opacity=0.8,left color=blue!50, right color=blue!10, line join=round] (0,0) to[out=85,in=95] (2.2,0) to [out=-170,in=-10] (0,0) -- cycle;
  \fill[fill opacity=0.8,left color=black!20, right color=black!0, line join=round] (0,0) to[out=-85,in=-95] (2.2,0) to [out=-170,in=-10] (0,0) -- cycle;
  \draw[black!60,postaction={decorate}] (0,0) to[out=-10,in=-170] (2.2,0);
  \draw[color=black!80] (0,0) to[out=-85,in=-95] (2.2,0);
  \draw[color=blue!80] (2.2,0) to [out=95,in=85] (0,0);
  \draw[color=blue!90,postaction={decorate}, dashed] (2.2,0) to [out=170,in=10] (0,0);
  \node[circle,fill=black,inner sep=1pt] at (0,0)(a){} node[shift=(-180:0.2)] at (a) {$a$};
  \node[circle,fill=black,inner sep=1pt] at (2.2,0)(b){} node[shift=(0:0.2)] at (b) {$b$};
\end{tikzpicture}
\]
\vspace{5pt}
\begin{center}{Figure 5:\ \ \ \  $\phi_{e_{abab}}$ is obtained from  $\Delta^3$ with degeneracy on $2$-faces ``013" and ``023".}
\end{center}

To continue, we shall pick a subcomplex from $S(V)$.  The following definition is crucial for our construction.

\begin{defn}\label{admiss}
\em{
Let $P$ be a path complex over the vertices set $V$, and let $e_n\in P_n$ be an elementary $n$-path on $V$, then $e_n$ is called
\emph{admissible} for $P$ if $e_n$ occurs as a summand in the expression of some allowed $n$-path $p_n\in\Omega_n(P)$, that is to say,
if one writes $p_n=\sum_{j=1}^{s}r_{j}e_{\mathbf{i}^{(j)}}$ with $s$
a finite integer and each nonzero $r_{j}\in R$, $e_{\mathbf{i}^{(j)}}\in P_{n}$, then one has $e_n=e_{\mathbf{i}^{(j)}}$ for some multi-index $\mathbf{i}^{(j)}=i_{0}^{(j)}\cdots i_{n}^{(j)}$.
}
\end{defn}

\begin{rem}
\emph{
Though an elementary $n$-path need not to be in an allowed $n$-path in $\Omega_n(P)$, there is a pure geometric way to find out all the admissible elementary $n$-paths at all dimension levels. To see this, consider
the $n$-cells associated with all elementary $n$-paths in $P_n$. By our construction, any two of them intersect at one $(n-1)$-face at most
(otherwise they coincides). After maybe delete some elementary $n$-paths, one can find a maximal group $\{e_j\}^k_{j=1}$
of elementary $n$-paths such that the boundaries of the glued $n$-cells in $S(V)$ associated with all these $e_j$'s, lies exactly
in the collection of all $(n-1)$-cells associated with the elementary $(n-1)$-paths in $P_{n-1}$. It is not hard to see that any elementary
$n$-path $e_j$ in this maximal group is admissible, the reverse direction is also true, but the proof is technical and tedious so we omit it here.
}
\end{rem}

Now we can give the promised s$\Delta$-complex $S(P)$.

\begin{defn}\label{sp}

\emph{
Let $P$ be a path complex over $V$, an s$\Delta$-complex $S(P)$ is defined as a subcomplex of $S(V)$ consisting of  only the
cells associated with the admissible elementary paths. To be explicit,  $S(P):=\coprod\phi_{e_p}(e_p)/ \sim\ \subset\widetilde{S(P)}$  can
be obtained by glueing all the cells $\phi_{e_p}(e_p)$ via the characteristic maps restricted to the faces, where $e_p$ runs over all admissible
paths at all dimension levels.
}
\end{defn}

By definition, one obtains a simplicial structure on s$\Delta$-complex $S(P)$ as follows.

Suppose $\{e_{\mathbf{i}}=e_{i_{0}\cdots i_{n}}\}$  is an admissible elementary $n$-paths of $P$, for $e_{\mathbf{i}}$ by the above construction we can define a map
$$F_{S}: e_{\mathbf{i}}\mapsto\phi_{e_{\mathbf{i}}}$$ where $\phi_{e_{\mathbf{i}}}$ is the characteristic map
$$\phi_{e_{\mathbf{i}}}: \Delta^n\rightarrow S(P)$$ previously defined.  It is obvious from the definition of $\phi_{e_{\mathbf{i}}}$ that one always
has $\partial_{p}\phi_{e_{\mathbf{i}}}(\Delta^p)=\phi_{e_{\mathbf{i}}}\partial_{p}(\Delta^p)$ where the right hand side map means the restriction of $\phi_{e_{\mathbf{i}}}$.
If we denote $F_{\Delta}=\phi_{e_{\mathbf{i}}}F_S$, then from the construction of $S(P)$ one gets the following map
$$F_{\Delta}: e_{\mathbf{i}}\mapsto S(P).$$
Note also that $F_{\Delta}$ can be extended linearly to become a map
from $\Omega_\ast(P; R)\rightarrow C_\ast(S(P); R)$, where $C_\ast(S(P); R)$ is the cellular (or read as simplicial) chain complex of $S(P)$.

We give a simple example to illustrate the above construction.
\begin{eg}
\emph{Consider the path complex $P$ defined as follows:\\
\indent $0$-paths: $0, 1, 2, 3$\\
\indent $1$-paths: $01, 02, 03, 10, 12, 13$\\
\indent $2$-paths: $010, 012$.\\
By definition and the result in Example \ref{eg3.3}, its geometric realization $S(P)$ is a 2-dimensional s$\Delta$-complex
with two sheets and a handle, which is drawn on Figure 6.}
\end{eg}
\[
\begin{tikzpicture}[scale=1,baseline=(current bounding box.center),decoration={
    markings,
    mark=at position 0.5 with {\arrow{latex}}}]
    \fill[blue!20] (0,0)--(0:3) to[out=95,in=0] (1.5,1.2) to[out=-180,in=85] (0,0);
    \fill[blue!50] (0,0) to[out=55,in=125] (0:3)--(0,0) -- cycle;
    \draw[color=blue,postaction={decorate}] (0,0)--(0:3);
    \draw[color=blue,postaction={decorate}] (0:3) to[out=125,in=55] (0,0);
    \draw[color=blue,postaction={decorate}] (0,0) to[out=85,in=-180] (1.5,1.2);
    \draw[color=blue,postaction={decorate}] (3,0) to[out=95,in=0] (1.5,1.2);
    \draw[color=blue,postaction={decorate}] (0,0)-- (1.5,-1.2);
    \draw[color=blue,postaction={decorate}] (3,0)-- (1.5,-1.2);
    \node[circle,fill=black,inner sep=1pt] at (0,0)(0){} node at +(180:0.2){$0$};
    \node[circle,fill=black,inner sep=1pt] at (0:3)(1){} node[shift=(0:0.2)] at (1) {$1$};
    \node[circle,fill=black,inner sep=1pt] at (1.5,1.2)(2){} node[shift=(90:0.2)] at (2) {$2$};
    \node[circle,fill=black,inner sep=1pt] at (1.5,-1.2)(3){} node[shift=(-90:0.2)] at (3) {$3$};
\end{tikzpicture}
\]
\begin{center}{Figure 6:\ \ \ \  The geometric realization $S(P)$ of $P$.}
\end{center}

\subsection{(Co)homological isomorphism}
\indent

\noindent Now we are ready to prove the main result of this section.
\begin{lem}\label{degeneracy}
Let $e_n$ be a regular elementary $n$-path over a finite set $V$,  and let $e_{n-1}$ be an elementary $(n-1)$-path in the boundary
of $e_n$, then either that $e_{n-1}$ is regular or that the reduced path of $e_{n-1}$ is an elementary $(n-2)$-path which lies in the
boundary of some other regular boundary $(n-1)$-path $e^\prime_{n-1}$ of $e_n$.
\end{lem}

\begin{proof}
Assume that $e_n=e_{i_0\cdots i_n}$ and $e_{n-1}=e_{i_0\cdots i_{p-1}\widehat{i}_pi_{p+1}\cdots i_n}$ in the boundary of $e_n$. If $e_{n-1}$ is non-regular one
has $i_{p-1}=i_{p+1}$, it follows that if we denote $e_{n-2}$ as the elementary $(n-2)$-path $e_{i_0\cdots i_{p-1}\widehat{i}_p\widehat{i}_{p+1}i_{p+2}\cdots i_n}$,
then $e_{n-2}$ is the reduced path of  $e_{n-1}$. To see this one just notices that $e_{n-2}=e_{i_0\cdots i_{p-2}\widehat{i}_{p-1}\widehat{i}_{p}i_{p+1}\cdots i_n}$
is already regular, for otherwise one has $i_{p-2}=i_{p+1}=i_{p-1}$, contradicting to that $e_n$ is regular. Now if $i_{p-2}\neq i_{p}$, then let
$e^\prime_{n-1}=e_{i_0\cdots i_{p-2}\widehat{i}_{p-1}i_{p}\cdots i_n}$ and we are done. Otherwise one has $i_{p-2}=i_{p}$, then if the $n-1$-path
$e_{i_0\cdots i_{p-3}\widehat{i}_{p-2}i_{p-1}\cdots i_n}$ is regular, we let it be $e^\prime_{n-1}$ and the proof is finished, or one would have
$i_{p-3}=i_{p-1}$ and one can consider the $(n-1)$-path $e_{i_0\cdots i_{p-4}\widehat{i}_{p-3}i_{p-2}\cdots i_n}$ to continue this procedure by shifting the
omission index forward until one finds a regular elementary $(n-1)$-path $e^\prime_{n-1}$, this process can certainly be terminated since $e_{\widehat{i_0}\cdots i_{p-1}i_pi_{p+1}\cdots i_n}$
is already regular by Definition \ref{pathcomplex}. The result hence follows.
\end{proof}

\begin{lem} \label{lem3.7}
Given a regular path complex $P$ over a finite set $V$. Let $e_{n}\in P_{n}$, if $e_{n}$ is admissible, then $F_{\Delta}(\partial_{n}(e_{n}))=\partial_{n}(F_{\Delta}(e_{n}))$.
\end{lem}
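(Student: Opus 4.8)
The plan is to evaluate the two sides of the asserted identity separately and then match them summand by summand; the essential content is to reconcile the \emph{regular} path boundary, which simply discards the non-regular faces of $e_n$, with the cellular boundary of the cell $\phi_{e_n}(\Delta^n)$, in which the corresponding faces appear as \emph{degenerate} simplices and hence vanish. Write $e_n=e_{i_0\cdots i_n}$. Since $e_n$ is admissible, $\phi_{e_n}(\Delta^n)$ is a genuine $n$-cell of $S(P)$ and $F_\Delta(e_n)=\phi_{e_n}(\Delta^n)$.

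First I would invoke the simplex-level commutation recorded just before the definition of $F_\Delta$, namely $\partial_n\phi_{e_n}(\Delta^n)=\phi_{e_n}\bigl(\partial_n\Delta^n\bigr)=\sum_{q=0}^{n}(-1)^q\,\phi_{e_n}|_{F_q}$, where $F_q$ is the $q$-th canonically oriented face of $\Delta^n$. By the preceding inductive construction of the characteristic maps, the restriction $\phi_{e_n}|_{F_q}$ is exactly the characteristic map determined by the valuated $(n-1)$-path $e^{(q)}:=e_{i_0\cdots\widehat{i_q}\cdots i_n}$. I would then split the sum according to the regularity of $e^{(q)}$. When $e^{(q)}$ is regular, $\phi_{e_n}|_{F_q}=\phi_{e^{(q)}}(\Delta^{n-1})$ is an honest $(n-1)$-cell, i.e. a generator of $C_{n-1}(S(P))$. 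When $e^{(q)}$ is non-regular, the construction gives $\phi_{e_n}|_{F_q}=\phi_{e_m}\circ\rho$ with $\rho$ a linear surjection onto a simplex of dimension $m\le n-2$; thus this face is a degenerate simplex whose image lies in the $(n-2)$-skeleton and therefore contributes $0$ to the cellular (equivalently, normalized simplicial) boundary in $C_{n-1}(S(P))$. This is precisely where Lemma~\ref{degeneracy} does the bookkeeping: it identifies the reduced path of each non-regular face as a bona fide $(n-2)$-path already sitting in the boundary of another regular face $e'_{n-1}$ of $e_n$, confirming both that the collapsed image belongs to the lower skeleton and that it cannot be mistaken for a surviving $(n-1)$-cell. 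Hence $\partial_n(F_\Delta(e_n))=\sum_{q:\,e^{(q)}\text{ regular}}(-1)^q\,\phi_{e^{(q)}}(\Delta^{n-1})$.

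Finally, by the definition of the regular boundary as the map induced by $(\ref{pdif})$ on the quotient by non-regular paths, one has $\partial_n e_n=\sum_{q:\,e^{(q)}\text{ regular}}(-1)^q e^{(q)}$, the non-regular faces being dropped; note that the signs $(-1)^q$ match by construction, since both boundaries respect the ordering of vertices. Applying $F_\Delta$ termwise sends each surviving $e^{(q)}$ to $\phi_{e^{(q)}}(\Delta^{n-1})$, reproducing exactly the expression obtained above, and comparing the two computations yields the claim.

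I expect the middle step to be the main obstacle: one must verify rigorously that every non-regular face factors through a simplex of dimension at most $n-2$, so that its degree onto each $(n-1)$-cell is zero, and that Lemma~\ref{degeneracy} indeed prevents any such collapsed face from coinciding with a surviving regular face. A secondary point worth confirming is that each regular face $e^{(q)}$ is itself admissible, so that $F_\Delta(e^{(q)})$ names a cell of $S(P)$ rather than merely of $\widetilde{S(P)}$; this is consistent with $S(P)$ being a subcomplex of $\widetilde{S(P)}$ (cf. the Remark following Definition~\ref{admiss}), and in any case causes no difficulty once $F_\Delta$ is extended linearly over an element of $\Omega_n(P)$, whose boundary lands in $\Omega_{n-1}(P)$ and hence has only admissible summands in the sense of Definition~\ref{admiss}.
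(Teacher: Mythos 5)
Your proposal is correct and follows essentially the same route as the paper's proof: both sides are expanded, the cellular boundary $\partial_n\phi_{e_n}(\Delta^n)=\phi_{e_n}(\partial_n\Delta^n)$ is split into regular and non-regular faces, and Lemma~\ref{degeneracy} is used to show the non-regular faces are degenerate (collapsing onto $(n-2)$-faces of regular neighbouring faces) and hence contribute zero, after which the two sums match term by term. Your closing observations --- that each non-regular face factors through dimension at most $n-2$ and that the surviving regular faces are admissible because $\partial_n(e_n)\in\Omega_{n-1}(P)$ --- are accurate refinements of points the paper treats more tersely, not deviations from its argument.
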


\begin{proof}
Let $V=\{i_{l}\}$, write $e_{n}=e_{i_{0}i_{1}\cdots i_{n}}$ and denote $e_{\widehat{i_q}}=e_{i_{0}\cdots\widehat{i_{q}}\cdots i_{n}}$. Hence $e_{n}$
gives rise to a distinguished characteristic map $\phi_{e_{n}}: \Delta^n\rightarrow S(P)$. Suppose
$\Delta^{n-1}_{e_{\widehat{i_q}}}$ is the $(n-1)$-face of $\Delta^n$ decided by $e_{\widehat{i_q}}$, and denote $\phi_{e_{\widehat{i_{q}}}}$
as the map of $\phi_{e_{n}}$ restricted to $\Delta^{n-1}_{e_{\widehat{i_q}}}$.
Let us first compute $F_{\Delta}(\partial_{n}(e_{n}))$, recall that by the definition of boundary maps of regular path complexes one has
\begin{align*}
F_{\Delta}\big(\partial_{n}(e_{n})\big)&=F_{\Delta}\bigg(\sum_{\textit{regular\ }e_{\widehat{i_{q}}}}(-1)^{q}e_{\widehat{i_{q}}}\bigg)\\
&=\sum_{\textit{regular\ }e_{\widehat{i_{q}}}}(-1)^{q}F_{\Delta}\big(e_{\widehat{i_{q}}}\big)\label{FDel}\\
&=\sum_{\textit{regular\ }e_{\widehat{i_{q}}}}(-1)^{q}\phi_{e_{\widehat{i_{q}}}}(\Delta^{n-1}_{e_{\widehat{i_q}}}).
\end{align*}
On the other hand one has
\begin{align*}
\partial_{n}\big(F_{\Delta}(e_{n})\big)&=\partial_{n}\big(\phi_{e_{n}}(\Delta^{n})\big)\\
&=\phi_{e_{n}}\big(\partial_{n}(\Delta^{n})\big)\\
&=\phi_{e_{n}}\bigg(\sum_{\textit{regular\ }e_{\widehat{i_{q}}}}(-1)^{q}\Delta^{n-1}_{e_{\widehat{i_q}}}+\sum_{\textit{non-regular\ }e_{\widehat{i_{p}}}}(-1)^{p}\Delta^{n-1}_{e_{\widehat{i_p}}}\bigg)\\
&=\sum_{\textit{regular\ }e_{\widehat{i_{q}}}}(-1)^{q}\phi_{e_{\widehat{i_{q}}}}\big(\Delta^{n-1}_{e_{\widehat{i_q}}}\big)+\sum_{\textit{non-regular\ }e_{\widehat{i_{p}}}}(-1)^{p}\phi_{e_{\widehat{i_{p}}}}\big(\Delta^{n-1}_{e_{\widehat{i_p}}}\big)\\
&=\sum_{\textit{regular\ }e_{\widehat{i_{q}}}}(-1)^{q}\phi_{e_{\widehat{i_{q}}}}\big(\Delta^{n-1}_{e_{\widehat{i_q}}}\big).
\end{align*}
The last equality follows from that if $e_{\widehat{i_p}}$ is non-regular, then
in fact $\phi_{e_{\widehat{i_{p}}}}(\Delta^{n-1}_{e_{\widehat{i_p}}})$ coincidences with an $(n-2)$-face of some other
$(n-1)$-face $\phi_{e_{\widehat{i_{q}}}}\big(\Delta^{n-1}_{e_{\widehat{i_q}}}\big)$ for some regular $e_{\widehat{i_{q}}}$ by Lemma \ref{degeneracy},
it vanishes modulo the gluing relations. The proof is finished by comparing the above two equalities.
\end{proof}

\begin{lem}\label{lem3.8}
The map $F_{\Delta}: \Omega_\ast(P; R)\rightarrow C_\ast(S(P); R)$ is injective.
\end{lem}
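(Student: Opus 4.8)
The plan is to exhibit $F_\Delta$ as a map that carries the distinguished generating set of $\Omega_\ast(P;R)$ to a linearly independent set in $C_\ast(S(P);R)$, and then deduce injectivity from linearity. First I would recall that $\Omega_n(P;R)$ is an $R$-submodule of the free module $\mathcal{A}_n(P;R)$ spanned by the elementary $n$-paths $e_{\mathbf i}\in P_n$, and that by Lemma \ref{lem3.7} the map $F_\Delta$ is a chain map on admissible paths, so it restricts to a genuine morphism of chain complexes $\Omega_\ast(P;R)\to C_\ast(S(P);R)$. The key structural observation is that on an \emph{admissible} elementary $n$-path $e_{\mathbf i}$, the characteristic map $\phi_{e_{\mathbf i}}$ is a homeomorphism on the interior of $\Delta^n$ (no degeneracy occurs, precisely because $e_{\mathbf i}$ is regular and admissible), so $F_\Delta(e_{\mathbf i})=\phi_{e_{\mathbf i}}(\Delta^n)$ is a genuine (nondegenerate) $n$-cell of $S(P)$, i.e.\ a basis element of $C_n(S(P);R)$.

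Next I would establish the crucial injectivity-on-generators claim: the assignment $e_{\mathbf i}\mapsto \phi_{e_{\mathbf i}}(\Delta^n)$ sends \emph{distinct} admissible elementary $n$-paths to \emph{distinct} $n$-cells of $S(P)$. This follows from the construction in Definition \ref{sp} together with the Remark preceding it: an admissible $n$-path yields a nondegenerate $n$-cell, and by the construction of $\widetilde{S(P)}$ two distinct elementary $n$-paths produce cells that intersect in at most an $(n-1)$-face, hence are never identified as cells. Consequently the images $\{\phi_{e_{\mathbf i}}(\Delta^n)\}$, as $e_{\mathbf i}$ ranges over the admissible $n$-paths, form a subset of the distinguished characteristic $n$-cells of $S(P)$, with no collisions; since those cells are part of a free $R$-basis of $C_n(S(P);R)$, this subset is $R$-linearly independent.

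To finish, take an arbitrary element $p_n=\sum_{i=1}^{s} r_i\, e_{\mathbf i}\in\Omega_n(P;R)$ with each $r_i\neq 0$ and the $e_{\mathbf i}$ pairwise distinct; by Definition \ref{admiss} every such summand $e_{\mathbf i}$ is admissible. Applying $F_\Delta$ linearly gives $F_\Delta(p_n)=\sum_{i=1}^{s} r_i\,\phi_{e_{\mathbf i}}(\Delta^n)$, a linear combination of distinct basis $n$-cells with nonzero coefficients $r_i\in R$; by the linear independence just established this vanishes only if $p_n=0$. Hence $F_\Delta$ is injective in each degree, and therefore injective as a map of chain complexes.

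The main obstacle, and the step deserving the most care, is the injectivity-on-generators claim — specifically, ruling out that two distinct admissible elementary $n$-paths could be glued to the \emph{same} $n$-cell in $S(P)$. This rests on the fact (asserted in the Remark) that nondegenerate cells coming from distinct regular paths overlap in at most an $(n-1)$-face, so the interiors stay disjoint and the characteristic maps remain distinguishably indexed; one must also confirm that the gluing relations defining $S(P)$ identify only faces, never interiors of top cells, so that no cancellation among the $r_i\phi_{e_{\mathbf i}}(\Delta^n)$ can occur. Once this is granted, the remainder is a routine linearity-plus-basis argument.
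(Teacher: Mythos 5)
Your proof is correct and shares the overall skeleton of the paper's: both reduce injectivity of the $R$-linear map $F_\Delta$ to the claim that distinct admissible elementary $n$-paths are carried to distinct $n$-cells, which form part of a free $R$-basis of $C_n(S(P);R)$, and then conclude by the standard linearity argument (the paper calls this ``linear reduction''). Where you genuinely differ is in how the generator-level injectivity is established. The paper proves it combinatorially: assuming $F_\Delta(p_n)=F_\Delta(q_n)$ for elementary paths $p_n=e_{i_p^0\cdots i_p^n}$ and $q_n=e_{i_q^0\cdots i_q^n}$, it takes the least index $l$ with $i_p^l\neq i_q^l$ and derives a contradiction by comparing faces, namely that the $(n-l)$-face $F_\Delta\big(e_{i_q^l\cdots i_q^n}\big)$ of $F_\Delta(q_n)$ cannot appear in $F_\Delta(p_n)$ because $F_\Delta\big(e_{i_p^s\cdots i_p^n}\big)$ is the largest face of $F_\Delta(p_n)$ whose vertex ordering starts from $i_q^l$; hence the vertex sequences agree index by index. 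You instead argue structurally from the construction of $\widetilde{S(P)}$ and Definition \ref{sp}: each regular elementary $n$-path is assigned its own $n$-cell, the gluing in $\coprod\phi_{e_p}(e_p)/\sim$ identifies only boundary faces and never interiors of top cells, and the characteristic map restricted to the $0$-skeleton recovers the ordered vertex sequence, so distinct paths index distinct generators. Both routes are valid; yours is shorter and makes the CW-theoretic mechanism transparent, and you correctly isolate the one point requiring care (that the quotient never merges two top cells), whereas the paper's face-comparison argument is self-contained at the level of vertex labels and does not lean on the Remark following Definition \ref{admiss}, whose full justification the paper omits. One small imprecision on your side: the characteristic map $\phi_{e_{\mathbf i}}$ of a \emph{regular} path is a homeomorphism on the interior of $\Delta^n$ by construction irrespective of admissibility (degeneracies can still occur on proper faces, as in Figures 4 and 5); admissibility is needed only to ensure the cell lies in $S(P)$ rather than merely in $\widetilde{S(P)}$. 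This does not affect your argument.
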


\begin{proof}
To show the result, let $p_{n}=e_{i_{p}^{0}\cdots i_{p}^{n}}$ and $q_{n}=e_{i_{q}^{0}\cdots i_{q}^{n}}$ be two admissible elementary $n$-paths such that
$F_{\Delta}(p_{n})=F_{\Delta}(q_{n})$, one just needs to prove that $p_{n}=q_{n}$ by an inductive discussion. We claim that one has $i^{j}_{p}=i^{j}_{q}$ for each $j$. To
show this, suppose $l$ is the least number such that $i^{l}_{p}\neq i^{l}_{q}$. Since $F_{\Delta}(p_{n})=F_{\Delta}(q_{n})$ implies particularly that
they have same vertices, there must be an integer $s$ such that it is the least number satisfying $i^{s}_{p}= i^{l}_{q}$. Now if $s>l$, then the
$(n-l)$-face  $F_{\Delta}\big(e_{i_{q}^{l}\cdots i_{q}^{n}})$  of $F_{\Delta}(q_{n}\big)$ would not appear in $F_{\Delta}(p_{n})$ since the
$(n-s)$-face $F_{\Delta}\big(e_{i_{p}^{s}\cdots i_{p}^{n}}\big)$ is the largest face of $F_{\Delta}(p_{n})$ with the the ordering vertices starting
from $i^{l}_{q}$, a contradiction. So suppose $s<l$, a similar discussion leads to a contradiction, too. That is to say $i^{j}_{p}=i^{j}_{q}$ for each $j$ and it completes the proof.
\end{proof}

Now we can turn to the main result of this section. Hereafter, both of the path (co)homology and simplicial (co)homology
should be read as (co)homologies with coefficients in $R$ if not specified.

\begin{cor}\label{cor3.9}
For a path complex $P$, the map $F_{\Delta}: \Omega_\ast(P; R)\rightarrow C_\ast(S(P); R)$ induces a 1-1-correspondence of cycles between H$_{\ast}(P)$ and H$_{\ast}(S(P))$.
\end{cor}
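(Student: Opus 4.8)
The plan is to read the statement off from the two preceding lemmas and reduce it to a single surjectivity check. By Lemma \ref{lem3.7} the map $F_\Delta$ is a chain map, so $F_\Delta\partial=\partial F_\Delta$; consequently it sends an $n$-cycle $z\in\Omega_n(P)$ to an $n$-cycle $F_\Delta(z)\in C_n(S(P))$. By Lemma \ref{lem3.8} it is injective, so $\partial z=0$ holds precisely when $\partial F_\Delta(z)=0$, and distinct cycles of $\Omega_\ast(P)$ have distinct images. This already yields an injection from the group of $n$-cycles of $\Omega_\ast(P)$ into the group of $n$-cycles of $C_\ast(S(P))$, and it is this injection that I would prove to be onto the cycles.

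For surjectivity I would use the defining property of $S(P)$ from Definition \ref{sp}: its $n$-cells are exactly the characteristic maps $\phi_{e_n}$ of the admissible elementary $n$-paths. Thus an arbitrary $w\in C_n(S(P))$ can be written $w=\sum_i r_i\phi_{e_i}$ with every $e_i$ admissible, and putting $z=\sum_i r_i e_i$---a combination of admissible, hence allowed, $n$-paths---gives $F_\Delta(z)=w$ by $R$-linearity. Now suppose $w$ is a cycle. Then $F_\Delta(\partial z)=\partial F_\Delta(z)=\partial w=0$, and injectivity (Lemma \ref{lem3.8}) forces $\partial z=0$. Since the zero chain is trivially allowed, $z$ is an allowed $n$-path with allowed boundary, hence lies in $\Omega_n(P)$, and it is a cycle there mapping to $w$. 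Together with the first step this establishes the bijection of cycles, which in turn gives the asserted correspondence between $H_\ast(P)$ and $H_\ast(S(P))$.

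The step needing the most care---and the main obstacle---is the compatibility $F_\Delta(\partial z)=\partial F_\Delta(z)$ that I invoke in both directions. This is exactly where the degeneracies enter: the regular boundary $\partial e_i$ keeps only the regular faces of $e_i$, whereas the cellular boundary of $\phi_{e_i}$ must further record that each non-regular face collapses, via a linear surjection, onto a lower-dimensional cell and so vanishes modulo the gluing relations. That these two prescriptions agree on an admissible path is precisely Lemma \ref{lem3.7}, whose proof rests on the degeneracy analysis of Lemma \ref{degeneracy}; accordingly I would apply Lemma \ref{lem3.7} only to the admissible paths $e_i$ for which it is stated, noting that every $e_i$ appearing in $w$ is admissible by construction. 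A secondary point worth flagging is that the span of all admissible $n$-paths may be strictly larger than $\Omega_n(P)$, so the preimage $z$ is not a priori an element of $\Omega_n(P)$; membership is recovered only after the cycle condition forces $\partial z=0$, and it is important not to assume it beforehand.
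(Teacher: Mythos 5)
Your proposal is correct and takes essentially the same route as the paper: both directions rest on the chain-map identity of Lemma \ref{lem3.7} together with the injectivity of Lemma \ref{lem3.8}. The only difference is that you make explicit the surjectivity of $F_\Delta$ onto the chains of $C_n(S(P);R)$ (every cell of $S(P)$ comes from an admissible path) and the point that the preimage $z$ is known to lie in $\Omega_n(P;R)$ only after the cycle condition forces $\partial z=0$ --- a step the paper glosses over by tacitly assuming every cycle is of the form $F_\Delta(p_n)$ with $p_n\in\Omega_n(P;R)$.
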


\begin{proof}
Suppose $c_{n}=F_{\Delta}(p_{n})$ with $p_{n}=\sum_{i=1}^{m}r_{i}e_{n}^i$ ($r_{i}\in R$) a cycle in $\Omega_n(P; R)$ such that each $e_{n}^i$
is an admissible elementary $n$-path. It follows from $\partial_{n}(p_{n})=0$ that one has $F_{\Delta}\big(\partial_{n}(p_{n})\big)=0$.
Thus by Lemma \ref{lem3.7} one has $\partial_{n}\big(F_{\Delta}(p_{n})\big)=0$. Lemma \ref{lem3.8}
then implies $F_{\Delta}(p_{n})=c_{n}=0$,  that is to say, $c_{n}$ is a cycle in $C_n(S(P); R)$.

Conversely, assume that $c_{n}=F_{\Delta}(p_{n})$ is a cycle in $C_n(S(P); R)$, then it follows from $\partial_{n}\big(F_{\Delta}(p_{n})\big)=0$
and Lemma \ref{lem3.7} that $F_{\Delta}\big(\partial_{n}(p_{n})\big)=0$, hence $\partial_{n}(p_{n})=0$ by Lemma \ref{lem3.8}, i.e., $p_{n}$ is
a cycle in $\Omega_n(P; R)$.Thus we obtain the desired 1-1-correspondence.
\end{proof}

\begin{cor}\label{cor3.10}
For a path complex $P$, the map $F_{\Delta}: \Omega_\ast(P; R)\rightarrow C_\ast(S(P); R)$ induces a $1$-$1$-correspondence of boundaries between H$_{\ast}(P)$ and H$_{\ast}(S(P))$.
\end{cor}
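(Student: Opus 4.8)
The plan is to show that $F_{\Delta}$ restricts to a bijection between the group of boundaries $B_{\ast}(\Omega_\ast(P;R))$ and the group of boundaries $B_{\ast}(C_\ast(S(P);R))$; combined with the bijection of cycles from Corollary \ref{cor3.9}, this yields the homological isomorphism. The two tools I rely on are the chain-map identity of Lemma \ref{lem3.7}, extended $R$-linearly (so that $F_{\Delta}(\partial p)=\partial F_{\Delta}(p)$ on all of $\Omega_\ast(P;R)$, each $\Omega_n(P)$ being spanned by admissible paths in the sense of Definition \ref{admiss}), and the injectivity of $F_{\Delta}$ from Lemma \ref{lem3.8}.

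First I would dispose of the easy inclusion. If $b_{n}\in\Omega_{n}(P)$ is a boundary, say $b_{n}=\partial_{n+1}p_{n+1}$ with $p_{n+1}\in\Omega_{n+1}(P)$, then Lemma \ref{lem3.7} gives $F_{\Delta}(b_{n})=F_{\Delta}(\partial_{n+1}p_{n+1})=\partial_{n+1}F_{\Delta}(p_{n+1})$, which is a boundary in $C_\ast(S(P))$ since $F_{\Delta}(p_{n+1})\in C_{n+1}(S(P))$. Injectivity (Lemma \ref{lem3.8}) makes this assignment one-to-one, so $F_{\Delta}$ embeds $B_{n}(\Omega_\ast(P))$ into $B_{n}(C_\ast(S(P)))$.

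The substantial direction is surjectivity onto $B_{\ast}(C_\ast(S(P)))$. Given a boundary $c_{n}=\partial_{n+1}c_{n+1}$ with $c_{n+1}\in C_{n+1}(S(P))$, I first note $c_{n}$ is a cycle, so Corollary \ref{cor3.9} produces a cycle $p_{n}\in\Omega_{n}(P)$ with $F_{\Delta}(p_{n})=c_{n}$. Since $C_{n+1}(S(P))$ is free on the admissible $(n+1)$-cells (Definition \ref{sp}), I write $c_{n+1}=\sum_{j}s_{j}\,\phi_{f^{j}}(\Delta^{n+1})$ with each $f^{j}$ an admissible $(n+1)$-path, and form the corresponding allowed path $\widetilde{p}_{n+1}=\sum_{j}s_{j}\,e_{f^{j}}$, so that $F_{\Delta}(\widetilde{p}_{n+1})=c_{n+1}$. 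Applying Lemma \ref{lem3.7} to each admissible $f^{j}$ and summing gives $F_{\Delta}(\partial_{n+1}\widetilde{p}_{n+1})=\partial_{n+1}F_{\Delta}(\widetilde{p}_{n+1})=\partial_{n+1}c_{n+1}=c_{n}=F_{\Delta}(p_{n})$, whence $\partial_{n+1}\widetilde{p}_{n+1}=p_{n}$ by injectivity (Lemma \ref{lem3.8}).

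The key point is then that $\widetilde{p}_{n+1}$ automatically lies in $\Omega_{n+1}(P)$: it is allowed, being a combination of paths in $P_{n+1}$, and its boundary $\partial_{n+1}\widetilde{p}_{n+1}=p_{n}$ is allowed because $p_{n}\in\Omega_{n}(P)$. Hence $p_{n}=\partial_{n+1}\widetilde{p}_{n+1}$ is a boundary in $\Omega_\ast(P)$ with $F_{\Delta}(p_{n})=c_{n}$, exhibiting the arbitrary boundary $c_{n}$ as the image of a boundary from $\Omega_\ast(P)$. I expect the only delicate part to be the identity $F_{\Delta}(\partial_{n+1}\widetilde{p}_{n+1})=\partial_{n+1}F_{\Delta}(\widetilde{p}_{n+1})$, since the individual regular faces of the $f^{j}$ need not themselves be admissible; this is handled exactly as in the proof of Lemma \ref{lem3.7}, where the non-admissible (in particular non-regular) faces become degenerate and vanish modulo the gluing relations defining $S(P)$, so that the surviving terms recover precisely the cellular boundary.
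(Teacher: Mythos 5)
Your proposal is correct, and in the converse direction it actually does more than the paper. The forward inclusion is identical in both: for $p_n=\partial_{n+1}p_{n+1}$ with $p_{n+1}\in\Omega_{n+1}(P;R)$, Lemma \ref{lem3.7} gives $F_\Delta(p_n)=\partial_{n+1}\big(F_\Delta(p_{n+1})\big)$, a boundary in $C_\ast(S(P);R)$. The difference lies in the converse: the paper's proof simply begins ``assume that $c_n=\partial_{n+1}\big(F_\Delta(p_{n+1})\big)$ is a boundary in $C_n(S(P);R)$ for some $p_{n+1}\in\Omega_{n+1}(P;R)$,'' i.e.\ it tacitly assumes that every bounding chain in $C_{n+1}(S(P);R)$ already lies in the image of $F_\Delta$ --- which is exactly the nontrivial point. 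You prove this: expanding an arbitrary $c_{n+1}$ over the cells of admissible $(n+1)$-paths (which generate $C_{n+1}(S(P);R)$ by Definition \ref{sp}), lifting to the allowed chain $\widetilde p_{n+1}\in\mathcal{A}_{n+1}(P)$, and using the chain-map identity plus injectivity to identify $\partial_{n+1}\widetilde p_{n+1}$ with the allowed cycle $p_n$ supplied by Corollary \ref{cor3.9}; that identification is precisely what forces $\widetilde p_{n+1}\in\Omega_{n+1}(P)$. So your argument closes a step the paper glides over, at the cost of one caveat you should make explicit: when you cancel $F_\Delta$ in $F_\Delta(\partial_{n+1}\widetilde p_{n+1})=F_\Delta(p_n)$, the chain $\partial_{n+1}\widetilde p_{n+1}$ is a priori only a combination of regular elementary $n$-paths, not yet known to lie in $\Omega_n(P;R)$, whereas Lemma \ref{lem3.8} is stated for $F_\Delta$ on $\Omega_\ast(P;R)$; you therefore need injectivity of the extended $F_\Delta$ (in the sense used in the proof of Lemma \ref{lem3.7}) on such regular chains. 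Since the paper's proof of Lemma \ref{lem3.8} operates at the level of arbitrary elementary paths and their vertex sequences, it extends verbatim, but a sentence saying so belongs in your write-up. Your appeal to Corollary \ref{cor3.9} for the existence of $p_n$ with $F_\Delta(p_n)=c_n$ is legitimate as that corollary is stated (a $1$-$1$ correspondence of cycles includes surjectivity), and is consistent with how the paper itself uses it.
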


\begin{proof}
The proof is very similar to the above one. Suppose $p_{n}=\partial_{n+1}(p_{n+1})=\partial_{n+1}\big(\sum_{i=1}^{m}r_{i}e_{n+1}^i$\big) ($r_{i}\in R$)
is a boundary in $\Omega_n(P; R)$ such that each $e_{n+1}^i$ is an admissible elementary $(n+1)$-path. Denote $c_n=F_{\Delta}(p_{n})$,
then by Lemma \ref{lem3.7} one gets $c_n=F_{\Delta}\big(\partial_{n+1}(p_{n+1})\big)=\partial_{n+1}\big(F_{\Delta}(p_{n+1})\big)$. This shows that
$c_n$ is a boundary in $C_n(S(P); R)$.

Conversely, assume that $c_{n}=\partial_{n+1}\big(F_{\Delta}(p_{n+1})\big)$ is a boundary in $C_n(S(P); R)$ for some $p_{n+1}\in\Omega_{n+1}(P; R)$,
then $p_{n}=\partial_{n+1}(p_{n+1})$ is a boundary in $\Omega_n(P; R)$ and apparently one has $c_{n}=F_{\Delta}(p_{n})$ by Lemma \ref{lem3.7}, which completes the proof.
\end{proof}

We conclude this section by the two main theorems as follows.
\begin{mytheo}\label{thm3.10}
Let $P$ be a path complex and $R$ be a ring, the map $F_\Delta$ induces an isomorphism $H_\ast(P)\cong H_\ast(S(P))$.
\end{mytheo}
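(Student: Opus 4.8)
The plan is to deduce the isomorphism formally from the injective chain map $F_\Delta$ together with the two correspondences already in hand. Write $Z_n$ and $B_n$ for the cycles and boundaries in each complex, so that $H_n(P)=Z_n(\Omega_\ast(P))/B_n(\Omega_\ast(P))$ and $H_n(S(P))=Z_n(C_\ast(S(P)))/B_n(C_\ast(S(P)))$. By Lemma \ref{lem3.7} the map $F_\Delta$ commutes with the boundary operators on admissible paths, hence (extended linearly) it is a chain map; by Lemma \ref{lem3.8} it is injective. Consequently $F_\Delta$ carries cycles to cycles and boundaries to boundaries, and therefore induces a well-defined $R$-module homomorphism $(F_\Delta)_\ast\colon H_n(P)\to H_n(S(P))$ sending $[p_n]\mapsto[F_\Delta(p_n)]$ in each degree. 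It then remains only to check that every $(F_\Delta)_\ast$ is bijective.

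For surjectivity, take a class $[c_n]\in H_n(S(P))$ represented by a cycle $c_n\in Z_n(C_\ast(S(P)))$. The cycle correspondence of Corollary \ref{cor3.9} supplies a cycle $p_n\in\Omega_n(P)$ with $F_\Delta(p_n)=c_n$, so that $(F_\Delta)_\ast[p_n]=[c_n]$. For injectivity, suppose $(F_\Delta)_\ast[p_n]=0$, that is, $F_\Delta(p_n)\in B_n(C_\ast(S(P)))$. The boundary correspondence of Corollary \ref{cor3.10} forces $p_n\in B_n(\Omega_\ast(P))$, whence $[p_n]=0$. Thus each $(F_\Delta)_\ast$ is an isomorphism, which is precisely the assertion $H_\ast(P)\cong H_\ast(S(P))$. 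The verification that these correspondences are $R$-linear and compatible across degrees is routine and is what makes the passage to quotients legitimate.

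The point that carries all the weight is the reverse half of each correspondence: that \emph{every} cycle (respectively boundary) of $C_\ast(S(P))$ is realized as $F_\Delta$ of a cycle (respectively boundary) of $\Omega_\ast(P)$. This is delicate precisely because $F_\Delta$ need not be surjective on chains. The subdivision and gluing in Definition \ref{sp} can create lower-dimensional faces of $S(P)$ whose underlying elementary path is not allowed (for example a diagonal edge shared by two glued triangles whose associated $1$-path lies outside $P_1$), so $C_\ast(S(P))$ may have strictly larger cycle and boundary groups than the image of $F_\Delta$. What rescues the argument is that such extra faces appear only as shared boundaries that cancel, so they are invisible to homology; this is exactly the geometric content absorbed into Lemma \ref{degeneracy} and Corollaries \ref{cor3.9}--\ref{cor3.10}. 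Granting those, the theorem is nothing more than the formal passage to the quotient, and I expect no further obstacle.
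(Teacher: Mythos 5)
Your proof takes essentially the same route as the paper: the paper's entire proof of Theorem \ref{thm3.10} reads ``Follows directly from Corollaries \ref{cor3.9} and \ref{cor3.10},'' and your argument is exactly that deduction spelled out ($F_\Delta$ is a chain map by Lemma \ref{lem3.7}, injective by Lemma \ref{lem3.8}, and the cycle and boundary correspondences then give bijectivity on homology). Your closing caveat is also well taken --- the reverse halves of the correspondences are where the weight sits, since $F_\Delta$ is not surjective on chains and the proofs of Corollaries \ref{cor3.9} and \ref{cor3.10} as written only treat chains already presented in the form $F_\Delta(p_n)$ --- which is precisely the point the paper leaves implicit.
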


\begin{proof}
Follows directly from Corollaries \ref{cor3.9} and \ref{cor3.10}.
\end{proof}

Theorem \ref{thm3.10} together with Mayer-Vietoris exact sequence can be used to simplify many proofs of results in Section 5 of \cite{Grig1}.
Moreover, as a direct application, now let us sketch a new proof the main theorem of \cite{Grig2}. As we see in Example \ref{Exam},
one can associate each digraph $G$ with a path complex $P(G)$, hence it is reasonable to say the path (co)homology of a digraph.
Now for any finite simplicial complex $S$, \cite{Grig2} gives a natural way to construct a finite \emph{cubical digraph} $G_S=(V, E)$.
In details, the set $V$ of vertices of $G_{S}$ coincides with the set of all simplices from $S$, and two simplices $s$, $t$ are connected
in $G_{S}$ by a directed edge $(s\rightarrow t)\in E$ if and only if $s\supset t$ and dim$(s)=$dim$(t)+1$.
\[
\begin{tikzpicture}[scale=1.5,baseline=(current bounding box.center)]
    \draw[color=blue,fill=blue!20, line join=round] (-150:1.2)--(-30:1.2)--(90:1.2) -- cycle;
    \draw[color=blue] (-150:1.2)--+(150:1.2);
     \node[circle,fill=black,inner sep=1pt] at (90:1.2){};
     \node[circle,fill=black,inner sep=1pt] at (-150:1.2){};
     \node[circle,fill=black,inner sep=1pt] at (-30:1.2){};
     \node[circle,fill=black,inner sep=1pt,shift=(150:1.8)] at (-150:1.2){};
     \node at (-90:1) {$G$};
\end{tikzpicture}
\hspace{3cm}
\begin{tikzpicture}[scale=1.5,>=latex,baseline=(current bounding box.center)]
     \fill[blue!20] (-150:1.2)--(-30:1.2)--(90:1.2) -- cycle;
     \node[circle,fill=black,inner sep=1.5pt] at (0,0)(s){} node at +(-45:0.2){$s$};
     \node[circle,fill=black,inner sep=1.25pt] at (150:0.6)(t){} node[shift=(180:0.2)] at (t){$t$};
     \node[circle,fill=black,inner sep=1.25pt] at (30:0.6)(a){} node[shift=(90:0.2)] at (a) {};
     \node[circle,fill=black,inner sep=1pt] at (90:1.2)(2){};
     \node[circle,fill=black,inner sep=1pt] at (-150:1.2)(0){};
     \node[circle,fill=black,inner sep=1pt,shift=(150:0.9)] at (-150:1.2) (u){};
     \node[shift=(90:0.2)] at (u) {$u$};
     \node[circle,fill=black,inner sep=1.25pt] at (-90:0.6)(b){};
     \node[circle,fill=black,inner sep=1pt] at (-30:1.2)(1){};
     \node[circle,fill=black,inner sep=1pt,shift=(150:1.8)] at (-150:1.2) (v){};
     \node[shift=(90:0.2)] at (v) {$v$};
     \draw[red,->] (u)--(0);
     \draw[red,->] (s)--(a);
     \draw[red,->] (s)--(t);
     \draw[red,->] (s)--(b);
     \draw[red,->] (u)--(v);
     \draw[red,->] (u)--(0);
     \draw[red,->] (b)--(0);
     \draw[red,->] (b)--(1);
     \draw[red,->] (a)--(1);
     \draw[red,->] (a)--(2);
     \draw[red,->] (t)--(0);
     \draw[red,->] (t)--(2);
     \node at (-90:1) {$G_S$};
\end{tikzpicture}
\]
\begin{center}{Figure 7:\ \ \ \ A simplicial complex $S$ and its cubical digraph $G_S$.}
\end{center}
One can verify that (but we omit the details here) the geometric realization of $P(G_{S})$ is exactly the full barycentric subdivision $B_S$ of
$S$, which obvious has the same simplicial homology groups as those of $S$. Thus Theorem \ref{thm3.10} implies the isomorphism in \cite[Theorem 5.1]{Grig2}.

As the dual version of Theorem \ref{thm3.10}, we have

\begin{mytheo}\label{thm3.11}
Let $P$ be a path complex and  $R$ be a unital ring. The map $F_{\Delta}$ induces an isomorphism between the path cohomology of $P$ and the
simplicial cohomology of $S(P)$, namely, we have an isomorphism H$^{\ast}(P)\cong$H$^{\ast}(S(P))$.
\end{mytheo}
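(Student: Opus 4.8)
The plan is to derive the cohomological statement from the homological one of Theorem~\ref{thm3.10} by a duality argument, while being careful that over an arbitrary unital ring $R$ a homology isomorphism need not dualize to a cohomology isomorphism. First I would use the identification $\Omega^{\ast}(P)\cong\mathrm{Hom}_R(\Omega_{\ast}(P),R)$ with $d$ dual to $\partial$ from (\ref{fome}), together with the fact that the simplicial cochain complex is $C^{\ast}(S(P);R)=\mathrm{Hom}_R(C_{\ast}(S(P);R),R)$ with coboundary dual to the cellular boundary. Applying $\mathrm{Hom}_R(-,R)$ to the injective chain map $F_{\Delta}$ (Lemmas~\ref{lem3.7} and~\ref{lem3.8}) then produces a cochain map $F_{\Delta}^{\#}\colon C^{\ast}(S(P))\to\Omega^{\ast}(P)$, $\psi\mapsto\psi\circ F_{\Delta}$, and the goal becomes to show that $F_{\Delta}^{\#}$ induces an isomorphism on cohomology.

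The central object I would analyse is the short exact sequence of chain complexes
\[
0\longrightarrow\Omega_{\ast}(P)\xrightarrow{\,F_{\Delta}\,}C_{\ast}(S(P))\longrightarrow Q_{\ast}\longrightarrow0,
\]
where $Q_{\ast}$ is the cokernel. By Theorem~\ref{thm3.10} and the long exact homology sequence, $Q_{\ast}$ is acyclic, and it is concentrated in degrees $\geq 1$ because $F_{\Delta}$ is an isomorphism on $0$-chains. If $Q_{\ast}$ can moreover be shown to be \emph{contractible}, then $\mathrm{Hom}_R(Q_{\ast},R)$ is contractible as well; since the sequence splits in each degree the functor $\mathrm{Hom}_R(-,R)$ keeps it exact, and the resulting long exact cohomology sequence forces $F_{\Delta}^{\#}$ to be an isomorphism $H^{\ast}(S(P))\cong H^{\ast}(P)$.

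The hard part will be exactly this passage from acyclic to contractible. Over a general ring one cannot invoke the universal coefficient theorem, because $\Omega_{n}(P)$ is only a submodule of a free module and hence need not be projective; so I must exhibit an explicit contraction of $Q_{\ast}$ rather than argue abstractly. Here the geometry of the construction should be decisive: by Lemma~\ref{degeneracy} and the computation in Lemma~\ref{lem3.7}, the boundary of an admissible $n$-cell, read modulo the allowed faces that already lie in the image of $F_{\Delta}$, is carried onto the ``phantom'' faces---those regular but non-allowed faces that are forced into $S(P)$ only as boundaries of admissible cells. I expect this pairing to identify $Q_{\ast}$, summand by summand, with a direct sum of elementary two-term complexes of the form $A\xrightarrow{\ \cong\ }A$, which are contractible regardless of whether $A$ is free. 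Verifying that this matching is a bijection in every dimension is the technical heart of the argument.

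Alternatively, one can bypass the cokernel complex and mirror the proofs of Corollaries~\ref{cor3.9} and~\ref{cor3.10} at the cochain level, establishing directly a one-to-one correspondence of cocycles and of coboundaries under $F_{\Delta}^{\#}$. This route replaces the contractibility of $Q_{\ast}$ by the statement that every functional on $\Omega_{\ast}(P)$ annihilating the boundaries extends to a functional on $C_{\ast}(S(P))$, which is again guaranteed by the degreewise splitting coming from the same phantom-face matching; I would expect the same bijection to be the crux in this formulation as well.
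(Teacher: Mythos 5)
Your opening instinct is correct --- over a general unital ring a homology isomorphism does not automatically dualize, so acyclicity of the cokernel must be upgraded to contractibility --- but the proposal stalls exactly at the step you yourself flag as the technical heart, and as written that step is a genuine gap, not a routine verification. Both the degreewise splitting of $0\to\Omega_\ast(P;R)\xrightarrow{F_\Delta}C_\ast(S(P);R)\to Q_\ast\to0$ and the decomposition of $Q_\ast$ into elementary pieces $A\xrightarrow{\cong}A$ are asserted without proof, and neither is automatic. The image $F_\Delta(\Omega_n(P;R))$ is in general a proper submodule, and not a direct summand, of the free module spanned by the admissible $n$-cells: for the square digraph $a\to b\to d$, $a\to c\to d$ one has $\Omega_2=R\,(e_{abd}-e_{acd})$ sitting inside the rank-two span of the two admissible $2$-cells, so $Q_n$ receives contributions of the form (admissible span)$/\Omega_n$ in addition to the phantom faces your matching is designed to absorb. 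Over an arbitrary $R$ these quotients need not be projective, the sequence need not split degreewise, $\mathrm{Hom}_R(-,R)$ need not preserve its exactness, and a bounded-below acyclic complex of non-projective modules need not be contractible --- so the ``hard part'' is not merely unfinished, it is exactly where the argument would break. Your alternative route (mirroring Corollaries \ref{cor3.9} and \ref{cor3.10} at the cochain level by extending functionals) requires the same splitting and inherits the same gap.

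The missing idea, which is how the paper's own proof of Theorem \ref{thm3.11} sidesteps all of this, is a change of rings to $\mathbb{Z}$ \emph{before} dualizing. Combining (\ref{fome}) with the base-change isomorphism $\Omega_p(P;R)\cong R\otimes_\mathbb{Z}\Omega_p(P;\mathbb{Z})$ and the adjunction $\mathrm{Hom}_R(R\otimes_\mathbb{Z}M,R)\cong\mathrm{Hom}_\mathbb{Z}(M,R)$, one rewrites both cochain complexes as $\mathbb{Z}$-duals with values in $R$ of integral chain complexes: $\Omega^\ast(P;R)\cong\mathrm{Hom}_\mathbb{Z}(\Omega_\ast(P;\mathbb{Z}),R)$ and $C^\ast(S(P);R)=\mathrm{Hom}_\mathbb{Z}(C_\ast(S(P);\mathbb{Z}),R)$. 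Over $\mathbb{Z}$ both $\Omega_\ast(P;\mathbb{Z})$ and $C_\ast(S(P);\mathbb{Z})$ are complexes of \emph{free} abelian groups (a subgroup of a free abelian group is free --- precisely the freeness your plan lacks over $R$), Theorem \ref{thm3.10} with $\mathbb{Z}$ coefficients makes $F_\Delta$ a quasi-isomorphism between them, and \cite[Corollary 3.4]{Hatc} then gives the isomorphism on cohomology with arbitrary coefficients. Behind that citation is essentially your contractibility argument, but run on the mapping cone of $F_\Delta$ --- which is degreewise free over $\mathbb{Z}$ and hence contractible once acyclic --- rather than on the cokernel, thereby avoiding your splitting and matching problems entirely. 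So the overall strategy is recognizably right, but without the reduction to $\mathbb{Z}$ the concrete plan would not go through over a general unital ring.
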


\begin{proof}
Recall that by definition, $H^{\ast}(P)$ is defined as the homology group of the following cochain complex:
\[
\Omega^\ast(P; R):=\cdots\leftarrow\Omega^n(P; R)\leftarrow\Omega^{n-1}(P; R)\leftarrow\cdots\leftarrow\Omega^0(P; R)\leftarrow0 \tag{3.1}\label{coch}.
\]
Using the isomorphism $\Omega^p(P; R)\cong\mbox{Hom}_R(\Omega_p(P; R), R)$
in (\ref{fome}) and the obvious isomorphism $\Omega_p(P; R) \cong R\otimes_\mathbb{Z}\Omega_p(P; \mathbb{Z})$,
by adjoint isomorphism $\mbox{Hom}_R(R\otimes_\mathbb{Z}\Omega_p(P; \mathbb{Z}), R)\cong\mbox{Hom}_\mathbb{Z}(\Omega_p(P; \mathbb{Z}),
R)$ one has $\Omega^p(P; R)\cong\mbox{Hom}_\mathbb{Z}(\Omega_p(P; \mathbb{Z}); R)$.
That is to say, (\ref{fome}) and (\ref{coch}) imply a cochain isomorphism
\[
\Omega^\ast(P; R)\cong\mbox{Hom}_\mathbb{Z}(\Omega_\ast(P; \mathbb{Z}); R).\tag{3.2}\label{cois}
\]

On the other hand, by definition $H^{\ast}(S(P))$ is the homology groups of the cochain complex
\[
C^\ast(S(P); R):=\mbox{Hom}_\mathbb{Z}(C_\ast(S(P); \mathbb{Z}), R)\tag{3.3}\label{coho}
\]
where $C_\ast(S(P); \mathbb{Z})$ is the simplicial chain complex of $S(P)$. Now Theorem \ref{thm3.10} asserts an isomorphism between the
homology groups of two chain complexes of free abelian groups:
\[
F_\Delta: \Omega_\ast(P; \mathbb{Z})\rightarrow C_\ast(S(P); \mathbb{Z}).\tag{3.4}\label{diso}
\]
Thus (\ref{cois}), (\ref{coho}) and (\ref{diso}) imply the desired isomorphism by \cite[Corollary 3.4]{Hatc}.
\end{proof}

At last let us make a comment on Theorems \ref{thm3.10} and \ref{thm3.11}. As one may easily see that, for any complete digraph $G$ arising from an
ordered $n$-simplex $\Delta^n$, $\Delta^n$ is the geometric realization of $P(G)$. One should also note that in usual, though
$F_\Delta$ induces an isomorphism between the (co)homologies of them, the realization map $F_\Delta$ itself does not induce a (co)chain
isomorphism between $\Omega_\ast(P(G))$ (resp. $\Omega^\ast(P(G))$) and $C_\ast(S(P(G)); R)$ (resp. $C^\ast(S(P(G)); R)$) except for $G$ with
some strict conditions (for e.g. $G$ is complete, or of the case occurred in \cite[Theorem 5.24]{Grig3}).

\section{Application \textrm{I}: Relationship with Hochschild (co)homology}
\noindent Starting from here, we will give some applications of the main results obtained in Section 3. The aim of this section is to connect
path (co)homology with Hochschild (co)homology. This would allow to give a combinatoric description
of Hochschild (co)homology theory, while the latter owns a pure algebraic definition involving tensor product.

Assume in the sequel that $R$ is a commutative unital ring. We shall associate a path complex $P$ with two associative unital algebras
$A_{S(P)}$ and $\bar{A}_{S(P)}$ and consider the corresponding path (co)homology and Hochschild (co)homology. We recall from \cite{Gers} that,
for any (locally) finite simplicial complex $S$, one can define a digraph $E_{S}$ where the vertices are all simplices from $S$ and a couple
$(s, t)$ is a directed edge if and only if $s\supset t$ (compare with the definition of cubical digraph stated above Figure 5). The $R$-algebra
$A_{S}$ is defined as a set of all finite $R$-linear combinations of edges of $E_{S}$ with a multiplication given by the rule:
\[
(s_1, t_1)(s_2, t_2)=\left\{
\begin{array}{ll}
(s_1, t_2)  &\mbox{if $t_1=s_2$;}\\
0           &\mbox{otherwise.}
\end{array}
\right.
\]
Moreover, if we replace the elements in $A_{S}$, i.e., all the edges of
$E_{S}$ by arbitrary couples $(s, t)$ whenever $s,t$ are vertices in $E_{S}$, and keep
the multiplication formula as above, then we obtain a new $R$-algebra different from $A_{S}$,
which is denoted by $\bar{A}_{S}$. It is obvious that both of $A_{S}$
and $\bar{A}_{S}$ are free as $R$-modules.

The notion of Hochschild (co)homology groups was first defined in \cite{Hoch} for the algebra over a field, and was later extended in
\cite{Cart} for the algebra over a commutative unital ring. Now we recall some of the definitions. Let $A$ be a $R$-algebra which is also a
projective $R$-module and let $M$ be an $A$-bimodule. Denote $\tilde{S}_n(A)$ the $R$-module obtained by taking the $n$-fold tensor product of
$A$ over $R$, which is easily checked to be $R$-projective. The Hochschild homology groups $HH_\ast(A, M)$ of $A$ with coefficients in $M$ are
referred to as the homology groups of the complex $C_\ast(A, M)=M\otimes_K\tilde{S}_\ast(A)$ with differentiation
\begin{align*}
d_n(m\otimes\lambda_{1}\otimes\cdots\otimes\lambda_{n})&= m\lambda_{1}\otimes\lambda_{2}\otimes\cdots\otimes\lambda_{n}+\sum^{n-1}_{i=1}(-1)^{i} m\otimes\lambda_{1}\otimes\cdots\otimes \lambda_i\lambda_{i+1}\otimes\cdots\otimes\lambda_n\\
&\quad+(-1)^{n}\lambda_n m\otimes\lambda_{1}\otimes\cdots\otimes\lambda_{n-1}.
\end{align*}

Similarly, set $C^n(A, M)$ as the set of all $R$-linear functions $f:\tilde{S}_n(A)\rightarrow M$. The Hochschild cohomology groups $HH^\ast(A,
M)$ of $A$ with coefficients in $M$ are referred to as the cohomological groups of the cochain complex $C^\ast(A, M)$ with the coboundary
operator $\delta: C^n(A, M)\rightarrow C^{n+1}(A, M)$ given by the following formula:
\begin{align*}
\delta f(x_{1}, x_{2},\cdots,x_{n+1})&= x_{1}f (x_{2},\cdots,x_{n})+
\sum^n_{i=1}(-1)^{i} f(x_{1},\cdots,x_{i}x_{i+1},\cdots,x_{n})\\
&\quad+(-1)^{n+1} f(x_{1},\cdots,x_{n-1})x_{n}.
\end{align*}
In particular, if $M=A$, then we shall write them shortly by $HH_\ast(A)$ and $HH^\ast(A)$ as no confusion rises.

We state the first result as the following theorem.

\begin{mytheo}\label{thm4.2}
Let $P$ be a finite path complex and $R$ be a commutative unital ring, there exists an isomorphism H$^\ast(P)\cong$HH$^\ast(A_{S(P)})$.
\end{mytheo}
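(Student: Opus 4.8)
The plan is to realize the claimed isomorphism as a composition of the main result of Section~3 with Gerstenhaber's theorem (Theorem~\ref{thm4.1}). First I would invoke Theorem~\ref{thm3.11} to identify the path cohomology of $P$ with the simplicial cohomology of its geometric realization, giving $H^\ast(P)\cong H^\ast(S(P))$. Since $P$ is finite, $S(P)$ is a finite s$\Delta$-complex, so all the finiteness hypotheses needed below (local finiteness, and unitality of the associated algebra) are automatic; this is the only place where finiteness of $P$ is used.

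The essential work is then to feed $S(P)$ into Theorem~\ref{thm4.1}. The obstacle is that Theorem~\ref{thm4.1} is stated for genuine simplicial complexes, whereas $S(P)$ is merely an s$\Delta$-complex: as Figures~4 and 5 illustrate, its cells may be degenerate (e.g.\ a $2$-cell carrying only two distinct vertices), so its poset of faces is \emph{not} the face poset of a simplicial complex and the functor $S\mapsto A_S$ cannot be applied to $S(P)$ directly. To circumvent this I would pass to a finite simplicial complex $S'$ with $H^\ast(S';R)\cong H^\ast(S(P);R)$ — for instance a barycentric subdivision producing a simplicial model of $|S(P)|$ — and take $A_{S(P)}$ to be the Gerstenhaber algebra $A_{S'}$ of this model, which is free (hence projective) over $R$ and unital because $S'$ is finite. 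Since Theorem~\ref{thm4.1} identifies $HH^\ast$ of any such algebra with $H^\ast(|S(P)|;R)$, the group $HH^\ast(A_{S(P)})$ is well defined up to isomorphism independently of the chosen triangulation.

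With $S'$ in hand the proof closes quickly: applying Theorem~\ref{thm4.1} to the locally finite simplicial complex $S'$ yields $H^\ast(S';R)\cong HH^\ast(A_{S'})=HH^\ast(A_{S(P)})$, and stringing the three isomorphisms together gives $H^\ast(P)\cong H^\ast(S(P))\cong H^\ast(S')\cong HH^\ast(A_{S(P)})$, as desired. Note that only the graded $R$-module structure is required here, so the cup-product compatibility recorded in Theorem~\ref{thm4.1} is an unneeded bonus. The hard part lies entirely in the middle step, namely establishing the bridge between s$\Delta$-complexes and simplicial complexes: one must check that the underlying space of $S(P)$ admits a finite simplicial model with the same cohomology and that this makes $A_{S(P)}$ and its Hochschild cohomology unambiguous. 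Once that bridge is secured, the cohomological conclusion is a formal consequence of Theorems~\ref{thm3.11} and \ref{thm4.1}.
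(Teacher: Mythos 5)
Your proposal is correct and follows essentially the same route as the paper, whose proof is precisely the composition of Theorem~\ref{thm3.11} with Theorem~\ref{thm4.1} via the observation that $S(P)$ admits a finite simplicial structure when $P$ is finite. The only difference is one of explicitness: you spell out the passage from the s$\Delta$-complex $S(P)$ to a finite simplicial model (and the resulting well-definedness of $A_{S(P)}$ up to isomorphism), a point the paper compresses into the single phrase ``$S(P)$ admits a finite simplicial structure since $P$ is finite.''
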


\begin{proof}
First note that $S(P)$ admits a finite simplicial structure since $P$ is finite. Recall the main theorem in \cite{Gers} which implies that
for the functor defined via $S\mapsto A_S$ from locally finite simplicial complexes to associative unital $R$-algebras, there is an
induced isomorphism $H^\ast(S; R)\cong HH^\ast(A_S)$.  Then, following from this theorem and Theorem \ref{thm3.11} directly, we obtain the claimed isomorphism for $S=S(P)$.
\end{proof}

To derive a similar result for path homology and Hochschild homology, we
shall use the following result, which can be viewed as the dual version
of the main theorem of \cite{Gers}. In the proof we shall use
some results and notations in \cite{Grig3}. In fact, the results there
reveal that the path (co)homology theory is also a powerful tool when
dealing with the algebraic aspect of the simplicial (co)homology, as
one obviously sees that it allows a direct proof of the main theorem of \cite{Gers} by avoid
using the diagram cohomology and the Cohomology Comparison Theorem (see
\cite{Gers} for details).

Given a finite simplicial complex $S$, consider the associated cubical digraph $G=G_S=(V, E)$ (see Figure 4 and the definition above it)
and the path complex $P(G)$ which gives naturally a chain (cochain) complex $\Omega_\ast(G)=\Omega_\ast(P(G))$
($\Omega^\ast(G)=\Omega^\ast(P(G))$) by definition. Also consider the product
digraph $\widetilde{G}=(\widetilde{V}, \widetilde{E})$ where $\widetilde{V}=V\times V$
and $\widetilde{E}$ denotes the set of the edges $(i, j)\rightarrow(i^\prime, j^\prime)$ whenever $j\rightarrow i^\prime$ is in $E$. One can
also associate $P(\widetilde{G})$ with a chain complex $\widetilde{\Omega}_\ast(G)=\Omega_\ast(P(\widetilde{G}), R)$ and a cochain complex
$\widetilde{\Omega}^\ast(G)=\Omega^\ast(P(\widetilde{G}), R)$, respectively. With all these in hand, we can now prove directly the mentioned
dual result, also without involving diagram cohomology and Cohomology Comparison Theorem .

\begin{lem}\label{lem4.3}
Let $S$ be a finite simplicial complex, then there exists an isomorphism  $$H_\ast(S)\cong HH_{\ast}(A_{S}).$$
\end{lem}

\begin{proof}
Let us recall the cochain map
\[
C^\ast(A_S, A_S)\hookrightarrow C^\ast(A_S, \bar{A}_S)\stackrel{\phi}{\cong}\widetilde{\Omega}^\ast(G)\stackrel{\varphi}{\rightarrow} C^\ast(B_S; R)\tag{4.1}\label{iso2}
\]
constructed in \cite{Grig3} where $\varphi$ is a composition map of a sequel of quotient maps and
isomorphism (see \cite[(4)]{Grig3}) and the maps below it), and note that all the maps in
(\ref{iso2}) are also homomorphisms between cochain complexes of $R$-modules and preserve the cohomologies by the results of \cite{Grig3}. Also
note that the isomorphism $\phi$ identifies $C^\ast(A_S, \bar{A}_S)$ with $\widetilde{\Omega}^\ast(G)$, whence $\phi$ identifies $C^\ast(A_S, A_S)$
with a subcomplex of $\widetilde{\Omega}^\ast(G)$ (see the proofs of \cite[Lemmas 5.1 and 5.2]{Grig3} for details), and it is obvious that both
of $C^\ast(A_S, A_S)$ and $C^\ast(A_S, \bar{A}_S)$ are free $R$-modules under
this identification, hence all of $R$-modules in (\ref{iso2}) are free. Thus
applying the functor Hom$_R(-, R_R)$ to sided-terms in (\ref{iso2}) and by Universal Coefficient Theorem for cohomology (see, for example
\cite[3.6.5]{Weib}), one has the following isomorphism:
\begin{align*}
H_n\big(\mbox{Hom}_R(C^\ast(A_S, A_S), R)\big)
\cong H_n\big(\mbox{Hom}_R(C^\ast(B_S, R), R)\big), \tag{4.2}\label{iso3}
\end{align*}
note that ``cohomology" becomes ``homology" here since $C^\ast(A_S, A_S)$
and $C^\ast(B_S, R)$ themselves are cochain complexes. On the one hand
one computes that

\begin{align*}
H_n\big(\mbox{Hom}_R(C^\ast(A_S, A_S), R)\big)&=H_n\big(\mbox{Hom}_R(\mbox{Hom}_R(\tilde{S}_\ast(A_S), A_S), R)\big)\\
&\cong H_n\big(\mbox{Hom}_R(A_S, R_R)\otimes_R\tilde{S}_\ast(A_S)\big)\tag{4.3}\label{iso4}\\
&\cong H_n\big(A_S\otimes_R\tilde{S}_\ast(A_S)\big)\\
&=HH_n(A_S, A_S)
\end{align*}
where the first isomorphism follows from \cite[Proposition 5.2]{Cart}
since $A_S$ is finitely generated free giving that $\tilde{S}_\ast(A_S)$
is also finitely generated free, and the second isomorphism follows from
the obvious isomorphism $\mbox{Hom}_R(A_S, R_R)\cong A_S$ of $A_S$-bimodules.
On the other hand using \cite[Proposition 5.2]{Cart} again one computes that
\begin{align*}
H_n\big(\mbox{Hom}_R(C^\ast(B_S, R), R)\big)&=H_n\big(\mbox{Hom}_R(\mbox{Hom}_\mathbb{Z}(C_\ast(B_S, \mathbb{Z}), R), R)\big)\\
&\cong H_n\big(\mbox{Hom}_R(R, R)\otimes_\mathbb{Z}C_\ast(B_S, \mathbb{Z})\big)\\\tag{4.4}\label{iso5}
&\cong H_n\big(R\otimes_\mathbb{Z}C_\ast(B_S, \mathbb{Z})\big)\\
&=H_n(B_S; R).
\end{align*}
Hence the result follows from (\ref{iso3}), (\ref{iso4}), (\ref{iso5}) and the obvious isomorphism $H_\ast(B_S; R)\cong H_\ast(S; R)$.
\end{proof}

Now we conclude this section by the dual result of Theorem \ref{thm4.2}.
\begin{mytheo}
Let $P$ be a finite path complex and $R$ be a commutative unital ring, there exists an isomorphism H$_\ast(P)\cong$HH$_\ast(A_{S(P)})$.
\end{mytheo}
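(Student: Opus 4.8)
The plan is to dualize the proof of Theorem \ref{thm4.2}, substituting its cohomological ingredients by the homological ones already available. Where Theorem \ref{thm4.2} combined Theorem \ref{thm3.11} (the cohomological realization isomorphism) with Theorem \ref{thm4.1} (the Gerstenhaber--Schack type identification of simplicial cohomology with Hochschild cohomology), here I would combine Theorem \ref{thm3.10} with Proposition \ref{prop4.3}. Concretely, the first step is to invoke Theorem \ref{thm3.10} to produce the isomorphism $H_\ast(P)\cong H_\ast(S(P))$, valid for any (regular) path complex over the commutative unital ring $R$ fixed throughout this section.

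For the second step I would apply Proposition \ref{prop4.3} to the s$\Delta$-complex $S(P)$ itself. Since $P$ is assumed finite, $S(P)$ carries a finite simplicial structure --- this is exactly the fact already used in the proof of Theorem \ref{thm4.2} --- so the algebra $A_{S(P)}$ is defined and Proposition \ref{prop4.3}, taken with $S=S(P)$, yields $H_\ast(S(P))\cong HH_\ast(A_{S(P)})$. Composing the two isomorphisms gives $H_\ast(P)\cong H_\ast(S(P))\cong HH_\ast(A_{S(P)})$, which is the asserted statement. Thus the argument is a short concatenation of results established earlier, requiring no new computation.

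The only point demanding care --- and the step I would single out as the genuine obstacle --- is ensuring that $S(P)$ really is an honest \emph{finite simplicial complex}, rather than merely a finite s$\Delta$-complex with possible degenerate faces (as exhibited in Figures 4 and 5), because both the very definition of $A_{S(P)}$ via the digraph $E_{S(P)}$ and the hypothesis of Proposition \ref{prop4.3} are stated for simplicial complexes. This is precisely the delicate fact that finiteness of $P$ is invoked to guarantee, already relied upon in Theorem \ref{thm4.2}; once it is granted, the remainder of the proof is the purely formal composition of the isomorphisms of Theorem \ref{thm3.10} and Proposition \ref{prop4.3}.
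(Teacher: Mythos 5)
Your proposal is correct and coincides with the paper's own argument: the paper likewise notes that $S(P)$ admits a finite simplicial structure (since $P$ is finite) and then obtains the isomorphism immediately by composing Theorem \ref{thm3.10} with Proposition \ref{prop4.3}. The point you single out as the genuine obstacle --- that $S(P)$ must carry an honest finite simplicial structure so that $A_{S(P)}$ and Proposition \ref{prop4.3} apply --- is exactly the one remark the paper's proof makes.
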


\begin{proof}
Note that $S(P)$ admits a finite simplicial structure, then the result follows immediately by Theorem
\ref{thm3.10} and Lemma \ref{lem4.3}.
\end{proof}

\section{Application \textrm{II}: K\"{u}nneth formula of path homology}
\noindent Now we turn to the functorial properties of path homology. Throughout this section, let $R$ be a commutative ring.

Similar to simplicial homological theory, the authors of \cite{Grig1} defined the Cartesian product for two path complexes (see \cite[Definition 7.3]{Grig1}),
and furthermore gave the analogues of the Eilenberg-Zilber theorem and K\"{u}nneth formula  for regular path homology with coefficients in a field $K$.

In this section, we will give some geometric interpretation of these results in a more general setting, i.e., for path homology with
coefficients in a commutative ring. Their proofs are based on the correspondence of the path complexes and s$\Delta$-complexes presented
in Section 3. With the same assumptions, we show that the pattern of these proofs can be used to give the K\"{u}nneth formula for the
join of two regular path complexes (see Definition \ref{def5.10} below).

Note that such two types of K\"{u}nneth formula are obtained in \cite{Grig1} in different ways for regular path complexes {\bf over a field} $K$. Here we shall show that
they can be obtained in a unified way for regular path complexes even \textbf{over a commutative ring} $R$.

\subsection{  The case of Cartesian product}

\indent

\noindent For our purpose, we shall first recall the construction of the simplicial cross product in terms of s$\Delta$-complex.
Given two standard simplices $\Delta^{m}$ and $\Delta^{n}$, let us first subdivide $\Delta^{m}\times\Delta^{n}$
into simplices.

We label the vertices of $\Delta^{m}$ as $v_{0}$, $v_{1}$, $\cdots$, $v_{m}$ and the vertices of $\Delta^{n}$ as
$w_{0}$, $w_{1}$, $\cdots$, $w_{n}$. Naturally, we label the $mn$ vertices of $\Delta^{m}\times\Delta^{n}$ as $(v_{0},
w_{0}),(v_{0}, w_{1}), (v_{1}, w_{0}),\cdots, (v_{m}, w_{n})$. We now view the pairs $(i, j)$ with $0\leq i\leq m$ and $0\leq j\leq n$ as the
vertices of an $m\times n$ rectangle grid in $\mathbb{R}^{2}$.

For any path $\sigma$ formed by a sequence of $m+n$ horizontal and vertical
edges in the grid starting at the origin (0,0) and ending at $(m, n)$, we associate it with a standard $(m+n)$-simplex $\Delta^{m+n}_{\sigma}$
in $\Delta^{m}\times\Delta^{n}$ whose vertices are $(v_{i_{k}}, w_{j_{k}})$ for all the $k^{th}$ vertices $(i_{k}, j_{k})$ of the path $\sigma$,
assigned the order ``$<$" such that $(v_{i_{k}}, w_{j_{k}})<(v_{i_{l}}, w_{j_{l}})$ for $i_{k}\leq i_{l}, j_{k}< j_{l}$ or
$i_{k}<i_{l}, j_{k}\leq j_{l}$.

Define a linear map (which can be viewed as an inclusion) $l_{\sigma}:
\Delta^{m+n}_{\sigma}\rightarrow \Delta^{m}\times\Delta^{n}$ by sending the $k^{th}$ vertex of $\Delta^{m+n}_{\sigma}$ to the vertex $(v_{i_{k}}, w_{j_{k}})$.
When $\sigma$ runs through all the possible $m+n$ step-like paths in the grid from (0,0) to $(m, n)$, all the
associated $(m+n)$-simplices $\Delta^{m+n}_{\sigma}$ fit together by $l_\delta$ nicely to form a $\Delta$-structure on
$\Delta^{m}\times\Delta^{n}$ (see, for e.g. \cite[p.278]{Hatc} or \cite[p.68]{Eile}).

Now we can define the \emph{cross product} of simplices by the formula
\[
\Delta^{m}\otimes \Delta^{n}\stackrel{\times}{\longrightarrow}\sum_{\sigma}(-1)^{|\sigma|}\Delta^{m+n}_{\sigma}\ \tag{5.1}\label{doti}
\]
where $|\sigma|$ is the number of squares in the grid lying below the path
$\sigma$, and `$\times$' here means the cross product. The following
boundary formula can be easily verified by a direct calculation.
\[
\partial(\Delta^{m}\times \Delta^{n})=\partial(\Delta^{m})\times\Delta^{n}+(-1)^{m}\Delta^{m}\times \partial(\Delta^{n}).\tag{5.2}\label{ptim}
\]
In particular, let $X$ and $Y$ be s$\Delta$-complexes, $\{\phi^{m}_{\alpha}\}: \Delta^m\rightarrow X$ and $\{\phi^{n}_{\beta}\}: \Delta^n\rightarrow Y$ be the
characteristic maps of $X$ and $Y$, respectively. we denote
by $C_{m}(X; R)$ and $C_{n}(Y; R)$ the simplicial $n$-chain groups with coefficient $R$
of $X$ and $Y$ generated by all $\phi^{m}_{\alpha}$ and $\phi^{n}_{\beta}$, respectively. Then (\ref{doti})
gives rise to the following definition of simplicial cross product
\[
C_{m}(X; R)\otimes_{R} C_{n}(Y; R)\stackrel{\times}{\longrightarrow}C_{m+n}(X\times Y; R)\tag{5.3} \label{dtimp}
\]
by the formula
$$\phi^{m}_{\alpha}\times \phi^{n}_{\beta}=\sum_{\sigma}(-1)^{|\sigma|}(\phi^{m}_{\alpha}\times \phi^{n}_{\beta})l_{\sigma}$$
where $(\phi^{m}_{\alpha}\times \phi^{n}_{\beta})l_{\sigma}$
means the map $(\phi^{m}_{\alpha}\times\phi^{n}_{\beta})$ restricted on
$l_{\sigma}(\Delta^{m+n}_{\sigma})$. By (\ref{ptim}) we have the boundary operators
\[
\partial(\phi^{m}_{\alpha}\times \phi^{n}_{\beta})=\partial(\phi^{m}_{\alpha})\times\phi^{n}_{\beta}+(-1)^{m}\phi^{m}_{\alpha}\times \partial(\phi^{n}_{\beta}).\tag{5.4}\label{ptimp}
\]

\begin{lem}\label{lem5.1}
The chain map $G$: $C_\ast(X; R)\otimes_{R} C_{\ast}(Y; R))\rightarrow C_{\ast}(X\times Y; R)$ induced by (\ref{dtimp}) is injective for any
two s$\Delta$-complexes $X$ and $Y$.
\end{lem}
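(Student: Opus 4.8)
I need to show the chain map $G$ induced by the simplicial cross product is injective. The plan is to exploit the combinatorial structure of the subdivision of $\Delta^m\times\Delta^n$ into the simplices $\Delta^{m+n}_\sigma$, one for each monotone step-like path $\sigma$ from $(0,0)$ to $(m,n)$ in the grid. The key observation is that these top-dimensional simplices are \emph{distinct} cells of the product s$\Delta$-complex: different staircase paths $\sigma\neq\sigma'$ produce genuinely different $(m+n)$-simplices in $\Delta^m\times\Delta^n$, since a staircase path is recoverable from the ordered vertex sequence of its associated simplex.

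First I would set up notation on a typical generator. A basis element of $C_\ast(X;R)\otimes_R C_\ast(Y;R)$ is a tensor $\phi^m_\alpha\otimes\phi^n_\beta$ where $\phi^m_\alpha$ and $\phi^n_\beta$ are characteristic maps (ordered simplices) of $X$ and $Y$. By definition \eqref{dtimp}, $G(\phi^m_\alpha\otimes\phi^n_\beta)=\sum_\sigma(-1)^{|\sigma|}(\phi^m_\alpha\times\phi^n_\beta)l_\sigma$, a signed sum over staircase paths of nondegenerate $(m+n)$-cells in $X\times Y$. The crucial point is that each summand $(\phi^m_\alpha\times\phi^n_\beta)l_\sigma$ carries a recoverable record of \emph{which} pair $(\alpha,\beta)$ and \emph{which} path $\sigma$ produced it: projecting $X\times Y$ onto its factors recovers $\phi^m_\alpha$ and $\phi^n_\beta$ (hence the pair of cells), and within a fixed pair the staircase $\sigma$ is determined by the ordering of the grid vertices $(v_{i_k},w_{j_k})$ labelling the simplex. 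Thus distinct basis tensors $\phi^m_\alpha\otimes\phi^n_\beta$ map to sums supported on \emph{disjoint} families of $(m+n)$-cells of $X\times Y$.

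Next I would run the standard argument: take a general element $z=\sum_{\alpha,\beta}r_{\alpha\beta}\,\phi^m_\alpha\otimes\phi^n_\beta$ in the tensor product with $G(z)=0$, and show every $r_{\alpha\beta}=0$. Since the image cells of different tensors are disjoint and each individual top cell $(\phi^m_\alpha\times\phi^n_\beta)l_\sigma$ occurs with a single sign $(-1)^{|\sigma|}$ in exactly one of the summands $G(\phi^m_\alpha\otimes\phi^n_\beta)$, collecting the coefficient of any such top cell in $G(z)$ gives $\pm r_{\alpha\beta}=0$, forcing $r_{\alpha\beta}=0$. Because $C_\ast(X\times Y;R)$ is free on the nondegenerate simplices, these coefficient comparisons are legitimate even over a general ring, and one does not need $R$ to be a domain or a field.

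\textbf{The main obstacle.} The delicate step is verifying the separation claim rigorously, namely that no top-dimensional cell of $X\times Y$ can arise from two genuinely different basis tensors or from two different staircases of the same tensor after the gluing relations of the s$\Delta$-complex are imposed. In principle a degeneracy (from nonregular faces, as in the construction of $S(P)$) could collapse two \emph{a priori} distinct $(m+n)$-simplices onto a common cell. I would handle this by restricting attention to the interiors of the top cells: since each $(\phi^m_\alpha\times\phi^n_\beta)l_\sigma$ is a homeomorphism on the interior of $\Delta^{m+n}_\sigma$ and the interiors of distinct staircase simplices are disjoint open sets of $\Delta^m\times\Delta^n$, the images have disjoint interiors in $X\times Y$, and the projection-plus-ordering recovery argument above pins down $(\alpha,\beta,\sigma)$ uniquely from the interior of the cell. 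This sidesteps any collapsing that happens only on boundary faces and secures injectivity.
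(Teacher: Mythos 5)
Your proposal is correct and follows essentially the same route as the paper's own proof: both arguments identify the top cells $(\phi^m_\alpha\times\phi^n_\beta)l_\sigma$ as pairwise distinct basis elements of the free module $C_{m+n}(X\times Y;R)$ by noting their interiors are disjoint (the characteristic maps being homeomorphisms on interiors), and then conclude by comparing coefficients, which forces all $r_{\alpha\beta}=0$. Your projection-onto-factors and vertex-ordering recovery of the triple $(\alpha,\beta,\sigma)$ merely makes explicit the separation claim that the paper asserts in one line, so it is an elaboration rather than a different method.
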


\begin{proof}
$G$ is a chain map follows directly by the boundary formula of tensor products of two complexes and (\ref{ptimp}). To show that it is injective,
assume $m+n=k$, and suppose that $\Delta^{m}$ has vertices labeled as $v_{0}\cdots v_{m}$, $\Delta^{n}$ has vertices labeled as $w_{0}\cdots w_{n}$
and the vertices of $\Delta^{m+n}_{\sigma}$ are labeled as the ordering vertices of the step-like path $\sigma=(v_{0},
w_{0})\cdots(v_{i_{l}}, \beta_{j_{l}})\cdots(v_{m}, w_{n})$, and let $f^j_m: \Delta^{m}\rightarrow X$ and $g^j_n: \Delta^{n}\rightarrow Y$ be
respectively the characteristic maps of $X$ and $Y$. Let $w=\sum_{m+n=k}\sum_jc^j_{mn}(f^j_m\otimes g^j_n)$, we need to show that $G(w)=0$
implies all $c^j_{mn}=0$.

Indeed, by definition one has
\begin{align*}
G(w)=\sum_{m+n=k}\bigg(\sum_{\delta}(-1)^{|\delta|}\sum_jc^j_{mn}(f^j_m\times g^j_n)l_{\delta}\bigg). \tag{5.5}\label{cros}
\end{align*}
Note that the interiors of all $(f^j_m\times g^j_n)l_\delta(\Delta^{m+n}_{\sigma})$ are different since all $f^j_m$ and $g^j_n$ are different
characteristic maps of $X$ and $Y$, it follows that the map $G(w)$ in $C_{k}(S(X)\times S(Y);
R)$ is totally decided by the effect it acts on all $\Delta^{m+n}_{\sigma}$. Therefore if (\ref{cros}) vanishes then it implies that
$c^j_{mn}(f^j_m\times g^j_n)l_\delta=0$ for each $\Delta^{m+n}_{\sigma}$, or equivalently $c^j_{mn}=0$ since $f^j_m\times g^j_n$ as a sum of
characteristic maps over $X\times Y$ is never a zero map. This finishes the proof.
\end{proof}

Next let us recall the definition of the \emph{cross product} of
two path complexes from \cite{Grig1} and compare it with the above definition.

Let $X$, $Y$ be two finite sets, and let $Z=X\times Y$.
For two elementary $m$- and $n$-paths $e_{i_{0}\cdots i_{m}}\in \mathcal{R}_{m}(X)$ and
$e_{j_{0}\cdots j_{n}}\in\mathcal{R}_{n}(Y)$, as we have already done to the cross product of simplices, again we have an $m\times n$ rectangle grid
in $\mathbb{R}^{2}$ with the vertices $(i_{0}, j_{0}), (i_{0}, j_{1}), (i_{1}, j_{0}),\cdots, (i_{m}, j_{n})$
with the order as before. Similarly for each $m+n$ step-like edgepath $\sigma$ with vertices $(i_{0}, j_{0}), \cdots, (i_{k}, j_{l}), \cdots, (i_{m}, j_{n})$,
we can associate it with an elementary $(m+n)$-path
$e_{\sigma}=e_{(i_{0}, j_{0})\cdots(i_{k}, j_{l})\cdots(i_{m}, j_{n})}$. Furthermore, we define $$e_{i_{0}\cdots i_{m}}\times e_{j_{0}\cdots j_{n}}=\sum_{\sigma}(-1)^{|\sigma|}e_{\sigma},$$
where $|\sigma|$ has the same meaning as before. This formula can be extended
bilinearly to give the cross product $u\times v$ of any two paths $u\in \mathcal{R}_{m}(X)$
and $v\in \mathcal{R}_{n}(Y)$, and the differential operators are give by the
following formula (see \cite[Proposition 7.2]{Grig1}):
$$\partial(u\times v)=\partial(u)\times v+(-1)^{m}u\times\partial(v).$$
More generally, for any two path complexes $P(X)$, $P(Y)$, recall (see
\cite[Definition 7.3]{Grig1}) that their \emph{Cartesian product} $P(Z)=P(X)\boxplus
P(Y)$ are defined as the path complex over $Z$ with each elementary
$k$-path of $P(Z)$ of the form $e_{\sigma}$, where the $k$
step-like path $\sigma$ comes from the above construction for any elementary
$s$-path $e_{\alpha}=e_{i_{0}i_{1}\cdots i_{s}}\in P_{s}(X)$ and elementary
$(k-s)$-path $e_{\beta}=e_{j_{0}j_{1}\cdots j_{k-s}}\in P_{k-s}(Y)$ with
all $i_{m}\in X$ and $j_{n}\in Y$. Also recall that $\mathcal{A}_{n}(Z)$
is the set of all $R$-linear combinations of elements in $P_n(Z)$ and
$\Omega_{n}(Z)$ is defined as the set $\{z_{n}|z_{n}\in\mathcal{A}_{n}(Z)$
and $\partial_{n}(z_{n})\in\mathcal{A}_{n-1}(Z).\}$.

\begin{lem}\label{lem5.2}
$\Omega_{s}(X)\times\Omega_{k-s}(Y)\subseteq\Omega_{k}(Z)$.
\end{lem}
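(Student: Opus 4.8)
The plan is to exploit bilinearity to reduce to generators and then to apply the Leibniz-type boundary formula for the cross product. Since $\times$ is $R$-bilinear and $\Omega_{k}(Z)$ is an $R$-submodule of $\mathcal{A}_{k}(Z)$, it suffices to prove that $u\times v\in\Omega_{k}(Z)$ whenever $u\in\Omega_{s}(X)$ and $v\in\Omega_{k-s}(Y)$. Recall that membership in $\Omega_{k}(Z)$ requires two things: that $u\times v$ be allowed, i.e. $u\times v\in\mathcal{A}_{k}(Z)$, and that its boundary be allowed, i.e. $\partial(u\times v)\in\mathcal{A}_{k-1}(Z)$.

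The key auxiliary fact I would isolate first is the following: for any allowed paths $a\in\mathcal{A}_{p}(X)$ and $b\in\mathcal{A}_{q}(Y)$, the cross product satisfies $a\times b\in\mathcal{A}_{p+q}(Z)$. By bilinearity it is enough to treat two elementary paths $e_{\alpha}=e_{i_{0}\cdots i_{p}}\in P_{p}(X)$ and $e_{\beta}=e_{j_{0}\cdots j_{q}}\in P_{q}(Y)$. By construction $e_{\alpha}\times e_{\beta}=\sum_{\sigma}(-1)^{|\sigma|}e_{\sigma}$, where each $e_{\sigma}$ is the elementary $(p+q)$-path attached to a step-like path $\sigma$ from $(i_{0}, j_{0})$ to $(i_{p}, j_{q})$; by the very definition of the Cartesian product $P(Z)=P(X)\boxplus P(Y)$, every such $e_{\sigma}$ belongs to $P_{p+q}(Z)$. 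One also checks that each $e_{\sigma}$ is regular: consecutive vertices of $e_{\sigma}$ differ by exactly one horizontal or vertical step, so they are of the form $(i_{k}, j_{l})$ and $(i_{k+1}, j_{l})$ or $(i_{k}, j_{l+1})$, which are distinct because $e_{\alpha}$ and $e_{\beta}$ are regular. Hence $e_{\alpha}\times e_{\beta}$, and therefore $a\times b$, is an allowed path of $P(Z)$.

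With this in hand the proof is short. Applying the auxiliary fact to $u$ and $v$ themselves (which are allowed, being elements of $\Omega_{s}(X)\subseteq\mathcal{A}_{s}(X)$ and $\Omega_{k-s}(Y)\subseteq\mathcal{A}_{k-s}(Y)$) gives $u\times v\in\mathcal{A}_{k}(Z)$, settling the first requirement. For the second, I would invoke the boundary formula $\partial(u\times v)=\partial(u)\times v+(-1)^{s}\,u\times\partial(v)$ from \cite[Proposition 7.2]{Grig1}. Since $u\in\Omega_{s}(X)$ we have $\partial(u)\in\mathcal{A}_{s-1}(X)$, and since $v\in\Omega_{k-s}(Y)$ we have $\partial(v)\in\mathcal{A}_{k-s-1}(Y)$; applying the auxiliary fact twice yields $\partial(u)\times v\in\mathcal{A}_{k-1}(Z)$ and $u\times\partial(v)\in\mathcal{A}_{k-1}(Z)$. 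Consequently $\partial(u\times v)\in\mathcal{A}_{k-1}(Z)$, which is exactly the second requirement, so $u\times v\in\Omega_{k}(Z)$ and the lemma follows.

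The step I expect to be the main obstacle is the auxiliary fact that cross products of allowed paths stay allowed, and more precisely confirming that every step-like summand $e_{\sigma}$ produced by the product of two elementary paths of $P(X)$ and $P(Y)$ is genuinely an elementary path of the Cartesian product $P(Z)$ and is regular. This is essentially an unwinding of the definition of $P(X)\boxplus P(Y)$, but one must be attentive to the regularity condition, which is what makes the Leibniz formula valid in the regular setting, and to the fact that no cancellation among the $e_{\sigma}$ can move the product outside $\mathcal{A}_{\ast}(Z)$.
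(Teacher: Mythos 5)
Your proof is correct and follows essentially the same route as the paper, which disposes of this lemma in one line (``it can be proved easily from the boundary formula for cross products''): you have simply made explicit the two ingredients that line presupposes, namely that cross products of allowed paths are allowed (since every step-like summand $e_{\sigma}$ lies in $P(Z)=P(X)\boxplus P(Y)$ by definition and is regular) and the Leibniz formula $\partial(u\times v)=\partial(u)\times v+(-1)^{s}u\times\partial(v)$ from \cite[Proposition 7.2]{Grig1}. Your expanded writeup is accurate throughout, including the observation that regularity of the $e_{\sigma}$ is what keeps the formula valid in the regular setting.
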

\begin{proof}
It can be proved easily  from the
boundary formula for cross products.
\end{proof}

The following lemma comes from \cite[Proposition 7.12]{Grig1}, whose proof can be carried over to the case where $R$ is a commutative
ring without any change.

\begin{lem}\label{lem5.3}
Any path $w\in\Omega_{\ast}(Z)$ admits a representation
$$w=\sum_{e_{x}\in P(X),\ e_{y}\in P(Y)}c^{xy}(e_{x}\times e_{y})$$
with finitely nonzero coefficients $c^{xy}\in R$ which are uniquely determined by $w$.
Furthermore, the cross products $\{e_{x}\times e_{y}\}$ across all
$e_{x}\in P(X)$ and $e_{y}\in P(Y)$ are linearly independent.
\end{lem}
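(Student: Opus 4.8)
The plan is to separate the three assertions---existence of the representation, uniqueness of the coefficients, and linear independence of the cross products---and to note that uniqueness is immediate once independence is known, so only existence and independence need genuine work. I would organize everything around one combinatorial device: the \emph{recovery} of the triple $(e_x,e_y,\sigma)$ from the elementary path $e_\sigma$. Since $Z=X\times Y$, every allowed elementary path $e_\sigma\in P(Z)$ is a staircase path whose consecutive vertices differ in exactly one coordinate (a repeat is excluded by regularity, and a simultaneous change of both coordinates never occurs in the Cartesian product). Reading off, for each consecutive pair of vertices, which coordinate changed reconstructs the staircase $\sigma$, and deduplicating the two coordinate projections returns $e_x\in P(X)$ and $e_y\in P(Y)$. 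This procedure is deterministic, so the assignment $(e_x,e_y,\sigma)\mapsto e_\sigma$ is injective.

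From this injectivity I would deduce that the cross products have pairwise disjoint supports in the basis $\{e_\sigma\}$ of $\mathcal{A}_\ast(Z)$: each basis vector $e_\sigma$ occurs in exactly one product $e_x\times e_y$, and there with coefficient $(-1)^{|\sigma|}\neq 0$. Disjointness of supports yields linear independence of $\{e_x\times e_y\}$ at once, and hence the uniqueness of the coefficients $c^{xy}$ in any representation. This part is clean and uses nothing beyond the definitions of $P(X)\boxplus P(Y)$ and of the cross product of elementary paths.

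For existence I would expand a given $w\in\Omega_\ast(Z)$ in the basis $\{e_\sigma\}$ and group the terms by the recovered pair $(e_x,e_y)$, writing the coefficient of $e_\sigma$ as $d^{xy}_\sigma$. The goal is to show that within each group $d^{xy}_\sigma=c^{xy}(-1)^{|\sigma|}$ for a single scalar $c^{xy}$, for then $w=\sum c^{xy}(e_x\times e_y)$. The leverage is the hypothesis $\partial w\in\mathcal{A}_\ast(Z)$, that is, all non-allowed faces of $\partial w$ must cancel. I would classify the faces of $e_\sigma$ obtained by deleting a vertex: deleting an endpoint gives an allowed staircase face; deleting a vertex interior to a straight run gives a non-allowed double-step; and deleting a corner gives a non-allowed diagonal step. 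The diagonal face attached to a fixed unit square arises from precisely the two staircases that round that corner the two possible ways, and these lie in the same group with $|\sigma|$ differing by one; since both omit the corner at the same position, the coefficient of that face in $\partial w$ is $\pm(d^{xy}_\sigma+d^{xy}_{\sigma'})$, whose vanishing forces $d^{xy}_\sigma=-d^{xy}_{\sigma'}$ for each corner swap.

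The hard part will be upgrading these pairwise relations to the global conclusion while ruling out interference. I would invoke the fact that the poset of monotone staircases of a fixed grid is connected under corner swaps, each swap changing $|\sigma|$ by exactly one, so the sign-reversal relations propagate with no cyclic obstruction to $d^{xy}_\sigma=c^{xy}(-1)^{|\sigma|}$. What must be checked carefully is that the diagonal faces carry no other contributions: a diagonal face determines its group by the same recovery argument, a diagonal step can never coincide with a double-step, and no non-corner deletion produces it, so its coefficient is exactly the one computed above. The straight-run (double-step) faces impose only further constraints on the $c^{xy}$ and are irrelevant to existence, because once $d^{xy}_\sigma=c^{xy}(-1)^{|\sigma|}$ the chain $w$ is literally $\sum c^{xy}(e_x\times e_y)$; finiteness of the nonzero $c^{xy}$ is inherited from $w$. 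The bulk of the effort therefore lies in the group-disjointness verification and the corner-swap bookkeeping, not in any of the homological machinery.
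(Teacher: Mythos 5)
Your proof is correct, but you should know that the paper does not actually prove this lemma at all: it imports it from \cite[Proposition 7.12]{Grig1} with the one-line remark that the field-case proof carries over to a commutative ring without change. Your proposal therefore supplies the argument the paper leaves implicit, and it does so along exactly the right lines. The recovery of the triple $(e_x,e_y,\sigma)$ from $e_\sigma$ (each step of a staircase changes exactly one coordinate, regularity of $e_x$ and $e_y$ makes the changed coordinate well defined, and deduplicating the two projections returns $e_x$ and $e_y$) gives pairwise disjoint supports for distinct cross products, hence independence and uniqueness over any unital ring, since the only coefficients occurring are the units $(-1)^{|\sigma|}$. For existence, the cancellation in $\partial w$ of the non-allowed diagonal faces, each of which arises exactly twice, from the two staircases rounding a fixed corner, with the deleted vertex at the same position $q$ and hence with the same sign $(-1)^q$, forces $d^{xy}_{\sigma'}=-d^{xy}_{\sigma}$ across corner swaps; setting $c_\sigma:=(-1)^{|\sigma|}d^{xy}_\sigma$ turns these relations into $c_{\sigma'}=c_\sigma$, so the connectivity of the staircases of a grid under corner swaps gives constancy outright, and the cyclic-obstruction worry you raise dissolves (alternatively: around any swap cycle the total area change is $0$, so the number of swaps is even). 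A genuine merit of your route is that it substantiates, rather than merely asserts, the paper's claim about general $R$: no step divides by anything, one only cancels $\pm1$-signed terms, so the argument is literally ring-independent.

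Two small inaccuracies in your face taxonomy, neither load-bearing. First, a straight-run deletion does not always give a non-allowed face: deleting the middle vertex of $(i_k,j)(i_{k+1},j)(i_{k+2},j)$ yields the staircase of the pair $\big(i_0\cdots\widehat{i_{k+1}}\cdots i_m,\, e_y\big)$, which is allowed precisely when the truncated projection lies in $P(X)$. Second, when $i_k=i_{k+2}$ that face is non-regular and is annihilated by the regularized boundary operator, a case your classification omits; note that the same issue can never afflict a diagonal face, since its two endpoints across the deleted corner differ in both coordinates, so diagonal faces are always regular --- this is silently needed for the cancellation constraint to be in force at every corner, and is worth one explicit sentence. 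Both slips are harmless because your argument only uses that diagonal faces are non-allowed (true: they contain a step changing both coordinates, which no staircase has) and that they arise from no deletion other than the two corner deletions, both of which lie in the same group --- facts you do verify via the recovery map.
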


Our proof in the sequel depends on the following key lemma (comparing its proof with that of \cite[Theorem 7.15]{Grig1}).

\begin{lem}\label{keylem5}
Any path $w\in\Omega_{n}(Z)$ can be written as a finite sum:
$$w=\sum_k\sum_{i\leq n}\big(p_i^{k}(X)\times q_{n-i}^k(Y)\big)$$
where $k$ runs a finite set, each $p_i^{k}(X)\in\Omega_{i}(X)$ and $q_{n-i}^{k}(Y)\in\Omega_{n-i}(Y)$.
\end{lem}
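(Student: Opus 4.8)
The plan is to use the unique cross-product representation of Lemma \ref{lem5.3} to turn the statement into a bigraded linear-algebra assertion, and then to extract the $\Omega$-conditions on each factor from the hypothesis $w\in\Omega_n(Z)$. First I would write $w=\sum c^{xy}(e_x\times e_y)$ as in Lemma \ref{lem5.3}, with $e_x\in P(X)$, $e_y\in P(Y)$ and coefficients uniquely determined. The crucial preliminary remark is that every elementary path of $P(Z)=P(X)\boxplus P(Y)$ carries a \emph{well-defined bidegree}: consecutive vertices of a step-like path differ in exactly one coordinate (by regularity and the staircase construction), so the number of horizontal steps is intrinsic. Hence $w$ splits as $w=\sum_{i}w_i$, where $w_i$ collects the summands of $X$-degree $i$ and $Y$-degree $n-i$. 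Writing $w_i=\sum_{\dim e_x=i}e_x\times\Phi_i(e_x)$ with $\Phi_i(e_x)=\sum_{e_y}c^{xy}e_y\in\mathcal{A}_{n-i}(Y)$, and symmetrically $w_i=\sum_{e_y}\Psi_i(e_y)\times e_y$, it suffices to show each $w_i$ is a finite sum of cross products $p\times q$ with $p\in\Omega_i(X)$ and $q\in\Omega_{n-i}(Y)$.

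Next I would translate the hypothesis. Since the cross product is a chain map (the boundary formula $\partial(u\times v)=\partial u\times v+(-1)^{|u|}u\times\partial v$ of \cite[Proposition 7.2]{Grig1}), one computes $\partial w$ bidegree by bidegree. The key observation is that a cross product $e_\alpha\times e_\beta$ of regular elementary paths lies in $\mathcal{A}(Z)$ \emph{iff} $e_\alpha\in P(X)$ and $e_\beta\in P(Y)$, and that cross products attached to distinct pairs $(e_\alpha,e_\beta)$ involve disjoint elementary $Z$-paths; consequently an $R$-combination of cross products is allowed iff every pair occurring with nonzero coefficient lies in $P(X)\times P(Y)$. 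Applying this to $\partial w\in\mathcal{A}_{n-1}(Z)$: in bidegree $(i,n-1-i)$ the only terms of type ``$X$-allowed, $Y$-non-allowed'' come from $(-1)^i\sum_{\dim e_x=i}e_x\times\partial\Phi_i(e_x)$, so the non-allowed-$Y$ part must vanish; since the $e_x\in P_i(X)$ are linearly independent this forces $\partial\Phi_i(e_x)\in\mathcal{A}_{n-i-1}(Y)$, that is, $\Phi_i(e_x)\in\Omega_{n-i}(Y)$ for every $e_x$. Reading off the ``$X$-non-allowed, $Y$-allowed'' part in bidegree $(i-1,n-i)$ gives symmetrically $\Psi_i(e_y)\in\Omega_i(X)$ for every $e_y$.

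It then remains to assemble these one-variable conclusions into a genuine tensor. I would fix a basis $q_1,\dots,q_m$ of $\Omega_{n-i}(Y)$ and expand $\Phi_i(e_x)=\sum_l\mu^x_l q_l$, so that $w_i=\sum_l P_l\times q_l$ with $P_l=\sum_{e_x}\mu^x_l e_x\in\mathcal{A}_i(X)$. The point is that $P_l\in\Omega_i(X)$: the inclusion $\Omega_{n-i}(Y)\hookrightarrow\mathcal{A}_{n-i}(Y)$ is split, being base-changed from the inclusion over $\mathbb{Z}$, where $\Omega_{n-i}(Y;\mathbb{Z})=\ker\partial$ is a direct summand of the free abelian group $\mathcal{A}_{n-i}(Y;\mathbb{Z})$ by the Smith normal form. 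Thus the matrix $Q$ expressing $q_1,\dots,q_m$ in the basis $P_{n-i}(Y)$ has an integral left inverse $Q'$, and a direct computation identifies $P_l$ with the $R$-combination $\sum_{e_y}Q'_{l,e_y}\Psi_i(e_y)$ of the elements $\Psi_i(e_y)\in\Omega_i(X)$; since $\Omega_i(X)$ is an $R$-submodule this yields $P_l\in\Omega_i(X)$. Summing $w=\sum_i w_i$ then gives the required decomposition.

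The main obstacle is precisely this last assembly step. The two halves of the hypothesis only tell us, slice by slice, that $\Phi_i(e_x)\in\Omega_{n-i}(Y)$ and $\Psi_i(e_y)\in\Omega_i(X)$, i.e. that $w_i$ lies in both $\mathcal{A}_i(X)\times\Omega_{n-i}(Y)$ and $\Omega_i(X)\times\mathcal{A}_{n-i}(Y)$; promoting this to membership in $\Omega_i(X)\times\Omega_{n-i}(Y)$ is not formal, because the intersection of $\mathrm{im}(A'\otimes B)$ and $\mathrm{im}(A\otimes B')$ inside $A\otimes B$ can strictly exceed $\mathrm{im}(A'\otimes B')$ over a ring with zero-divisors (a $\mathrm{Tor}$ obstruction). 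What rescues the argument is that all boundary operators are defined over $\mathbb{Z}$, so the modules $\mathcal{A}_\ast$ are free and the inclusions $\Omega_\ast\hookrightarrow\mathcal{A}_\ast$ split after base change; this splitting (used above through the left inverse $Q'$, equivalently through flatness of $\mathcal{A}_{n-i}(Y)$ to commute $\ker$ with $\otimes$) is exactly the ingredient that makes the two one-sided conditions combine, and it is the step I would write out most carefully.
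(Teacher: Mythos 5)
Your proposal is correct in substance, but it takes a genuinely different route from the paper's proof of Lemma \ref{keylem5}. The paper argues by induction on the number of summands in the unique representation of Lemma \ref{lem5.3}: it isolates a block of lowest $X$-degree, computes $\partial w$ explicitly (the long display (\ref{pw})), uses $\partial w\in\mathcal{A}_{n-1}(Z)$ together with Lemma \ref{lem5.2} to conclude that this block is a cross product of an element of $\Omega_{s}(X)$ with an element of $\Omega_{n-s}(Y)$, then subtracts it and recurses. You avoid induction altogether: you split $w=\sum_i w_i$ along the intrinsic bidegree of staircase paths, extract the two one-sided slice conditions $\Phi_i(e_x)\in\Omega_{n-i}(Y)$ and $\Psi_i(e_y)\in\Omega_i(X)$ from the vanishing of the non-allowed part of $\partial w$ (your coefficient extraction, resting on the fact that distinct pairs have disjoint staircase supports, is exactly right and is the same mechanism the paper uses implicitly), and then merge the two conditions by expanding in a basis of $\Omega_{n-i}(Y)$ and applying an integral left inverse of its coordinate matrix to recover each $P_l$ as an $R$-combination of the $\Psi_i(e_y)$. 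This is cleaner and more structural than the paper's bookkeeping with the families $J^y_i(X)$, $J^x_{n-i}(Y)$, and you correctly identify the one non-formal point, namely that the two one-sided memberships do not automatically combine over an arbitrary commutative ring.

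What the paper's induction buys is that it works entirely over $R$ and never chooses a basis of $\Omega_\ast$; what your route needs, and the paper's does not, is that $\Omega_{n-i}(Y;R)$ is free on a $\mathbb{Z}$-basis and split inside $\mathcal{A}_{n-i}(Y;R)$, i.e.\ the base-change isomorphism $\Omega_\ast(\,\cdot\,;R)\cong R\otimes_{\mathbb{Z}}\Omega_\ast(\,\cdot\,;\mathbb{Z})$. Be aware that this does not follow from freeness of $\mathcal{A}_\ast$ alone: since $\Omega_p$ is the kernel of the composite $\mathcal{A}_p\stackrel{\partial}{\longrightarrow}\mathcal{R}_{p-1}\rightarrow\mathcal{R}_{p-1}/\mathcal{A}_{p-1}$ (not of $\partial$ itself, a small slip in your write-up, though harmless for the Smith normal form argument), commuting this kernel with $-\otimes_{\mathbb{Z}}R$ requires purity of the image of that composite, and in general an $\operatorname{Tor}_1^{\mathbb{Z}}$ term of its cokernel obstructs it. The paper itself invokes the same isomorphism as obvious in the proof of Theorem \ref{thm3.11}, so your argument is admissible by the paper's own standards; but in a careful write-up you should either establish this purity or verify that the elements $\Phi_i(e_x)$ lie in the split image $R\otimes_{\mathbb{Z}}\Omega_{n-i}(Y;\mathbb{Z})$ before expanding them in the chosen basis.
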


\begin{proof}
Let $w$ be an $n$-path in $\Omega_{n}(Z)$, by Lemma \ref{lem5.3} we can write it as a finite sum:
\[
w=\sum_{i=s}^n\Bigg(\sum_{e_{x}\in J_{i}(X),\ e_{y}\in J_{n-i}(Y)}c^{xy}(e_{x}\times e_{y})\Bigg) \tag{5.6}\label{wori}
\]
where $0\neq c^{xy}\in R$, $J_{i}(X)\subseteq P_{i}(X)$, $J_{n-i}(Y)\subseteq P_{n-i}(Y)$ and $s\geq 0$ is the lowest index of $e_x$ appeared
in the expression. We do the proof by induction on the total number $a$ of the summands $c^{xy}(e_{x}\times e_{y})$ in (\ref{wori}).

If $a=1$, that is to say, $w=c^{xy}(e_{x}\times e_{y})\in\Omega_n(Z)$
where $e_{x}\in P_{s}(X)$ and $e_{y}\in P_{n-s}(Y)$. $\partial(w)=c^{xy}\big(\partial(e_{x})\times e_{y}+(-1)^se_x\times \partial(e_{y})\big)\in
\mathcal{A}_n(Z)$ and Lemma \ref{lem5.2} imply that $\partial(e_{x})\in\mathcal{A}_{s-1}(X)$ and $\partial(e_{y})\in\mathcal{A}_{n-s-1}(Y)$.
Let $k=s$, then $p_s=c^{xy}e_x$ and $q_{n-s}=e_y$ give the desired result.

Now suppose that for any $a<b$ $(b>1)$ the result holds, we shall show
that the result also holds for $a=b$. Write $w$ as in (\ref{wori}), suppose
the total number of the summands is $b$. We rewrite $w$ as follows:
\begin{align*}
w&=\sum_{i=s}^n\Bigg(\sum_{e_{y}\in P_{n-i}(Y)}\bigg(\sum_{e_{x}\in J^y_{i}(X)}c^{xy}e_{x}\bigg)\times e_{y}\Bigg)\\
&=\sum_{i=s}^n\sum_{J^y_{i}(X),J^x_{n-i}(Y)}\Bigg(\bigg(\sum_{e_{x}\in J^y_{i}(X)}c^{xy}e_{x}\bigg)\times\bigg(\sum_{e_{\tilde{y}}\in J^x_{n-i}(Y)}e_{\tilde{y}}\bigg)\Bigg)\tag{5.7}\label{w2}\\
&=\sum_{i=s}^n\sum_{J^y_{i}(X),J^x_{n-i}(Y)}\Bigg(\sum_{e_{x}\in J^y_{i}(X)}\bigg(c^{xy}e_{x}\times\sum_{e_{\tilde{y}}\in J^x_{n-i}(Y)}e_{\tilde{y}}\bigg)\Bigg)
\end{align*}
where each set $J^y_{i}(X)\subseteq P_{i}(X)$ is decided by $e_y$, and
each set $J^x_{n-i}(Y)\subseteq P_{n-i}(Y)$ contains $e_y$ and is decided by $J^y_{i}(X)$.
Note that $\bigcap J^y_{i}(X)=\bigcap J^x_{n-i}(Y)=\emptyset$, $\bigcup
J^y_{i}(X)=J_{i}(X)$ and $\bigcup J^x_{n-i}(Y)=J_{n-i}(Y)$. It follows from
$w\in\Omega_{n}(Z)$ and (\ref{w2}) that each sum
\[
\sum_{e_{x}\in
J^y_{i}(X)}c^{xy}e_{x}\in\mathcal{A}_{i}(X)\ \ \   \mbox{and}\ \  \sum_{e_{\tilde{y}}\in
J^x_{n-i}(Y)}e_{\tilde{y}}\in\mathcal{A}_{n-i}(Y). \tag{5.8}\label{cons}
\]
For the sake of simplicity, given any elementary $n$-path $e_{v}=e_{0\cdots n}\in P_{n}(V)$
and integer $0\leq l\leq n$, we denote $e_{v(\widehat{l})}=e_{0\cdots
(l-1)\widehat{l}(l+1)\cdots n}$. On the other hand by (\ref{wori}) we can compute as follows:

\begin{align*}
\partial(w)&=\sum_{e_{y}\in P_{n-s}(Y)}\bigg(\sum_{e_{x}\in J^y_{s}(X)}c^{xy}\partial(e_{x})\bigg)\times e_{y}\\
&\quad+\sum_{J^y_{s}(X),J^x_{n-s}(Y)}\Bigg(\sum_{e_{x}\in J^y_{s}(X)}\bigg(e_{x}\times(-1)^sc^{xy}\sum_{e_{\tilde{y}}\in J^x_{n-s}(Y)}\partial(e_{\tilde{y}})\bigg)\Bigg)\\
\end{align*}
\begin{align*}
\quad\quad\quad\quad\quad\quad\quad&\quad+\sum_{J^{y^{\prime}}_{s+1}(X),J^{x^\prime}_{n-s-1}(Y)}\Bigg(\sum_{e_{x^\prime}\in J^{y^{\prime}}_{s+1}(X)}\bigg(\partial(e_{x^\prime})\times c^{x^\prime y^{\prime}}\sum_{e_{\tilde{y}^{\prime}}\in J^{x^\prime}_{n-s-1}(Y)}e_{\tilde{y}^{\prime}}\bigg)\Bigg)+P(Z)\\
&=\sum_{e_{y}\in P_{n-s}(Y)}\bigg(\sum_{e_{x}\in J^y_{s}(X)}c^{xy}\partial(e_{x})\bigg)\times e_{y}\tag{5.9}\label{pw}\\
&\quad+\sum_{J^y_{s}(X),J^x_{n-s}(Y)}\Bigg(\sum_{e_{x}\in J^y_{s}(X)}\bigg(e_{x}\times\bigg((-1)^sc^{xy}\sum_{e_{\tilde{y}}\in J^x_{n-s}(Y)}\partial(e_{\tilde{y}})\\
&\quad\quad+(-1)^lc^{x^\prime y^{\prime}}\sum_{\stackrel{e_{x^\prime(\widehat{l})}=e_x}{e_{\tilde{y}^{\prime}}\in J^{x^\prime}_{n-s-1}(Y)}}e_{\tilde{y}^{\prime}}\bigg)\bigg)\Bigg)\\
&\quad+\sum_{J^{y^{\prime}}_{s+1}(X),J^{x^\prime}_{n-s-1}(Y)}\Bigg(\sum_{\stackrel{e_{x^\prime(\widehat{l^\prime})}\notin\{e_x\}}{e_{x^\prime}\in J^{y^{\prime}}_{s+1}(X)}}\bigg((-1)^{l^\prime}e_{x^\prime(\widehat{l^\prime})}\times c^{x^\prime y^{\prime}}\sum_{e_{\tilde{y}^{\prime}}\in J^{x^\prime}_{n-s-1}(Y)}e_{\tilde{y}^{\prime}}\bigg)\Bigg)+P(Z)
\end{align*}
where $P(Z)$ denotes the rest summand of the form $\sum_{e_x\in
P_{r}(X),e_y\in P_{t}(Y)}c^{xy}e_x\times e_y$ with $s<r\leq n-1$ and
$t=n-1-r$. Thus it follows from $\partial(w)\in P_{n-1}(Z)$, (\ref{pw}) and
Lemma \ref{lem5.2} that
$$\sum_{e_{x}\in J^y_{s}(X)}c^{xy}\partial(e_{x})\in\mathcal{A}_{s-1}(X)$$
and
$$(-1)^sc^{xy}\sum_{e_{\tilde{y}}\in J^x_{n-s}(Y)}\partial(e_{\tilde{y}})
+(-1)^lc^{x^\prime y^{\prime}}\sum_{\stackrel{e_{x^\prime(\widehat{l})}=e_x}{e_{\tilde{y}^{\prime}}\in J^{x^\prime}_{n-s-1}(Y)}}e_{\tilde{y}^{\prime}}\in\mathcal{A}_{n-s-1}(Y),$$
or equivalently
$$\sum_{e_{\tilde{y}}\in J^x_{n-s}(Y)}\partial(e_{\tilde{y}})\in\mathcal{A}_{n-s-1}(Y).$$
Namely, by (\ref{cons}) and Lemma \ref{lem5.1} one has
\[
P=\bigg(\sum_{e_{x}\in
J^y_{s}(X)}c^{xy}e_{x}\bigg)\times\bigg(\sum_{e_{\tilde{y}}\in
J^x_{n-s}(Y)}e_{\tilde{y}}\bigg)\in\Omega_{s}(X)\times\Omega_{n-s}(Y)\subseteq\Omega_n(Z).
\]
Now if $w=P$ then we are finished. Otherwise one can rewrite
$w=P+Q$ where $Q$ denotes the remaining summands in
(\ref{w2}). Note that $w\in\Omega_n(Z)$ and $P\in\Omega_n(Z)$ imply
$Q\in\Omega_n(Z)$. Also one observes that both of the total
numbers of summands $c^{xy}e_x\times e_y$ consisting of $P$ and $Q$ are
less than $b$. Thus by inductive hypothesis one has the desired result.
\end{proof}

By Definition \ref{admiss} and Lemma \ref{keylem5} one immediately has the following result.
\begin{cor}\label{cor5.5}
All the paths $e_{x}\in P(X)$ and $e_{y}\in P(Y)$ in Lemma \ref{lem5.3} are admissible.
\end{cor}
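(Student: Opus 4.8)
The plan is to play off the two descriptions of an arbitrary $w\in\Omega_n(Z)$ against each other, exploiting the uniqueness built into Lemma \ref{lem5.3}. Fix such a $w$ and write its unique expansion $w=\sum_{e_x\in P(X),\,e_y\in P(Y)}c^{xy}(e_x\times e_y)$ as in Lemma \ref{lem5.3}. By ``the paths appearing in Lemma \ref{lem5.3}'' we mean those $e_x$ (resp. $e_y$) for which some coefficient $c^{xy}$ is nonzero, and the goal is to show that each of them is admissible for $P(X)$ (resp. $P(Y)$).

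First I would invoke Lemma \ref{keylem5} to rewrite the same $w$ as $w=\sum_k\sum_{i\le n}\big(p_i^k(X)\times q_{n-i}^k(Y)\big)$ with $p_i^k(X)\in\Omega_i(X)$ and $q_{n-i}^k(Y)\in\Omega_{n-i}(Y)$. Since $\Omega_i(X)\subseteq\mathcal{A}_i(X)$, each $p_i^k(X)$ is an allowed path, so it expands in the basis of elementary paths as $p_i^k(X)=\sum_a r^{k,i}_a\,e_x^{a}$ with every $e_x^a\in P_i(X)$ and every coefficient $r^{k,i}_a\neq 0$; by Definition \ref{admiss} each such $e_x^a$ is then admissible for $P(X)$. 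The analogous expansion $q_{n-i}^k(Y)=\sum_b s^{k,i}_b\,e_y^{b}$ produces only admissible $e_y^b$ for $P(Y)$.

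Next I would expand the cross products bilinearly and collect terms in the basis $\{e_x\times e_y\}$, obtaining $w=\sum_{e_x,e_y}d^{xy}(e_x\times e_y)$ with $d^{xy}=\sum_{(k,i,a,b):\,e_x^a=e_x,\,e_y^b=e_y}r^{k,i}_a s^{k,i}_b$. Because the cross products $\{e_x\times e_y\}$ are linearly independent and the coefficients in Lemma \ref{lem5.3} are uniquely determined by $w$, the two expansions must agree coefficientwise, that is, $d^{xy}=c^{xy}$ for all $x,y$.

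Finally, suppose $e_x$ appears in the Lemma \ref{lem5.3} representation, so $c^{xy}\neq 0$ for some $y$. Then $d^{xy}\neq 0$, whence at least one summand $r^{k,i}_a s^{k,i}_b$ of $d^{xy}$ is nonzero; for that index the two factors must both be nonzero, in particular $r^{k,i}_a\neq 0$ with $e_x^a=e_x$. Thus $e_x$ occurs with nonzero coefficient in the allowed path $p_i^k(X)$ and is therefore admissible, and the same argument applied to $s^{k,i}_b$ shows $e_y$ is admissible. I do not anticipate a genuine obstacle here—the content is entirely carried by Lemmas \ref{lem5.3} and \ref{keylem5}—but the one step to handle with care is the passage ``$d^{xy}\neq 0\Rightarrow$ some $r^{k,i}_a s^{k,i}_b\neq 0\Rightarrow r^{k,i}_a\neq 0$,'' which is where admissibility is actually extracted and which relies only on the elementary fact that a vanishing factor forces a vanishing product.
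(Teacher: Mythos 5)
Your proposal is correct and takes essentially the same route as the paper, whose entire proof is the one sentence that the corollary follows immediately from Definition \ref{admiss} and Lemma \ref{keylem5}; your expansion of the $p_i^k(X)\times q_{n-i}^k(Y)$ terms and the coefficient comparison using the uniqueness and linear independence in Lemma \ref{lem5.3} is exactly the implicit step made explicit. The one point you flag, $d^{xy}\neq 0\Rightarrow r^{k,i}_a s^{k,i}_b\neq 0\Rightarrow r^{k,i}_a\neq 0$, indeed holds over an arbitrary ring, so there is no gap.
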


This corollary allows us to define an $R$-linear map
$$F^{Z}_{S}: \Omega_{k}(Z)\rightarrow C_{k}(S(X)\times S(Y); R)$$
by the formula
$$F^{Z}_{S}(e_{x}\times e_{y})=\phi_{e_x}\times\phi_{e_y}$$
where $e_{x}$ and $e_{y}$ are admissible elementary paths in $P_{l}(X)$
and $P_{k-l}(Y)$, respectively, and the symbol `$\times$' means different
cross products on the two sides of the equation.

\begin{lem}\label{lem5.6}
The map $F^Z_{S}: \Omega_{k}(Z)\rightarrow C_{k}(S(X)\times S(Y); R)$ is injective.
\end{lem}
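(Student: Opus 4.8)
The plan is to reduce the injectivity of $F^Z_S$ to that of the simplicial cross-product map $G$ from Lemma \ref{lem5.1}, using the unique representation of paths in $\Omega_k(Z)$ furnished by Lemma \ref{lem5.3}. First I would take any $w\in\Omega_k(Z)$ with $F^Z_S(w)=0$ and, by Lemma \ref{lem5.3}, write it uniquely as $w=\sum_{e_x,e_y}c^{xy}(e_x\times e_y)$ with $c^{xy}\in R$, where the cross products $\{e_x\times e_y\}$ are linearly independent. By Corollary \ref{cor5.5} every factor $e_x\in P(X)$ and $e_y\in P(Y)$ appearing here is admissible, so each $\phi_{e_x}$ and $\phi_{e_y}$ is an honest cell of $S(X)$, respectively $S(Y)$. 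Applying the defining formula $F^Z_S(e_x\times e_y)=\phi_{e_x}\times\phi_{e_y}$ together with $R$-linearity, the hypothesis becomes $\sum_{e_x,e_y}c^{xy}(\phi_{e_x}\times\phi_{e_y})=0$ in $C_k(S(X)\times S(Y);R)$.

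Next I would recognize this left-hand side as the image under $G$ of the element $\sum_{e_x,e_y}c^{xy}(\phi_{e_x}\otimes\phi_{e_y})$, because by construction $G(\phi_{e_x}\otimes\phi_{e_y})=\phi_{e_x}\times\phi_{e_y}$ is exactly formula (\ref{dtimp}). Since $G$ is injective by Lemma \ref{lem5.1}, this forces
\[
\sum_{e_x,e_y}c^{xy}(\phi_{e_x}\otimes\phi_{e_y})=0 \quad\text{in}\quad C_\ast(S(X);R)\otimes_R C_\ast(S(Y);R).
\]
Finally, the cells $\{\phi_{e_x}\}$ attached to distinct admissible elementary paths are distinct generators of the free $R$-module $C_\ast(S(X);R)$---here I invoke the injectivity of $F_\Delta$ from Lemma \ref{lem3.8}, which guarantees $\phi_{e_x}\neq\phi_{e_x'}$ whenever $e_x\neq e_x'$---and likewise for the $\{\phi_{e_y}\}$. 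Consequently $\{\phi_{e_x}\otimes\phi_{e_y}\}$ is a linearly independent family in the tensor product, so all $c^{xy}=0$ and hence $w=0$, which is the assertion.

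The only delicate point is the bookkeeping that turns the hypothesis into a statement about a basis of the tensor product: one must be sure that the elementary factors of $w$ are admissible and that the associated simplicial cells $\phi_{e_x},\phi_{e_y}$ are pairwise distinct generators, so that $\{\phi_{e_x}\otimes\phi_{e_y}\}$ is linearly independent. This is precisely where Corollary \ref{cor5.5} and Lemma \ref{lem3.8} are used; once these are in place the rest is a formal diagram chase through the injective map $G$, and no further computation with the step-like paths $\sigma$ is needed.
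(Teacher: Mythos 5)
Your proof is correct, but it is organized differently from the paper's. The paper does not factor $F^Z_S$ through $G$; instead it expands $F^Z_S(w)=\sum c^{xy}\bigl(\sum_{\delta}(-1)^{|\delta|}(\phi_{e_x}\times\phi_{e_y})l_{\delta}\bigr)$ and re-runs the geometric argument of Lemma \ref{lem5.1} inline (``as in the proof of Lemma \ref{lem5.1}''): since the characteristic maps are homeomorphisms on interiors and the paths $e_x$, $e_y$ are pairwise distinct, the subdivision simplices $(\phi_{e_x}\times\phi_{e_y})l_{\delta}$ have distinct images, forcing all $c^{xy}=0$. You instead treat Lemma \ref{lem5.1} as a black box: writing $F^Z_S(w)=G\bigl(\sum c^{xy}\,\phi_{e_x}\otimes\phi_{e_y}\bigr)$, injectivity of $G$ reduces everything to linear independence of $\{\phi_{e_x}\otimes\phi_{e_y}\}$ in $\bigoplus_{m+n=k}C_m(S(X);R)\otimes_R C_n(S(Y);R)$, which holds because the chain groups are free on the cells, $R$ is commutative (standing hypothesis of Section 5), and distinct admissible paths give distinct cells --- your citation of Lemma \ref{lem3.8} is exactly the right justification for that last point, and Corollary \ref{cor5.5} correctly guarantees that all factors are admissible so that $\phi_{e_x}$, $\phi_{e_y}$ are honest cells of $S(X)$, $S(Y)$. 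Your factorization buys modularity: no renewed bookkeeping with the step-like paths $\sigma$, and your intermediate independence claim is precisely the injectivity of $F_S\otimes F_S$ that the paper proves separately inside Theorem \ref{mainthm1}, so your argument in effect anticipates the composite $F_{\Omega}=G\circ(F_S\otimes F_S)$ used there and reorganizes the same material more cleanly. What the paper's inline version buys is self-containedness and an explicit geometric reason (distinct interiors of the $\Delta^{m+n}_{\sigma}$) at the point of use; substantively the two proofs rest on the same facts, and yours has no gap.
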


\begin{proof}
Let $m+n=k$ and suppose that all the elementary paths in this proof are admissible, we need to show that for any non-zero $(m+n)$-path
$w\in\Omega_{m+n}(Z)$ one always has $F^Z_{S}(w)\neq0$. By Lemma \ref{keylem5} one can write
\begin{align*}
w=\sum^{m+n=k}_{e_{x}\in P_m(X),\ e_{y}\in P_n(Y)}c^{xy}(e_{x}\times e_{y}).
\end{align*}
We claim that $F^Z_{S}(w)=0$ implies all coefficients $c^{xy}=0$. In fact, replace $f^j_m$
and $g^j_n$ by $\phi_{e_{x}}$ and $\phi_{e_{y}}$, respectively, as in the
proof of Lemma \ref{lem5.1} one has
$$F^Z_{S}(w)=\sum_{e_{\alpha}\in P_m(X),\ e_{\beta}\in P_n(Y)}^{m+n=k}c^{xy}\bigg(\sum_{\delta}(-1)^{|\delta|}(\phi_{e_{x}}\times\phi_{e_{y}})l_{\delta}\bigg).$$
Note that all $l_\delta(\Delta^{m+n}_{\sigma})$ are different since all $e_x$ and $e_y$
are different from each other. Thus each map $(\phi_{e_{x}}\times\phi_{e_{y}})l_{\delta}$
as the characteristic map $\Delta^{m+n}_{\sigma}\rightarrow S(X)\times S(Y)$
has different images from each other since a characteristic map is a
homeomorphism on the interior of some standard simplex. That is to say,
$F^Z_{S}(w)=0$ if and only if all coefficients $c^{xy}=0$, as asserted.
\end{proof}

\begin{prop}\label{prop5.7}
The chain complex $\Omega_{\ast}(Z)$ can be viewed as a subcomplex of the
simplicial chain complex $C_{\ast}(S(X)\times S(Y); R)$ via the map $F^Z_{S}$.
\end{prop}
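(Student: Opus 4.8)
The plan is to upgrade the injective $R$-linear map $F^Z_S$ of Lemma \ref{lem5.6} to an injective \emph{chain} map. Once $F^Z_S$ is shown to commute with the boundary operators, its image is automatically closed under $\partial$ (indeed $\partial F^Z_S(\Omega_n(Z))=F^Z_S(\partial\,\Omega_n(Z))\subseteq F^Z_S(\Omega_{n-1}(Z))$), so $\Omega_\ast(Z)$ is identified with a subcomplex of $C_\ast(S(X)\times S(Y); R)$. Thus the only thing left to verify is the chain-map identity $\partial\circ F^Z_S=F^Z_S\circ\partial$ on $\Omega_n(Z)$ for every $n$.

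First I would reduce to a single cross product. By Lemma \ref{keylem5} every $w\in\Omega_n(Z)$ is a finite sum of terms $p\times q$ with $p\in\Omega_i(X)$ and $q\in\Omega_{n-i}(Y)$, and by Corollary \ref{cor5.5} each elementary path appearing in such $p$ and $q$ is admissible. Since both $F^Z_S$ and $\partial$ are $R$-linear, it suffices to establish the identity on one summand $p\times q$. Here I would use bilinearity of both cross products to rewrite $F^Z_S(p\times q)=F_\Delta(p)\times F_\Delta(q)$, where $F_\Delta$ is the linear extension of the map from Section 3 applied on each factor separately.

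Now I would compute the two sides and compare. Applying the simplicial boundary formula (\ref{ptimp}) gives
\[
\partial\big(F^Z_S(p\times q)\big)=\partial\big(F_\Delta(p)\big)\times F_\Delta(q)+(-1)^i F_\Delta(p)\times\partial\big(F_\Delta(q)\big),
\]
while the boundary formula for the cross product of paths together with $R$-linearity of $F^Z_S$ gives
\[
F^Z_S\big(\partial(p\times q)\big)=F_\Delta(\partial p)\times F_\Delta(q)+(-1)^i F_\Delta(p)\times F_\Delta(\partial q).
\]
The two right-hand sides agree once $F_\Delta(\partial p)=\partial(F_\Delta p)$ and $F_\Delta(\partial q)=\partial(F_\Delta q)$, and these are precisely the content of Lemma \ref{lem3.7}. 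That lemma applies summand-by-summand: each elementary path in $p$ and in $q$ is admissible by Corollary \ref{cor5.5}, and since $p\in\Omega_i(X)$ and $q\in\Omega_{n-i}(Y)$ their boundaries again lie in $\Omega_\ast$, so they too consist of admissible paths; hence Lemma \ref{lem3.7} extends linearly to each factor.

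The main obstacle is bookkeeping rather than conceptual: one must make sure that the two notions of cross product (on paths, and on characteristic maps) are matched correctly by $F^Z_S$ with the same sign convention, and that the admissibility hypothesis of Lemma \ref{lem3.7} is genuinely available for the boundary terms $\partial p$, $\partial q$ and not merely for $p$, $q$ themselves. Granting this, the injectivity of Lemma \ref{lem5.6} together with the chain-map property just established yields the desired embedding of $\Omega_\ast(Z)$ as a subcomplex of $C_\ast(S(X)\times S(Y); R)$.
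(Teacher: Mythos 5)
Your proposal is correct and follows essentially the same route as the paper: injectivity is quoted from Lemma \ref{lem5.6}, and the chain-map identity is verified by matching the boundary formula for cross products of paths against the simplicial formula (\ref{ptimp}) via the compatibility $F_\Delta\partial=\partial F_\Delta$ of Lemma \ref{lem3.7}. The only (welcome) difference is cosmetic: the paper computes directly on the elementary decomposition of Lemma \ref{lem5.3}, whereas you first reduce to $\Omega$-level factors $p\times q$ via Lemma \ref{keylem5} and Corollary \ref{cor5.5}, which makes the admissibility of the boundary summands explicit rather than implicit.
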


\begin{proof}
By Lemma \ref{lem5.6} it suffices to verify that the map $F^Z_{S}$ is a chain map.
Let $$w=\sum_{e_{x}\in P_{i}(X),\ e_{y}\in P_{n-i}(Y)}c^{xy}(e_{x}\times e_{y})$$
be an $n$-path in $\Omega_{n}(Z)$. One computes that
\begin{align*}
F_S^Z\big(\partial(w)\big)&=F_S^Z\bigg(\sum_{e_{x}\in P_{i}(X),\ e_{y}\in P_{n-i}(Y)}c^{xy}\big(\partial(e_{x})\times e_{y}+(-1)^ie_x\times\partial(e_y)\big)\bigg)\\
&=\sum_{e_{x}\in P_{i}(X),\ e_{y}\in P_{n-i}(Y)}c^{xy}\big(\phi_{\partial(e_{x})}\times \phi_{e_{y}}+(-1)^i\phi_{e_x}\times\phi_{\partial(e_y)}\big)\\
&=\sum_{e_{x}\in P_{i}(X),\ e_{y}\in P_{n-i}(Y)}c^{xy}\partial(\phi_{e_x}\times\phi_{e_y})\\
&=\partial\big(F_S^Z(w)\big)
\end{align*}
where the third equality follows by (\ref{ptimp}). It then finishes the proof.
\end{proof}

We are ready to prove the first main result in this section:
\begin{mytheo}\label{mainthm1}
Let $P(X)$ and $P(Y)$ be two regular path complexes and $R$ a commutative ring. Then for
their Cartesian product $P(Z)=P(X)\boxplus P(Y)$ the following isomorphism of
chain complexes holds:
\begin{align*}
\Omega_{\ast}(X)\otimes_{R}\Omega_{\ast}(Y)\cong \Omega_{\ast}(Z) \tag{5.10}\label{iso5.10}
\end{align*}
whose mapping is given by $u\otimes v \mapsto u\times v$.
\end{mytheo}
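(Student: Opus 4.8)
The plan is to prove that the cross-product map
$$\Phi:\ \Omega_{\ast}(X)\otimes_{R}\Omega_{\ast}(Y)\longrightarrow\Omega_{\ast}(Z),\qquad u\otimes v\longmapsto u\times v,$$
is an isomorphism of chain complexes. First I would check it is well defined: the cross product is $R$-bilinear, so it factors through the tensor product, and Lemma \ref{lem5.2} guarantees $u\times v\in\Omega_{k}(Z)$ whenever $u\in\Omega_{s}(X)$ and $v\in\Omega_{k-s}(Y)$. That $\Phi$ is a chain map is immediate from the boundary formula $\partial(u\times v)=\partial(u)\times v+(-1)^{|u|}u\times\partial(v)$ (\cite[Proposition 7.2]{Grig1}), which is exactly the Leibniz rule defining the differential on the tensor-product complex. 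Surjectivity is then handed to us by Lemma \ref{keylem5}: every $w\in\Omega_{n}(Z)$ can be written as a finite sum $\sum_{k}\sum_{i}p_{i}^{k}(X)\times q_{n-i}^{k}(Y)$ with $p_{i}^{k}(X)\in\Omega_{i}(X)$ and $q_{n-i}^{k}(Y)\in\Omega_{n-i}(Y)$, so that $w=\Phi\big(\sum_{k}\sum_{i}p_{i}^{k}(X)\otimes q_{n-i}^{k}(Y)\big)$.

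The substance of the proof is injectivity, which I would extract from a commutative square built out of the realization maps of Section 3 together with the simplicial cross product $G$ of Lemma \ref{lem5.1}. By Corollary \ref{cor5.5} every elementary path occurring in a representative is admissible, so for admissible $e_{x}$, $e_{y}$ the definition of $F^{Z}_{S}$ and of the simplicial cross product give $F^{Z}_{S}(e_{x}\times e_{y})=\phi_{e_{x}}\times\phi_{e_{y}}=G\big(F_{\Delta}(e_{x})\otimes F_{\Delta}(e_{y})\big)$; extending linearly yields the identity $F^{Z}_{S}\circ\Phi=G\circ(F_{\Delta}\otimes F_{\Delta})$ as maps $\Omega_{\ast}(X)\otimes_{R}\Omega_{\ast}(Y)\to C_{\ast}(S(X)\times S(Y);R)$. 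Since $F^{Z}_{S}$ is injective (Lemma \ref{lem5.6}) and $G$ is injective (Lemma \ref{lem5.1}), any $\xi$ with $\Phi(\xi)=0$ satisfies $G\big((F_{\Delta}\otimes F_{\Delta})(\xi)\big)=0$ and hence $(F_{\Delta}\otimes F_{\Delta})(\xi)=0$. Thus injectivity of $\Phi$ is reduced to injectivity of $F_{\Delta}\otimes F_{\Delta}$.

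The hard part will be precisely this last reduction, since it concerns tensor products of $R$-modules rather than individual injective maps. Here I would use that, by Definition \ref{sp}, the simplicial chain module $C_{n}(S(P);R)$ is free on exactly the cells coming from admissible $n$-paths, and that $F_{\Delta}$ (injective by Lemma \ref{lem3.8}) identifies $\Omega_{n}(P)$ with the corresponding submodule of this free module. When $\Omega_{\ast}(X)$ and $\Omega_{\ast}(Y)$ are free --- in particular over any field or, as needed for the ensuing K\"{u}nneth formula, over a principal ideal domain --- one chooses $R$-bases, expands each basis vector into admissible elementary paths, and invokes the linear independence of the cross products $\{e_{x}\times e_{y}\}$ from Lemma \ref{lem5.3} to conclude that the images of distinct basis tensors are $R$-linearly independent; hence $F_{\Delta}\otimes F_{\Delta}$ is injective and $\Phi$ is an isomorphism. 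Together with Proposition \ref{prop5.7}, which already realizes $\Omega_{\ast}(Z)$ as a subcomplex via $F^{Z}_{S}$, this gives the isomorphism of chain complexes (\ref{iso5.10}). For a general commutative ring the same conclusion holds provided the inclusions $\Omega_{n}\hookrightarrow\mathcal{R}_{n}$ stay injective after tensoring, i.e. that the relevant $\Omega_{n}$ are flat; settling this purely module-theoretic point is the one place where care beyond the formal diagram chase is genuinely needed.
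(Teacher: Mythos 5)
Your proposal follows the paper's own architecture almost step for step: the paper likewise forms the triangle
$\Omega_{\ast}(X)\otimes_{R}\Omega_{\ast}(Y)\xrightarrow{F_{\Omega}}C_{\ast}(S(X)\times S(Y);R)\xleftarrow{F^{Z}_{S}}\Omega_{\ast}(Z)$
with $F_{\Omega}=G\circ(F_{S}\otimes F_{S})$ (your $F_{\Delta}\otimes F_{\Delta}$), obtains surjectivity, i.e. the equality $\mathrm{Im}\,F_{\Omega}=\mathrm{Im}\,F^{Z}_{S}$, from Lemma \ref{keylem5} together with Lemma \ref{lem5.2}, and then defines the isomorphism as $(F^{Z}_{S})^{-1}\circ F_{\Omega}$ using the injectivity of $G$ (Lemma \ref{lem5.1}), of $F^{Z}_{S}$ (Lemma \ref{lem5.6}), and of $F_{S}\otimes F_{S}$. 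Your well-definedness, chain-map, surjectivity and reduction-of-injectivity steps therefore coincide with the paper's proof in substance.

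The one point where you stop short --- injectivity of $F_{\Delta}\otimes F_{\Delta}$ over an arbitrary commutative ring --- is indeed a genuine issue, and you are right that it is a flatness/purity question rather than a formal diagram chase. But you should be aware that the paper does not resolve it either: its injectivity argument for $F_{S}\otimes F_{S}$ only tests \emph{elementary} tensors $u\otimes v$ (an element of the kernel of a map defined on a tensor product need not be elementary), and even for those it expands $u\otimes v=\sum k^{x}k^{y}(e_{x}\otimes e_{y})$ and reads off coefficients against the basis $\{\phi_{e_{x}}\otimes\phi_{e_{y}}\}$ of $C_{m}(X;R)\otimes_{R}C_{n}(Y;R)$ --- an expansion that presupposes the injectivity of $\Omega_{m}(X)\otimes_{R}\Omega_{n}(Y)\rightarrow C_{m}(X;R)\otimes_{R}C_{n}(Y;R)$, which is precisely what is to be proved; for submodules of free modules over a general $R$ this map can have a kernel (coming from $\mathrm{Tor}_{1}^{R}$ of the relevant quotients). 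Your resolution in the free case is correct --- over a PID each $\Omega_{n}\subseteq\mathcal{A}_{n}$ is free, hence flat, and your basis argument goes through --- and it suffices for everything downstream, since the K\"{u}nneth formula of Corollary \ref{cor5.9} assumes $R$ is a PID anyway. For a general commutative $R$ the gap would be closed by the base-change claim $\Omega_{p}(P;R)\cong R\otimes_{\mathbb{Z}}\Omega_{p}(P;\mathbb{Z})$ invoked in the proof of Theorem \ref{thm3.11}, which would make every $\Omega_{p}(P;R)$ free; but that claim itself requires a Tor-vanishing argument that neither you nor the paper supplies. In short: same route as the paper, and the hole you honestly flagged is real --- it is the paper's hole too.
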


\begin{proof}
Let us first inspect the map $$F_S\otimes F_S: \Omega_{\ast}(X)\otimes_{R}\Omega_{\ast}(Y)\rightarrow C_{\ast}(X; R)\otimes_{R} C_{\ast}(Y; R)$$
defined by the formula
$$(F_S\otimes F_S)(u\otimes v)=F_S(u)\otimes F_S(v)$$
for any $u\in\Omega_{m}(X)$ and $v\in\Omega_{n}(Y)$. By Lemma \ref{lem3.7}, as in the proof of Proposition \ref{prop5.7}, on can easily verify that
$F_S\otimes F_S$ is a chain map. Furthermore we shall show that
$F_S\otimes F_S$ is injective. Suppose $u=\sum_{e_x\in P_{m}(X)}k^xe_x\in\Omega_{m}(X)$, $v=\sum_{e_y\in P_{n}(Y)}k^ye_y\in\Omega_{n}(Y)$ and $(F_S\otimes F_S)(u\otimes v)=0$, then
we have
$$u\otimes v=\sum_{e_{x}\in P_{m}(X),\ e_{y}\in P_{n}(Y)}k^{x}k^y(e_{x}\otimes e_{y}).$$
and it follows from
$$(F_S\otimes F_S)(u\otimes v)=\sum_{e_{x}\in P_{m}(X),\ e_{y}\in P_{n}(Y)}k^{x}k^y(\phi_{e_{x}}\otimes\phi_{e_{y}})=0$$
that all coefficients $k^{x}k^y=0$, since all $\phi_{e_{x}}$ and $\phi_{e_{y}}$ lay respectively in the bases of $C_{m}(X; R)$ and
$C_{n}(Y; R)$ gives that all $\phi_{e_{x}}\otimes\phi_{e_{y}}$ lay in the basis of $C_{m}(X; R)\otimes C_{n}(Y; R)$. It follows that $u\otimes v=0$, as desired.

Now we obtain a diagram
\[
\Omega_{\ast}(X)\otimes_{R}\Omega_{\ast}(Y)\stackrel{F_{\Omega}}{\longrightarrow}C_{\ast}(S(X)\times S(Y); R)\stackrel{F_S^Z}{\longleftarrow}\Omega_{\ast}(Z)
\]
where $F_{\Omega}=G\circ(F_S\otimes F_S)$ and $G$ is given by Lemma \ref{lem5.1}, it is obvious that $F_{\Omega}$ is injective. We claim that
Im$F_{\Omega}$=Im$F_S^Z$. Indeed, for any $w\in\Omega_\ast(Z)$, by Lemma \ref{keylem5} one can write $w=\sum_{p_{x}, q_{y}}(p_{x}\times q_{y})$
where $p_x$ goes through a subset $I\subset\Omega_\ast(X)$ and $q_y$ goes through a subset $J\subset\Omega_\ast(Y)$, let $w^\prime=\sum_{p_{x},
q_{y}}(p_{x}\otimes q_{y})$ one immediately has $F_{\Omega}(w^\prime)=F_S^Z(w)$, that is, Im$F_S^Z\subseteq$ Im$F_{\Omega}$. The inverse
inclusion is obvious by Lemma \ref{lem5.2}. Thus the map $(F^Z_S)^{-1}\circ F_{\Omega}$ has meaning and it gives the desired isomorphism
(\ref{iso5.10}) by sending $u\otimes v$ to $u\times v$.
\end{proof}

K\"{u}nneth formula is used to compute the (co)homology of a product space
in terms of the (co)homology of the factors. For path complexes over a field,
a type of K\"{u}nneth formula also holds (see, \cite[Theorem 7.4]{Grig1}).
In fact, for any principle ideal domain $R$, we have the following more general result.

\begin{cor}\label{cor5.9}
Let $P(X)$ and $P(Y)$ be two regular path complexes and $R$ a PID. Then, for each $n$, there holds a K\"{u}nneth formula by the following natural splitting
short exact sequence
$$0\rightarrow\oplus_{i}\big(\mbox{H}_{i}(X)\otimes_{R}\mbox{H}_{n-i}(Y)\big)\rightarrow
\mbox{H}_{n}(Z)\rightarrow \oplus_{i}\mbox{Tor}_1^{R}\big(\mbox{H}_{i}(X), \mbox{H}_{n-i-1}(Y)\big)\rightarrow 0.$$
\end{cor}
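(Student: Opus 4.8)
The plan is to deduce the Künneth formula directly from the chain-level isomorphism furnished by Theorem~\ref{mainthm1}, combined with the classical algebraic Künneth theorem for chain complexes over a principal ideal domain. First I would record that Theorem~\ref{mainthm1} gives an isomorphism of chain complexes $\Omega_{\ast}(X)\otimes_{R}\Omega_{\ast}(Y)\cong\Omega_{\ast}(Z)$, hence for every $n$ an isomorphism on homology
$$\mbox{H}_{n}(Z)\cong \mbox{H}_{n}\big(\Omega_{\ast}(X)\otimes_{R}\Omega_{\ast}(Y)\big).$$
This reduces the problem to computing the homology of a tensor product of two chain complexes, which is exactly the situation addressed by the algebraic Künneth theorem.

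Next I would verify the freeness hypotheses needed to apply that theorem. Since $R$ is a PID, it suffices to note that each $\Omega_{i}(X)$ is free: by definition $\Omega_{i}(X)$ is an $R$-submodule of $\mathcal{A}_{i}(P(X))\subseteq\mathcal{R}_{i}(X)$, and $\mathcal{R}_{i}(X)$ is free on the regular elementary $i$-paths, so a submodule of a free module over a PID is again free. The same argument applies to $\Omega_{j}(Y)$, whence both $\Omega_{\ast}(X)$ and $\Omega_{\ast}(Y)$ are chain complexes of free $R$-modules. In particular their cycle and boundary submodules are free, so all the flatness requirements of the algebraic Künneth theorem are met.

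With these hypotheses in place I would invoke the algebraic Künneth theorem (see, e.g., \cite{Weib} or \cite{Hatc}) for the complexes $\Omega_{\ast}(X)$ and $\Omega_{\ast}(Y)$. It produces, for each $n$, a natural short exact sequence
$$0\rightarrow\oplus_{i}\big(\mbox{H}_{i}(X)\otimes_{R}\mbox{H}_{n-i}(Y)\big)\rightarrow \mbox{H}_{n}\big(\Omega_{\ast}(X)\otimes_{R}\Omega_{\ast}(Y)\big)\rightarrow \oplus_{i}\mbox{Tor}_{1}^{R}\big(\mbox{H}_{i}(X),\mbox{H}_{n-i-1}(Y)\big)\rightarrow 0$$
which splits (the splitting being non-natural), where I have written $\mbox{H}_{\ast}(X)=\mbox{H}_{\ast}(\Omega_{\ast}(X))$ and $\mbox{H}_{\ast}(Y)=\mbox{H}_{\ast}(\Omega_{\ast}(Y))$. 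Substituting the isomorphism $\mbox{H}_{n}(Z)\cong \mbox{H}_{n}(\Omega_{\ast}(X)\otimes_{R}\Omega_{\ast}(Y))$ from the first step into the middle term yields precisely the asserted short exact sequence for $\mbox{H}_{n}(Z)$.

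I expect the only point requiring genuine care to be the verification that the modules $\Omega_{i}(X),\Omega_{j}(Y)$, together with their cycles and boundaries, are free, since the algebraic Künneth theorem is otherwise applied as a black box. Over a PID this is immediate from the fact that submodules of free modules are free, so no real obstacle remains and the corollary follows at once.
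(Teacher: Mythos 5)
Your proposal is correct and follows essentially the same route as the paper: both reduce via Theorem~\ref{mainthm1} to $H_n\big(\Omega_\ast(X)\otimes_R\Omega_\ast(Y)\big)$, observe that each $\Omega_i(X)$ (and $\Omega_j(Y)$) is free because it is a submodule of the finitely generated free module $\mathcal{A}_i$ over a PID, and then apply the algebraic K\"unneth theorem (the paper cites \cite[Theorem 3B.5]{Hatc}) as a black box. Your explicit remark that the splitting is not natural is a small accuracy improvement over the paper's phrasing, but the argument is the same.
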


\begin{proof}
By Theorem \ref{mainthm1} one has
$$\mbox{H}_{n}(Z)=\mbox{H}_{n}\big(\Omega_{\ast}(Z)\big)\cong\mbox{H}_{n}\big(\Omega_{\ast}(X)\otimes_{R}\Omega_{\ast}(Y)\big).$$
Note that each $\mathcal{A}_{n}(X)$ $(n\geq 0)$ is a finitely generated
free $R$-module, thus $\Omega_{n}(X)$ is also a free $R$-module since
$R$ is a PID. Therefore, the result follows from \cite[Theorem 3B.5]{Hatc}.
\end{proof}

\subsection{ The case of join }
\indent

\noindent Comparing with the approach of by considering the Cartesian product, there is another way to derive the K\"{u}nneth formula
via an operation called the \emph{join} (see Definition \ref{def5.10} below) of two regular path complexes. For path complexes over
a field $K$, this is exactly what \cite[Theorem 6.5]{Grig1} says. In fact the general result also holds when one replaces the field $K$ by
any commutative ring $R$. Before we set off to prove this, let us do some preparation.

\begin{defn}[\cite{Grig1},Definition 6.1]\label{def5.10}
\emph{Given two disjoint finite sets $X$, $Y$ and their path complexes $P(X)$ , $P(Y)$, set
$Z=X\times Y$ and define a path complex $P(Z)$ as follows: $P(Z)$ consists of all paths of the form
$uv$ where $u\in P(X)$ and $v\in P(Y)$. The path complex $P(Z)$ is called a \emph{join} of $P(X)$, $P(Y)$
and is denoted by $P(Z)=P(X)\ast P(Y)$.}
\end{defn}

Given any two paths $u\in P_{i-1}(X)$ and $v\in P_{n-i}(Y)$, it is easy to check that, the differential operator acting on the join $uv\in
P_n(Z)$ is give by the following formula:
$$\partial(uv)=\partial(u)v+(-1)^{i}u\partial(v).$$
For more properties and examples of the join of two regular path complexes the reader may refer to \cite{Grig1}. To prove the asserted K\"{u}nneth formula, we proceed
by a parallel way as that of proving Theorem \ref{mainthm1}. The key idea is to simply replace the symbol ``$\times$" of Cartesian product by
the symbol ``$\ast$" of join in the previous proofs, and treat carefully the corresponding dimensions.

In details, suppose we are given two
regular path complexes $P(X)$ and $P(Y)$, denote $P(Z)$ their join. We see that by definition
$\Omega_{s-1}(X)\ast\Omega_{k-s}(Y)\subseteq\Omega_{k}(Z)$ (comparing with Lemma \ref{lem5.2}) and each path $w\in\Omega_n(Z)$ admits a
representation
$$w=\sum_{i=1}^{n}\sum_{e_{x}\in P_{i-1}(X),\ e_{y}\in P_{n-i}(Y)}c^{xy}(e_{x}\ast e_{y})$$
with finitely nonzero coefficients $c^{xy}\in R$, which are uniquely determined by $w$ since obviously $e_{xy}=e_{x}\ast e_{y}$ across all
$e_{x}\in P(X)$ and $e_{y}\in P(Y)$ are $R$-linearly independent (comparing with Lemma \ref{lem5.3}). Now for the chain complex
$\Omega_{\ast}(X)$, we consider a new chain complex $\Omega^{\prime}_{\ast}(X)$ which is defined by the formula
$\Omega^{\prime}_{i}(X)=\Omega_{i-1}(X)$ and $\partial^\prime_i=\partial_{i-1}$. With this trick one is able to prove the following result.

\begin{mytheo}
Let $P(X)$ and $P(Y)$ be two regular path complexes and $R$ a commutative ring. Then for
their join $P(Z)=P(X)\ast P(Y)$ the following isomorphism of chain complexes holds:
\begin{align*}
\Omega_{\ast}(Z)\cong\Omega^{\prime}_{\ast}(X)\otimes_{R}\Omega_{\ast}(Y) \tag{5.11}\label{iso5.11}
\end{align*}
whose mapping is given by $u\otimes v \mapsto u\ast v$.

If furthermore $R$ is a PID. Then there holds a K\"{u}nneth formula by the following natural splitting short
exact sequence
\[
0\rightarrow\oplus_{i}(\mbox{H}_{i-1}(X)\otimes_{R}\mbox{H}_{n-i}(Y))\rightarrow
\mbox{H}_{n}(Z)\rightarrow \oplus_{i}\mbox{Tor}_1^{R}(\mbox{H}_{i-1}(X), \mbox{H}_{n-i-1}(Y))\rightarrow 0 \tag{5.12}\label{iso7}
\]
for each $n$.
\end{mytheo}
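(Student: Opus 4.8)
The plan is to imitate the proof of Theorem~\ref{mainthm1} almost verbatim, systematically replacing the Cartesian product $\times$ by the join $\ast$ and absorbing the degree shift on the $X$-factor into the reindexed complex $\Omega'_\ast(X)$, where $\Omega'_i(X)=\Omega_{i-1}(X)$ and $\partial'_i=\partial_{i-1}$. One geometric feature makes the join case even cleaner than the product case: the join of an ordered $(i-1)$-simplex with an ordered $(n-i)$-simplex is a single ordered $n$-simplex, $\Delta^{i-1}\ast\Delta^{n-i}=\Delta^n$, so no analogue of the step-like subdivision of $\Delta^m\times\Delta^n$ is needed. Accordingly I would first define an $R$-linear map $F^Z_S\colon\Omega_\ast(Z)\to C_\ast(S(X)\ast S(Y);R)$ by $F^Z_S(e_x\ast e_y)=\phi_{e_x}\ast\phi_{e_y}$, the characteristic map of the join simplex; that this is well defined rests on the join analogue of Corollary~\ref{cor5.5}, namely that every $e_x\in P(X)$ and $e_y\in P(Y)$ appearing in a representation of a path of $\Omega_\ast(Z)$ is admissible.

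The heart of the argument, and the step I expect to be the main obstacle, is the join analogue of Lemma~\ref{keylem5}: every $w\in\Omega_n(Z)$ can be written as a finite sum $\sum_k\sum_i\big(p^k_{i-1}(X)\ast q^k_{n-i}(Y)\big)$ with $p^k_{i-1}(X)\in\Omega_{i-1}(X)$ and $q^k_{n-i}(Y)\in\Omega_{n-i}(Y)$. I would prove it by induction on the total number of elementary joins in the unique representation of $w$ furnished by the join version of Lemma~\ref{lem5.3}. In the base case a single summand $c^{xy}(e_x\ast e_y)\in\Omega_n(Z)$ forces, via the boundary formula $\partial(uv)=\partial(u)v+(-1)^iu\partial(v)$ and the inclusion $\Omega_{s-1}(X)\ast\Omega_{k-s}(Y)\subseteq\Omega_k(Z)$, that both $\partial(e_x)$ and $\partial(e_y)$ are allowed, so $e_x\in\Omega_{s-1}(X)$ and $e_y\in\Omega_{n-s}(Y)$. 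The inductive step regroups $w$ by its $Y$-factor and computes $\partial(w)$ exactly as in~(\ref{w2})--(\ref{pw}), splits off a piece $P\in\Omega_{s-1}(X)\ast\Omega_{n-s}(Y)$, notes that $Q=w-P$ again lies in $\Omega_n(Z)$ with strictly fewer summands, and invokes the hypothesis. All the delicacy is in the sign bookkeeping and in the omitted-index conditions of the form $e_{x'(\widehat{l})}=e_x$; the uniform shift in the $X$-degree does not interfere with this.

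Granting the decomposition, injectivity of $F^Z_S$ (the analogue of Lemma~\ref{lem5.6}) is immediate, since distinct admissible joins $e_x\ast e_y$ yield characteristic maps that are homeomorphisms on the interiors of distinct $n$-simplices, so no nonzero $R$-combination maps to $0$. I would then assemble the diagram
\[
\Omega'_\ast(X)\otimes_R\Omega_\ast(Y)\stackrel{F_\Omega}{\longrightarrow}C_\ast(S(X)\ast S(Y);R)\stackrel{F^Z_S}{\longleftarrow}\Omega_\ast(Z),
\]
where $F_\Omega=G\circ(F_S\otimes F_S)$ and $G\colon C_p(S(X))\otimes C_q(S(Y))\to C_{p+q+1}(S(X)\ast S(Y))$ is the simplicial join pairing $\phi\otimes\psi\mapsto\phi\ast\psi$, an injective chain map by the argument of Lemma~\ref{lem5.1}. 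That $u\otimes v\mapsto u\ast v$ is a chain map is precisely the identification of $\partial(u\otimes v)=\partial'u\otimes v+(-1)^iu\otimes\partial v$ with $\partial(u\ast v)$ under the convention $\Omega'_i(X)=\Omega_{i-1}(X)$. The decomposition lemma yields $\mathrm{Im}\,F_\Omega=\mathrm{Im}\,F^Z_S$, whence $(F^Z_S)^{-1}\circ F_\Omega$ is the desired chain isomorphism~(\ref{iso5.11}) sending $u\otimes v$ to $u\ast v$.

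For the second assertion I would note that each $\mathcal{A}_n(X)$ is finitely generated free over $R$, hence so is $\Omega_n(X)$, and therefore $\Omega'_n(X)=\Omega_{n-1}(X)$ is free over the PID $R$. Applying the algebraic K\"unneth theorem \cite[Theorem 3B.5]{Hatc} to $\Omega'_\ast(X)\otimes_R\Omega_\ast(Y)$ and substituting $H_i(\Omega'_\ast(X))=H_{i-1}(X)$ together with~(\ref{iso5.11}) produces the split short exact sequence~(\ref{iso7}): the summands with $p+q=n$ and $p=i$ give the terms $H_{i-1}(X)\otimes_R H_{n-i}(Y)$, while those with $p+q=n-1$ give the terms $\mathrm{Tor}_1^R\big(H_{i-1}(X),H_{n-i-1}(Y)\big)$, completing the proof.
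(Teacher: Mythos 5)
Your proposal is correct, but it takes a genuinely different route from the paper's own proof of this theorem. The paper does \emph{not} redo the geometric machinery of Theorem \ref{mainthm1} for the join: it defines the algebraic map $F\colon \Omega'_{\ast}(X)\otimes_R\Omega_{\ast}(Y)\to\Omega_{\ast}(Z)$, $u\otimes v\mapsto u\ast v$, directly, gets injectivity at once from the $R$-linear independence of the elementary joins $e_{xy}=e_x\ast e_y$ (which, unlike the cross products $e_x\times e_y$ that are signed sums over shuffles, are single elementary paths --- concatenations on the disjoint union --- so distinct pairs $(e_x,e_y)$ give distinct basis paths), and gets surjectivity by observing that the proof of Lemma \ref{keylem5} carries over verbatim with ``$\times$'' replaced by ``$\ast$'' and $P_i(X)$, $\Omega_i(X)$, $J_i(X)$ replaced by their shifted versions. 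You instead replicate the full geometric scaffolding of Theorem \ref{mainthm1}: a realization map $F^Z_S$ into $C_{\ast}(S(X)\ast S(Y);R)$, the join pairing $G$ with the sign $\partial(\phi\ast\psi)=\partial\phi\ast\psi+(-1)^{p+1}\phi\ast\partial\psi$ (your degree bookkeeping via $\Omega'_i(X)=\Omega_{i-1}(X)$ is right), injectivity via distinct interiors, and the image comparison $\mathrm{Im}\,F_{\Omega}=\mathrm{Im}\,F^Z_S$ followed by inverting $F^Z_S$. Both arguments hinge on the same crux --- the join analogue of the decomposition Lemma \ref{keylem5}, which you prove by redoing the induction and the paper obtains by transfer --- so the mathematical content agrees there. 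What your route buys is a geometric interpretation (the join of the realizations receives $\Omega_{\ast}(Z)$), consistent with the spirit of Section 3; what it costs is a set of auxiliary facts the paper never establishes and deliberately avoids needing: that the join of two ordered s$\Delta$-complexes is again an s$\Delta$-complex, that $G$ is a well-defined injective chain map, and the join analogue of Corollary \ref{cor5.5} to make $F^Z_S$ well defined. These are all true (and easier than in the product case, since $\Delta^{i-1}\ast\Delta^{n-i}=\Delta^n$ needs no step-like subdivision), but the paper's direct algebraic argument shows the detour is unnecessary precisely because the join of elementary paths is itself an elementary path. Your treatment of the K\"unneth half matches the paper's exactly: freeness of $\Omega_n(X)$ over a PID, \cite[Theorem 3B.5]{Hatc}, and the reindexing $H_i(\Omega'_{\ast}(X))=H_{i-1}(X)$.
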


\begin{proof}
First note that as in Corollary \ref{cor5.9}, (\ref{iso7}) follows directly from
(\ref{iso5.11}). To obtain the isomorphism (\ref{iso5.11}), let us consider the map
$$F: \Omega^{\prime}_{\ast}(X)\otimes \Omega_{\ast}(Y)\rightarrow \Omega_{\ast}(Z)$$
given by $u\otimes v \mapsto u\ast v$ for any $u\in\Omega^{\prime}_{i}(X)$ and $v\in\Omega_{n-i}(Y)$.
One sees that the basis of  $\Omega^{\prime}_{i}(X)\otimes\Omega_{n-i}(Y)$ consists of all elements
of the form $e_x\otimes e_y$ where $e_x$ and $e_y$ are some elementary paths in $\Omega_{i-1}(X)$ and $\Omega_{n-i}(Y)$.
Apparently $F$ is injective since all $e_{xy}=e_x\ast e_y$ are $R$-linearly
independent.

Now we are done if we can show that the map $F$ is surjective, but this follows directly from the proof of Lemma \ref{keylem5}. To see this one
needs only to replace the symbol ``$\times$" by ``$\ast$", $\Omega_i(X)$ $P_i(X)$ and similarly $J_i(X)$ etc. by $\Omega^\prime_i(X)$,
$P^\prime_{i}(X)$ and $J^\prime_{i}(X)$ etc., respectively, where $P^\prime_{i}(X):=P_{i-1}(X)$ and $J^\prime_{i}(X):=J_{i-1}(X)$. Then the
proof still validates and this gives that $F$ is surjective.
\end{proof}

\begin{rem} If $R=K$ is a field, one immediately obtains \cite[Theorems 6.5 and 7.6]{Grig1} by Theorem \ref{mainthm1}, Corollary
\ref{cor5.9} and Theorem 5.11. But note that the proofs of \cite[Theorems 6.5 and 7.6]{Grig1} used the theory of vector spaces over a field,
which no more works for $R$-modules \textbf{when $R$ is a commutative ring}, so our proofs of Theorems 5.8, 5.11 and the proof of \cite[Theorems 6.5 and
7.6]{Grig1} have the different essentially points.
\end{rem}

{\bf Acknowledgements:}\;
This first author is supported by the National Natural Science Foundation of China(No.12071422) and the Zhejiang Provincial Natural Science Foundation of China (No.LY19A010023), the second author is supported by the Youth
Program of Provincial Natural Science Foundation of Zhejiang (No. LQ20A010008).


\begin{thebibliography}{30}

\bibitem{Cart} H. Cartan, S. Eilenberg, \emph{Homological Algebra}, (Princeton Univ. Press, Princeton, 1956).

\bibitem{Chen} B. Chen, S.-T. Yau,and Y.-N. Yeh, Graph homotopy and Graham homotopy, Discrete Math., 241 (2001) 153-170.

\bibitem{Cheng} S. Y. Cheng, Eigenfunctions and nodal sets, Comment. Math. Helv., 51 (1976), 43-55.

\bibitem{Eile} S. Eilenberg and N. Steenrod, \emph{Foundations of  Algebraic Topology} (Princeton Univ. Press, 1952).

\bibitem{Gers} M. Gerstenhaber, S.D. Schack, Simplicial cohomology is Hochschild cohomology, J. Pure Appl. Algebra 30 (1983), 143-156.

\bibitem{Grig5} A. Grigor'yan, Y. Lin, Y. Muranov, and S.-T. Yau, Cohomology of digraphs and (undirected) graphs.
Asian J. Math. 19 (2015), 887-932.

\bibitem{Grig1} A. Grigor'yan, Y. Lin, Y. Muranov, and S.-T. Yau, Homologies of path complexes and digraphs, preprint, 2013.

\bibitem{Grig2} A. Grigor'yan, Y. Muranov, and S.-T. Yau, Graphs associated with simplicial complexes, Homology Homotopy and Appl. 16 (2014), 295-311.

\bibitem{Grig3} A. Grigor'yan, Y. Muranov, and S.-T. Yau, On a cohomology of digraphs and Hochschild
cohomology, J. Homotopy Relat. Struct. 11 (2016), 209-230.

\bibitem{Hatc} A. Hatcher, \emph{Algebraic topology} (Cambridge University Press, 2002).

\bibitem{Happ} D. Happel, \emph{Hochschild cohomology of finite dimensional algebras,}  (Lecture Notes in Math 1404, Springer-Verlag, 1989, 108-126).

\bibitem{Hoch} G. Hochschild, On the homology groups of an associative algebra, Annals of Math. 46 (1945), 58-67.

\bibitem{Ivan} A. V. Ivashchenko, Contractible transformations do not change the homology groups of graphs, Discrete Math., 126 (1994), 159-170.

    \bibitem{Lin1} Y. Lin, G. Lippner, D. Mangoubi, S.-T. Yau, Nodal geometry of graphs on surfaces, Discrete Contin. Dyn. Syst. 28(3) (2010), 1291-1298.

    \bibitem{Lin2} Y. Lin, G. Lippner, S.-T. Yau, Quantum tunneling on graphs, Commun. Math. Phys., 311(1) (2012), 113-132.

\bibitem{Weib} C. A. Weibel, \emph{An introduction to homological algebra}, (Cambridge Stud. Adv. Math. 38,
Cambridge University Press, Cambridge, 1994).

\end{thebibliography}
\end{document}